\newcommand{\vip}{\vskip0.15cm}
\newcommand{\indiq}{1\!\! 1}
\newcommand{\e}{{\varepsilon}}
\newcommand{\E}{{\mathbb{E}}}
\newcommand{\be}{{\bf e}}
\newcommand{\bv}{{\bf v}}
\newcommand{\bV}{{\bf V}}
\newcommand{\bw}{{\bf w}}
\newcommand{\bW}{{\bf W}}
\newcommand{\bY}{{\bf Y}}
\newcommand{\abs}[1]{\lvert#1\rvert}
\newcommand{\cH}{{{\mathcal H}}}
\newcommand{\cW}{{{\mathcal W}}}
\newcommand{\cL}{{{\mathcal L}}}
\newcommand{\cA}{{{\mathcal A}}}
\newcommand{\cP}{{{\mathcal P}}}
\newcommand{\rr}{{\mathbb{R}}}
\newcommand{\Sp}{{\mathbb{S}}}
\newcommand{\rd}{{\mathbb{R}^3}}
\newcommand{\intot}{\int_0^t}
\newcommand{\intrd}{\int_{\rd}}
\newcommand{\tv}{{\tilde v}}
\newcommand{\tx}{{\tilde x}}
\newcommand{\tc}{{\tilde c}}
\newcommand{\tB}{{\tilde B}}
\newcommand{\tW}{{\tilde W}}
\newcommand{\tM}{{\tilde M}}
\newcommand{\sm}{{s-}}
\newcommand{\beqn}{\begin{equation}}
\newcommand{\eeqn}{\end{equation}}
\newcommand{\bear}{\begin{eqnarray}}
\newcommand{\eear}{\end{eqnarray}}
\newcommand{\bean}{\begin{eqnarray*}}
\newcommand{\eean}{\end{eqnarray*}}
\def\beq{\begin{equation}}\def\eeq{\end{equation}}
\def\benu{\begin{enumerate}} \def\eenu{\end{enumerate}}
\newenvironment{preuve}{\vip\noindent {\it Proof}}{\hfill$\square$\vip}
\newtheorem{theo}{\indent Theorem}[section]
\newtheorem{prop}[theo]{\indent Proposition}
\newtheorem{rem}[theo]{\indent Remark}
\newtheorem{lem}[theo]{\indent Lemma}
\newtheorem{defin}[theo]{\indent Definition}
\begin{document}

\title[The Kac particle system]
{Rate of convergence of the Kac particle system for the Boltzmann equation with hard potentials}

\author{Chenguang Liu}
\address{Delft Institute of Applied Mathematics, EEMCS, TU Delft, 2628 Delft, The Netherlands}
\email{C.Liu-13@tudelft.nl}

\author{Liping Xu}
\address{Beihang University, School of Mathematical Sciences, No 37 XueYuan Road, Haidian, Beijing }
\email{xuliping.p6@gmail.com }

\author{An Zhang}
\address{Beihang University, School of Mathematical Sciences, No 37 XueYuan Road, Haidian, Beijing }
\email{anzhang@pku.edu.cn }

\subjclass[2010]{80C40, 60K35}

\keywords{Kinetic theory, Stochastic particle systems, Propagation of Chaos, Wasserstein distance.}

\begin{abstract} 
In this paper, we prove that the Kac stochastic particle system converges to the weak solution of  the spatially homogeneous Boltzmann equation for hard potentials and hard spheres.  We give, under the initial data with finite exponential moment assumption, an explicit rate of propagation of chaos in squared Wasserstein distance with quadratic cost by using a double coupling technique.
\end{abstract}

\maketitle

\section{Introduction and main results} 
\setcounter{equation}{0}

\subsection{The Boltzmann equation}
We consider a 3-dimensional spatially homogeneous  Boltzmann equation, which depicts the density $f(t,v)$ of particles in a dilute gas, moving with velocity $v\in\rd$ at time $t\geq 0$. The density $f_t(v)$ satisfies
\begin{eqnarray} \label{be}
\partial_t f_t(v) = \frac 1 2\intrd dv_* \int_{\Sp^{2}} d\sigma B(|v-v_*|,\theta)
\big[f_t(v')f_t(v'_*) -f_t(v)f_t(v_*)\big],
\end{eqnarray}
where 
\begin{equation}\label{vprimeetc}
v'=v'(v,v_*,\sigma)=\frac{v+v_*}{2} + \frac{|v-v_*|}{2}\sigma, \quad 
v'_*=v'_*(v,v_*,\sigma)=\frac{v+v_*}{2} -\frac{|v-v_*|}{2}\sigma
\end{equation}
and $\theta$ is the \emph{deviation angle} given by
$\cos \theta = \frac{(v-v_*)}{|v-v_*|} \cdot \sigma$. Due to Galilean invariance,  
 the {\em collision kernel} is assumed to be $B(|v-v_*|,\theta)\geq 0$ depending on the type of interaction between the particles and giving the rate at which the pair of particle collide. It is determined by both $|v-v_*|$ and the cosine of the deviation angle $\theta$. See Cercignani \cite{MR1313028},
Desvillettes \cite{MR1881103}, Villani \cite{MR1942465} and Alexandre \cite{MR2556715} for physical explanations and mathematical reviews on this equation. It's well known that the equation is given under five assumptions, thus the conservation of mass, momentum and kinetic energy hold for reasonable solutions and we may assume without loss of generality that $\int_{\rd} f_0(v) dv=1$.

\subsection{Assumptions}
We will consider the classical physical example of collision kernel which is given by inverse power laws interactions: when particles interact by pairs due to a repulsive force proportional to $1/r^s$ for some $s>2$, the following assumption \eqref{cs} holds with $\gamma=(s-5)/(s-1)$ and $\nu=2/(s-1)$.  In this case, the collision kernel can be computed implicitly, that is,  there is a measurable function $\beta:(0,\pi]\rightarrow\rr_+$ such that
\beq\label{cs}
 B(|v-v_*|,\theta)\sin\theta={|v-v_*|}^{\gamma}\beta(\theta),\ \hbox{and}\  \forall~\theta\in[\pi/2,\pi],~\beta(\theta)=0,
\eeq
and either 
\begin{equation}\label{c2hs}
\forall \theta\in(0,\pi/2),  \; \beta(\theta)=1
\end{equation}
or
\begin{equation}\label{c2}
\exists \; \nu\in (0,1), \; \exists \; 0<c_0<c_1,\; \forall \theta\in(0,\pi/2),  \;
c_0 \theta^{-1-\nu} \leq \beta(\theta) \leq c_1 \theta^{-1-\nu}.
\end{equation}
In addition, we will also assume that 
\begin{equation}\label{c4}
\beta(\theta)=b(\cos\theta)\quad \hbox{with $b$ non-decreasing, convex and $C^1$ on $[0,1)$}.
\end{equation}
This  additional condition is required since the exponential moments are considered in the sequel.
The assumption $\beta=0$ on $[\pi/2,\pi]$  is not a restriction and can be obtained by  symmetry as noted in the introduction of \cite{MR1765272}. Here we will focus on the case of hard potentials and hard spheres, i.e. $\gamma\in(0,1]$.

\subsection{Some notations}
Let  us denote by $\cP(\rd)$  the set of all probability measures on $\rd$.  When $f\in\cP(\rd)$ has a density, we also denote this density  by $f$.
For $q>0$, we denote
\[\cP_q(\rd)=\{f\in\cP(\rd): m_q(f)<\infty\} \quad\text{with}~~m_q(f):=\int_{\rd}|v|^q f(dv).\]
We now introduce, for $\theta\in(0,\pi/2)$ and $z\in[0,\infty)$,

\beq\label{defH}
H(\theta)=\int_\theta^{\pi/2}\beta(x)dx\quad\text{and}\quad G(z)=H^{-1}(z).
\eeq
Under \eqref{c2}, it is clear that $H$ is a continuous decreasing function valued in  $[0,\infty)$, so  it has an inverse function $G:[0,\infty)\mapsto(0,\pi/2)$ defined by $G(H(\theta))=\theta$  and $H(G(z))=z$. Furthermore, it is easy to verify  that there exist some constants $0<c_2<c_3$ such that for all $z>0$,
\beq\label{eG}
c_2(1+z)^{-1/\nu}\le G(z)\le c_3(1+z)^{-1/\nu},
\eeq
and we know from \cite[Lemma 1.1]{MR2398952} that there exists a constant $c_4>0$ such that for all $x,y\in\rr_+$,
\beq\label{c3}
\int_0^{\infty}(G(z/x)-G(z/y))^2dz\le c_4\frac{(x-y)^2}{x+y}.
\eeq
Under \eqref{c2hs}, it's clear that $G(z)=\max{(\pi/2-z,0)}$, and a direct computation shows that $G(z)$ satisfying \eqref{c3} as well.
\vip
Let us now introduce the Wasserstein distance with quadratic cost on $\cP_2(\rd)$.
For $g,\tilde{g}\in\cP_2(\rd)$, let $\cH(g,\tilde{g})$ be the set of probability measures on $\rd\times\rd$ with first marginal $g$ and second marginal $\tilde{g}$. We then set
\[ \cW_2(g,\tilde{g}) =  \inf\left\{\Big(\int_{\rd\times\rd}|v-\tilde{v}|^2R(dv,d\tilde{v})\Big)^{1/2},~~R\in\cH(g,\tilde{g})\right\}. \]
For more details on this distance, one can see \cite[Chapter 2]{MR1964483}.

\subsection{Weak solutions}

We  now introduce a suitable spherical parameterization of \eqref{vprimeetc} as in \cite{MR1885616}.
For each $X\in \rd$, we consider  vectors $I(X),J(X)\in\rd$ such that
$(\frac{X}{|X|},\frac{I(X)}{|X|},\frac{J(X)}{|X|})$ 
is a direct orthonormal basis of $\rd$. 
Then for $X,v,v_*\in \rd$, for $\theta \in (0,\pi/2)$ and $\varphi\in[0,2\pi)$,
we set
\begin{equation}\label{dfvprime}
\left\{
\begin{array}{l}
\Gamma(X,\varphi):=(\cos \varphi) I(X) + (\sin \varphi)J(X), \\\\
a(v,v_*,\theta,\varphi):= - \displaystyle\frac{1-\cos\theta}{2} (v-v_*)
+ \frac{\sin\theta}{2}\Gamma(v-v_*,\varphi),\\
v'(v,v_*,\theta,\varphi):=v+a(v,v_*,\theta,\varphi),
\end{array}
\right.
\end{equation}
then we  write $\sigma\in \Sp^2$ as $\sigma=\frac{v-v_*}{|v-v_*|}\cos\theta
+ \frac{I(v-v_*)}{|v-v_*|}\sin\theta \cos\varphi+\frac{J(v-v_*)}{|v-v_*|}\sin\theta
\sin\varphi,$ and  observe at once that $\Gamma(X,\varphi)$ is orthogonal to $X$ and has the same norm as $X$,
from which it is easy to check that
\beq\label{num}
|a(v,v_*,\theta,\varphi)|=\sqrt{\frac{1-\cos\theta}{2}}|v-v_*|.
\eeq

Let us give the typical definition of  weak solutions, i.e. measure-valued solutions, to \eqref{be}.

\begin{defin}\label{dfsol}
Assume \eqref{cs} and \eqref{c2hs} or \eqref{c2}.
A family $(f_t)_{t\geq 0} \in C([0,\infty),\cP_2(\rd))$ is called a weak solution to \eqref{be} if it satisfies 
\begin{itemize}
\item For all $t\geq 0,$
\begin{equation}\label{cons}
 \intrd v f_t(dv)= \intrd v f_0(dv) \quad\hbox{and}\quad
\quad \intrd |v|^2 f_t(dv)= \intrd |v|^2 f_0(dv).
\end{equation}
\item For any $\phi:\rd\mapsto \rr$  bounded and Lipschitz-continuous, any $t\in [0,T]$,
\begin{equation}\label{wbe}
\intrd \phi(v)\, f_t(dv) =  \intrd \phi(v)\, f_0(dv)
+\intot \intrd \intrd \cA\phi(v,v_*) f_s(dv_*)   f_s(dv) ds
\end{equation}
where
\begin{equation}\label{afini}
\cA\phi (v,v_*) = |v-v_*|^\gamma \, \int_0^{\pi/2}
\beta(\theta)d\theta 
\int_0^{2\pi} d\varphi
\left[\phi(v+a(v,v_*,\theta,\varphi))-\phi(v) \right].
\end{equation}
\end{itemize}
\end{defin}

It is easy to get $|\cA\phi(v,v_*)|\leq C_\phi |v-v_*|^{1+\gamma} \leq C_\phi  (1+|v-v_*|^2)$ from that $|a(v,v_*,\theta,\varphi)| \leq C  \theta |v-v_*|$ and that $\int_0^{\pi/2}
\theta \beta(\theta)d\theta<\infty$,
so that \eqref{wbe} is well-defined.
\vip

Once we have stated the precise definition of weak solutions in the former paragraph, we now provide the  known well-posedness results for the Boltzmann equation, as well as some properties  of solutions that we will need. 

\begin{theo}\label{wp}
Assume \eqref{cs}, \eqref{c4}  and \eqref{c2hs} or \eqref{c2}. Let $f_0\in \cP_2(\rd)$.  For $\gamma\in (0,1]$, we assume additionally 
\begin{equation}\label{c5}
\exists \; p_0\in(\gamma,2),\;\;
\intrd e^{|v|^{p_0}}f_0(dv)<\infty.
\end{equation}
There is a unique weak solution $(f_t)_{t\geq 0}\in C([0,\infty),\cP_2(\rd))$ to \eqref{be} such that
\begin{equation}\label{momex}
\forall \;q\in(0,p_0), \quad \sup_{[0,\infty)}\intrd e^{|v|^q}f_t(dv)<\infty.
\end{equation}
Under \eqref{c2} and if $f_0$ is not a Dirac mass, then $f_t$ has a density for all $t>0$. Under \eqref{c2hs} and if $f_0$ has a density, then $f_t$ has a density for all $t>0$.
\end{theo}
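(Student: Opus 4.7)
The plan is to treat the four assertions (existence, uniqueness, exponential moment propagation, absolute continuity) separately, using a now-classical toolkit adapted to the parametrization \eqref{dfvprime} and the control \eqref{eG}--\eqref{c3} on grazing collisions. I would first establish existence of a weak solution by a compactness/limiting argument: construct approximate solutions either from the Kac particle system (the main object of the paper) or from a cut-off collision kernel, then pass to the limit in \eqref{wbe} using the uniform moment bound $\sup_{[0,T]} m_2(f_t) \le m_2(f_0)$ implied by \eqref{cons} together with the pointwise bound $|\cA\phi(v,v_*)|\le C_\phi(1+|v-v_*|^2)$ stated after \eqref{afini}.

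For the exponential moment bound \eqref{momex}, I would test the weak formulation \eqref{wbe} against $\phi_q(v)=e^{|v|^q}$ for $q\in(0,p_0)$ (via a truncation $\phi_q^N$ to stay inside bounded Lipschitz functions and letting $N\to\infty$). Using \eqref{num} together with the symmetrized angular integration, and crucially the monotonicity/convexity assumption \eqref{c4}, one derives a Povzner-type inequality of the form
\[
\int_0^{\pi/2}\!\!\beta(\theta)d\theta\int_0^{2\pi}\!\!d\varphi\,\bigl[\phi_q(v')+\phi_q(v'_*)-\phi_q(v)-\phi_q(v_*)\bigr] \le -K\,\phi_q(v)\phi_q(v_*)\,\mathbf{1}_{|v|,|v_*|\ge R} + \text{bounded remainder},
\]
after which the negative dissipation term absorbs the positive growth coming from $|v-v_*|^\gamma$ (here $\gamma\le 1$, so $|v-v_*|^{1+\gamma}\le C(1+|v|^2)(1+|v_*|^2)$). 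Gronwall then yields \eqref{momex}; this step is where the additional hypothesis \eqref{c4} and the Bobylev-Mouhot machinery are decisive.

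Uniqueness would follow from a Wasserstein $\cW_2$-stability estimate in the spirit of Fournier--Mouhot. Given two weak solutions $(f_t),(\tilde f_t)$ with the same initial datum, pick an optimal plan $R_t\in\cH(f_t,\tilde f_t)$ and evaluate $\frac{d}{dt}\cW_2^2(f_t,\tilde f_t)$ using a coupled jump representation: both solutions see the \emph{same} Poisson noise, but the deviation angle is sampled as $G(z/|v-v_*|^\gamma)$ resp.\ $G(z/|\tilde v-\tilde v_*|^\gamma)$ on a common $z\in\rr_+$. Then \eqref{c3} gives
\[
\int_0^\infty \bigl(G(z/x)-G(z/y)\bigr)^2 dz \le c_4\,\frac{(x-y)^2}{x+y},
\]
which, combined with the moment bound \eqref{momex} and the Lipschitz estimate $||v-v_*|^{\gamma/2}-|\tilde v-\tilde v_*|^{\gamma/2}|\le C(|v-\tilde v|+|v_*-\tilde v_*|)$, produces a Gronwall inequality $\frac{d}{dt}\cW_2^2(f_t,\tilde f_t)\le C_t\,\cW_2^2(f_t,\tilde f_t)$.

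Finally, for absolute continuity: under \eqref{c2} the infinite mass of grazing collisions forces a regularizing effect, and one recovers a density via a stochastic-calculus argument (Malliavin or a Fournier-type ``many-jumps'' comparison with a Gaussian convolution), provided $f_0$ is not a Dirac mass so that the randomness of the pre-collisional state is non-degenerate. Under \eqref{c2hs} (true cut-off), no regularization occurs, but existence of a density is simply propagated by the Duhamel representation of the gain term against the semigroup of the loss term. The main obstacle is the exponential moment step: balancing $|v-v_*|^\gamma e^{|v|^q}$ against the angular dissipation requires the precise convexity structure \eqref{c4}, and this is what restricts $q$ to the open interval $(0,p_0)$ rather than allowing $q=p_0$.
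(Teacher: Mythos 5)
The paper does not prove Theorem~\ref{wp}: it treats the well-posedness, exponential moment propagation, and regularity as known results and cites, e.g., \cite{MR2511651,MR2525118} for hard potentials, \cite{MR1478067,MR2096050,MR2871802,MR4315665} for exponential moments, and \cite{MR3313757,MR1697562} for existence of a density. Your overall outline (compactness for existence, a Povzner-type estimate and Gr\"onwall for exponential moments, a coupled $\cW_2$-stability argument via \eqref{c3} for uniqueness, and grazing-collision regularization resp.\ Duhamel for absolute continuity) is structurally the route taken in those references. However, two concrete steps as you wrote them would not work.

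First, the ``Povzner-type inequality'' you state for $\phi_q(v)=e^{|v|^q}$, namely
\[
\int_0^{\pi/2}\beta(\theta)\,d\theta\int_0^{2\pi}d\varphi\,\bigl[\phi_q(v')+\phi_q(v'_*)-\phi_q(v)-\phi_q(v_*)\bigr]
\le -K\,\phi_q(v)\,\phi_q(v_*)\,\indiq_{|v|,|v_*|\ge R}+\text{bdd},
\]
is false and far too strong. The left-hand side is, at best, of size comparable to $\phi_q(v)+\phi_q(v_*)$ (and under \eqref{c2} the angular integral of $\beta$ is not even finite, so one must first exploit the vanishing of the integrand in $\theta$); a dissipation of size $\phi_q(v)\phi_q(v_*)=e^{|v|^q+|v_*|^q}$ simply cannot appear, since the loss term only contributes $-\phi_q(v)-\phi_q(v_*)$. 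The correct estimate is of Povzner type for power moments, as in Lemma~\ref{Povzner}: for $\phi(v)=|v|^p$ one gets a strict $-A_p|v-v_*|^\gamma(|v|^p+|v_*|^p)$ term that, because $\gamma>0$, produces superlinear decay $-A_p m_{p+\gamma}$. The Bobylev/Mouhot exponential-moment argument then sums these power-moment inequalities with controlled combinatorial constants (this is precisely where \eqref{c4} enters), rather than testing directly against an exponential as you propose.

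Second, your uniqueness sketch invokes a ``Lipschitz estimate'' $\bigl||v-v_*|^{\gamma/2}-|\tilde v-\tilde v_*|^{\gamma/2}\bigr|\le C(|v-\tilde v|+|v_*-\tilde v_*|)$, which is false for $\gamma<2$ (the map $x\mapsto x^{\gamma/2}$ is only $\gamma/2$-H\"older, not Lipschitz, near the origin). The correct device, used in the paper's own computations (see the proof of Lemma~\ref{further}\,(ii)), is the elementary bound $|x^\gamma-y^\gamma|\le 2|x-y|/(x^{1-\gamma}+y^{1-\gamma})$, whose compensating factor is then absorbed using moment bounds on the solutions. Without this, the Gr\"onwall inequality you want for $\cW_2^2(f_t,\tilde f_t)$ does not close; and even with it, the literature actually works in a weighted (or modified) Wasserstein metric to handle the unbounded factor $|v-v_*|^\gamma$ rather than plain $\cW_2$.
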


The well-posedness for hard potentials can be  found  in \cite{MR2511651,MR2525118}  and    for hard spheres, for instance see \cite{MR339666,MR1697562,MR1716814,MR2871802,MR2728840}. There are substantial literature on the propagation of exponential moments for hard potentials and hard spheres. The seminal work was given by Bobylev \cite{MR1478067} for the case of short ranged interactions which was later significantly improved in \cite{MR2096050,MR2511651,MR2871802}, and recently Fournier \cite{MR4315665} obtained a stronger result than what happens under the cutoff case, i.e. $\int_0^{\pi/2}\beta(\theta)d\theta<\infty$.
Finally, the existence of a density for $f_t$ has been proved in
\cite{MR3313757} (under \eqref{c2} and when $f_0$ is not a Dirac mass and belongs to $\cP_4(\rd)$),
in \cite{MR1697562} (under \eqref{c2hs} when $f_0$ has a density) and  e.g. in  \cite{MR339665} which is classical by using the monotonicity of the entropy under the assumption that  $f_0$ has a finite entropy. By the way,  the global existence, uniqueness, and large-time behavior for solutions were established  for the Vlasov-Poisson-Boltzmann /Landau system in \cite{MR4688693} and  the propagation of the exponential moments was derived for  the inhomogeneous Boltzmann equation with soft potentials, see \cite{MR4704643}.

\subsection{The particle system}
Let us now  recall  the Kac particle system introduced by Kac in \cite{MR84985} to justify the spatially homogeneous Boltzmann equation. It is
the $(\rd)^N$-valued Markov process with infinitesimal generator $\cL_N$ defined as follows: for any bounded Lipschitz test function
$\phi:(\rd)^N\mapsto \rr$ sufficiently regular and $\bv=(v_1,\dots,v_N) \in (\rd)^N$, by
\begin{align}\label{kacg}
\cL_N \phi(\bv)= \frac 1 {2(N-1)} \sum_{i \ne j} \int_{\Sp^2} 
[\phi(\bv + (v'(v_i,v_j,\sigma)-v_i)\be_i
&+(v'_*(v_i,v_j,\sigma)-v_j)\be_j) 
- \phi(\bv)]\\
 &\times B(|v_i-v_j|,\theta)d\sigma\notag,
\end{align}
where   $v\be_i=(0,\dots,0,v,0,\dots,0)\in(\rd)^N$ with $v$ at the $i$-th place for $v\in\rr^3$.
\vip

Let's briefly interpret  the intrinsic interaction between the particles in  the system. Roughly speaking, we consider $N$ particles with velocities ${\bv}=(v_1,...,v_N)\in(\rd)^N$ in the system. Any pair of particles with velocities $(v_i,v_j)$ interact in a way of interactions described  by the Boltzmann equation, i.e. interacting with deviation angle $\theta$
at rate $B(|v_i-v_j|, \theta)/2(N-1)$ for each $\sigma\in\Sp^2$. Then they change their velocities simultaneously  from $v_i$ to
$v'(v_i,v_j,\sigma)$ and $v_j$ changes to $v'_*(v_i,v_j,\sigma)$ given by \eqref{vprimeetc}. In this paper, we consider the difficult non-angular cutoff collision kernel, i.e. the function $\beta(\theta)$ has a non-integrable singularity when $\theta$ is close to $0$. In this case, the  particles jump infinitely many times with a very small deviation angle.

\subsection{Main result}

The  following is our main result relating to the Kac particle system.

\begin{theo}\label{mr} 
Let $B$ be a collision kernel satisfying \eqref{cs}, \eqref{c4} and \eqref{c2hs} or \eqref{c2}. Let  $f_0\in\cP_2(\rd)$ not be a Dirac mass and satisfying additionally  \eqref{c5}.
Consider the unique weak solution $(f_t)_{t\geq 0}$ to \eqref{be} defined in Theorem \ref{wp}
and, for each $N\geq 1$, 
the unique Markov process $(V^{i}_t)_{i=1,\dots,N,t\geq 0}$  defined in Proposition \ref{kpst}.
Let $\mu^{N}_t:= N^{-1}\sum_{i=1}^N \delta_{V^{i}_t}$. Then
\vip
(i) Hard potentials. Assume that $\gamma\in(0,1)$ and \eqref{c2}.   For all $\e\in(0,1/2)$, all $T\geq0$, there is 
$C_{\e,T}>0$ such that for all $N\geq 1$, 
\begin{align}\label{fc4}
\sup_{[0,T]}\E[\cW_2^2(\mu^{N}_t,f_t)] \leq& C_{\e,T} (N^{-1/3} )^{1-2\e}. 
\end{align}
\vip
(ii) Hard spheres. Assume that $\gamma=1$,  \eqref{c2hs} and that $f_0$ has a density.
 For all $\e\in(0,\nu/4)$, all $T\geq0$, there is 
$C_{\e,T}>0$ such that for all $N\geq 1$,
\begin{align}\label{fc6}
\sup_{[0,T]}\E[\cW_2^2(\mu^{N}_t,f_t)] \leq C_{\e ,T}(N^{-1/3})^{1-4\e/\nu}.
\end{align}
\end{theo}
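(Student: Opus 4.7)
The plan is a coupling-plus-concentration argument of Tanaka type. On a common probability space I would build, besides the Kac system $(V^i_t)$, an i.i.d.\ family of nonlinear Boltzmann processes $(W^i_t)_{i=1,\dots,N}$ with one-dimensional marginals $f_t$, each solving a jump SDE whose jump generator integrates to the right-hand side of \eqref{wbe}. Writing $\bar\mu^N_t := N^{-1}\sum_{i=1}^N \delta_{W^i_t}$, the triangle inequality gives
\begin{equation*}
\E[\cW_2^2(\mu^{N}_t,f_t)] \le 2\,\E\Big[\frac{1}{N}\sum_{i=1}^N |V^i_t-W^i_t|^2\Big] + 2\,\E[\cW_2^2(\bar\mu^N_t,f_t)].
\end{equation*}
The second term is the empirical-measure concentration of an i.i.d.\ sample, for which the Fournier--Guillin rate yields $C_T N^{-1/3}$ in dimension three as soon as sufficiently many polynomial moments are available; this is supplied by the propagation of exponential moments \eqref{momex}. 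The heart of the proof is therefore to bound the coupling term at the same rate, up to a small $\eps$-loss.

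To bound the coupling term I would use a \emph{double coupling} based on the spherical parameterization \eqref{dfvprime} together with the function $G$ of \eqref{defH}. Each jump of $V^i$ is driven by a Poisson random measure on $\{1,\dots,N\}\times[0,\infty)\times[0,2\pi)$: a point $(j,z,\varphi)$ fires a jump of $V^i$ by $a(V^i_\sm,V^j_\sm,\theta,\varphi)$ with $\theta=G(z/|V^i_\sm-V^j_\sm|^\gamma)$, in a way consistent with \eqref{kacg}. Each $W^i$ is driven by its own Poisson measure with jumps $a(W^i_\sm,\tv,\theta,\varphi)$ with $\tv$ distributed according to $f_\sm$ and $\theta=G(z/|W^i_\sm-\tv|^\gamma)$. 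The \emph{first} layer of coupling selects, at each jump attempt of $V^i$, the partner pair $(V^j_\sm,\tv)$ by an optimal $\cW_2$-coupling between the empirical law $(N-1)^{-1}\sum_{k\ne i}\delta_{V^k_\sm}$ and $f_\sm$; the \emph{second} layer fires the corresponding jumps of both $V^i$ and $W^i$ using the same $(z,\varphi)$, so that the squared difference of post-collision increments is controlled by the key estimate \eqref{c3}.

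Writing It\^o's formula for $|V^i_t-W^i_t|^2$, compensating the Poisson measures and averaging over $i$, I expect an inequality of the form
\begin{equation*}
\E\Big[\frac{1}{N}\sum_{i=1}^N |V^i_t-W^i_t|^2\Big] \le C\intot \big(1+m_q(f_s)\big)\,\E\Big[\frac{1}{N}\sum_{i=1}^N |V^i_s-W^i_s|^2\Big]\,ds + C\intot \E[\cW_2^2(\bar\mu^N_s,f_s)]\,ds,
\end{equation*}
where the $|v-v_*|^\gamma$ factor inherent in the rate is absorbed, via a Young-type splitting, by a moment $m_q(f_s)$ with $q=q(\eps)$ large, at the cost of a constant that depends on $q$. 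The propagation of exponential moments \eqref{momex} bounds $m_q(f_s)$ uniformly on $[0,T]$ for every $q\in(0,p_0)$, and a Gronwall argument combined with the $N^{-1/3}$ bound on the second term yields \eqref{fc4} and \eqref{fc6}. The exponents $1-2\eps$ (hard potentials) and $1-4\eps/\nu$ (hard spheres) then arise as the outcome of the H\"older trade-off between the $q$-dependent constant and the concentration rate, the different values of $\nu$ entering \eqref{c2} versus \eqref{c2hs} producing the different losses.

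The main obstacle is the non-integrable singularity of $\beta$ at $0$ under \eqref{c2}: the Kac particles undergo infinitely many small-angle jumps, and no pathwise coupling can match them individually. The remedy is the $G$-parameterization together with \eqref{c3}, which makes the $z$-integral of the squared increment finite after compensation. Executing this requires decomposing $a(v,v_*,\theta,\varphi)-a(\tv,\tv_*,\theta,\varphi)$ into a radial piece (governed by $G(z/|v-v_*|^\gamma)-G(z/|\tv-\tv_*|^\gamma)$ and handled by \eqref{c3}) and an orientation piece (governed by the rotation between the frames of $v-v_*$ and $\tv-\tv_*$ appearing in $\Gamma$); controlling the orientation piece via convexity of $b$ on $[0,1)$ is precisely the reason \eqref{c4} is imposed. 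A secondary technical point is the polynomial-in-$q$ growth of the constants, which must be carefully tracked so that the optimization with the $N^{-1/3}$ concentration rate yields an explicit algebraic loss rather than a logarithmic blow-up.
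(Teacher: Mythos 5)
Your proposal has two genuine gaps that would prevent it from working for the Kac (binary) system.

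First, you propose to build an i.i.d.\ family of Boltzmann processes $(W^i_t)$, pathwise-coupled with the Kac system so that $V^i$ and $W^i$ ``fire the corresponding jumps\dots using the same $(z,\varphi)$.'' This is impossible: the Kac system has binary collisions, so $V^i$ and $V^j$ share a single Poisson random measure $M_{ij}=M_{ji}$ (\eqref{ps}). Any $W^i$ that jumps at the same $(z,\varphi)$-atoms as $V^i$ must be driven by the same $M_{ij}$, hence $W^i$ and $W^j$ cannot be independent. The i.i.d.\ construction you describe is exactly what makes the Nanbu (one-sided) system tractable as in \cite{MR3456347}; it fails here. The paper addresses this with what it calls the \emph{second coupling}: it first builds a family $(W^{i,K}_t)$ coupled with the Kac system but \emph{not} independent, and then in Lemma \ref{thele} constructs a separate i.i.d.\ family $(\tW^{i,K}_t)$ and proves $\E[|W^{i,K}_t-\tW^{i,K}_t|^2]\le C_{\e,T}(k/N)^{1-\e}$ when one only decouples $k\le N$ of the particles. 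Balancing this decoupling error against the Fournier--Guillin concentration $Ck^{-1/2}$ of the $k$-sized i.i.d.\ subfamily (not $N^{-1/3}$ as you state; that theorem gives $N^{-1/2}$ in dimension $3$) and choosing $k\sim N^{2/3}$ is precisely where the $N^{-1/3}$ in \eqref{fc4}--\eqref{fc6} comes from. Since your proof sketch skips this step entirely, it cannot produce the stated rate.

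Second, you do not address the cutoff issue. The non-cutoff nonlinear SDE \eqref{tbp} only admits a weak solution, which is not enough to carry out the pathwise coupling you describe (the driving Poisson measure must be specified explicitly and shared with the particle system). The paper circumvents this by working with the \emph{cutoff} Boltzmann processes $W^{i,K}$, whose SDE has finite intensity and hence strong well-posedness (Remark \ref{cutbp}, Lemma \ref{coupling1}), at the cost of an extra error $\cW_2^2(f_t^K,f_t)\lesssim K^{1-2/\nu}$ from Remark \ref{cutws}. The tail contribution $I_3^K$ in Lemma \ref{fundest} comes from this truncation and must be traded off against $K$ at the end, which is why the final exponents in \eqref{fc4} and \eqref{fc6} differ; your sketch does not produce this structure. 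A smaller point: you attribute \eqref{c4} to controlling the orientation (rotational) part of the coupled jump, but the orientation part is handled by Tanaka's angle-shift trick through $\varphi_0$; \eqref{c4} is imposed only to guarantee the propagation of exponential moments in Theorem \ref{wp}.
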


\subsection{Known results, approaches  and comments.}
We will first give a brief but not exhaustive overview of the known works on the propagation of chaos for the Boltzmann equation. The propagation of chaos for the Boltzmann  equation  is  pioneered by Kac in \cite{MR84985}   without convergence rate, then by \cite{MR334788, MR224348} for bounded collision kernel and \cite{MR753814} for unbounded collision kernel in hard spheres case without rate as well. Graham and M\'{e}l\'{e}ard \cite{MR1428502}, using Tanaka's coupling method, obtained one of the first quantitative results for the Boltzmann equation  with cutoff for  Maxwell molecules. Afterwards,  important progresses have been made  in the remarkable work of Mischler and Mouhot \cite{MR3069113}, where strong uniformly in time results but not sharp  were obtained relying on a purely abstract analytic method. In \cite{MR3069113}, the rate of chaos is given by  $N^{-1/(6+\delta)}$ for any $\delta>0$ for  Maxwell molecules, and the rate is  of $(\log N)^{-r}$ for some $r>0$ for the hard sphere case. Then this  rate for Maxwell case was  greatly improved  by Cortez and Fontbona \cite{MR3769742}   to an almost {\it optimal} rate of $N^{-1/3}$. Recently, Heydecker \cite{MR4419606} gave a rate of $(\log N)^{1-1/\nu}$ with  $\nu\in(0,1)$ for Maxwell, hard potentials and hard spheres under a weaker initial condition, i.e. the initial data has a finite $p$-th order moments for some large explicitable $p$.  

\vip

In this paper, we also consider the propagation of chaos for the non-cutoff hard potentials and  hard spheres but in a different way from that in \cite{MR4419606}, where, to our knowledge, the only result for these cases is given. Let's now give a  concise comparison of our work with \cite{MR4419606}. First of all,  both of the work used the Tanaka's trick \cite{MR512334} and followed the idea of  \cite{MR3456347}, but we used the the Wasserstein distance $\cW_{2}$ while the latter adopted  an optimal cost that is equivalent to the Wasserstein distance $\cW_{p+2}$. This difference inevitably leads to  the different investigation of estimation.  Secondly, we considered the different initial condition. Heydecker \cite{MR4419606} obtained a far from optimal  rate of $(\log N)^{1-1/\nu}$ with  $\nu\in(0,1)$ (equivalent   to that of \cite{MR1720101}), when the initial condition $f_0$ has a finite $p$-th order moments for some large $p$.  However, we  refine  this rate to a {\it close to optimal rate} of $N^{-1/3}$ but with a stronger initial condition $f_0$ that has a finite exponential moment.  Finally, we both exploited  the coupling strategy which is widely used since Sznitman \cite{MR1108185} for studying rate of chaos for the McKean-Vlasov model. Our proof is mainly based on  a new version of  coupling method, called second coupling method,  introduced in \cite{MR3769742,MR3621429}:  we couple the $N$-particle system with effective binary interactions with a family of $N$ non-independent Boltzmann processes driven by the same Poisson random measure, in such a way, the particles in the system behave like the original one determined by the Boltzmann equation as much as possible. In the second step, we show that these non-independent Boltzmann processes will become independent in some sense as $N\to\infty$. While in \cite{MR4419606},   Heydecker,  using the Tanaka's trick \cite{MR512334}, coupled the Kac's particle system characterized by a Poisson driven SDE with the generator $\cL_N$ defined in \eqref{kacg}, named Kac's processes,  with a cutoff Kac's processes to conclude the result, relying also on some stabilization result.  The totally different coupling is probably one of the main reasons that why we have different rate of chaos convergence.

\vip 
Let's also mention some other works \cite{MR3621429,MR3572320,MR3456347,MR3784497} in terms of  propagation of chaos. In \cite{MR3621429}, Fournier and Guillin used the same second coupling strategy to get the rate of chaos for the Landau equation in the hard potentials and Maxwell molecules cases, and in \cite{MR3572320}, Fournier and Hauray considered the singular soft potentials for the Landau equation. In \cite{MR3456347}, Fournier and Mischler gave an optimal rate of $N^{-1/2}$ for {\it Nanbu} particle system, which means that only one particle changes  while  the other keeps at each collision (less meaningful from the physical point of view),  for the Boltzmann equation in the case of hard potentials, hard spheres and Maxwell molecules. Thus, they  can couple the system with a system of i.i.d. Boltzmann processes.  Following their road,  \cite{MR3784497} extended the chaos  result to the singular {\it soft potentials}, even though the rate is far from sharp. Also \cite{Salem19} obtained the convergence without rate for the {\it soft potentials}.

\vip

Compared to the Maxwell molecules case, the main difficulty arises from the term $|v-v_*|^\gamma$ in the cross section for the hard potential,  which complicates the main computation in Section 2. The second difficulty arises from the lack of continuity of the parametrization  of the collision angles,  so that  we need to  handle the cutoff Boltzmann processes when we consider the second coupling, which makes  the computations  in  \cite{MR3456347}  doesn't fit well our case since the particles in the system have only uniformly  finite moments with order greater than $4$! The third difficulty is that the collision in the system is binary which leads to the loss of  independence of the system, this obviously increases the complexity of the problem.
To summarize:  We obtain a  better rate of convergence for hard potentials and hard spheres  even though  this rate is not sharp and uniformly in time. 

\vip

\subsection {plan of the paper.} In Section 2, we give the core estimation on the collision integral. Section 3 focus  on the coupling of the system. In section 4, we proved  the convergence  of  the particle system.
\vip
In the sequel, $C$ stands for a positive constant whose value may change from line to line. When necessary, we will indicate in subscript the parameters it depends on.

\setcounter{equation}{0}

\section{Preliminaries}\label{sec:MainComput}
In this section, we will do some preparations. At first, we rewrite the collision operator by using a substitution $\theta=G(z/|v-v_*|^\gamma)$ to make disappear the velocity-dependence $|v-v_*|^{\gamma}$ in the \emph{rate} like  that in \cite{MR2398952,MR1885616, MR3456347}, which avoids the complexity of formulas in the whole paper.

 \begin{lem}\label{rewriteA} 
Assume \eqref{cs} and \eqref{c2hs} or \eqref{c2}. Recall $G$ defined in \eqref{defH} and $a$ defined in \eqref{dfvprime}.  For $z\in (0,\infty)$, $\varphi\in [0,2\pi)$, $v,v_*\in \rd$, we set
\begin{equation}\label{dfc}
c(v,v_*,z,\varphi):=a[v,v_*,G(z/|v-v_*|^\gamma),\varphi].
\end{equation} 
For any bounded Lipschitz $\phi:\rd\mapsto\rr$,  any $v,v_*\in \rd$
\begin{eqnarray}\label{agood}
\cA\phi(v,v_*)&=&\int_0^\infty dz \int_0^{2\pi}d\varphi
\Big(\phi[v+c(v,v_*,z,\varphi)] -\phi[v]\Big).
\end{eqnarray}
For any $N\geq 1$, $\bv=(v_1,\dots,v_N)\in(\rd)^N$, any bounded measurable
$\phi:(\rd)^N\mapsto\rr$,
\begin{equation}\label{lKgood}
\cL_{N} \phi(\bv)= \frac 1 {2(N-1)} \sum_{i \ne j} \int_0^\infty dz\int_0^{2\pi}d\varphi 
\Big[\phi\big(\bv + c(v_i,v_j,z,\varphi)\be_i+c(v_j,v_i,z,\varphi)\be_j\big) - \phi(\bv)\Big].
\end{equation}
\end{lem}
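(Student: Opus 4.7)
The plan is to establish both identities by a one-dimensional change of variable $\theta = G(z/|v-v_*|^\gamma)$, equivalently $z = |v-v_*|^\gamma H(\theta)$, applied inside the $\theta$-integral that appears in $\cA\phi$ and in each summand of $\cL_N\phi$. Fix $v\neq v_*$; the case $v=v_*$ is trivial since then $a\equiv 0\equiv c$ and both sides vanish. The function $H$ is $C^1$ and strictly decreasing on $(0,\pi/2)$ with $H'(\theta)=-\beta(\theta)$, and it maps $(0,\pi/2)$ onto $(0,\infty)$ under \eqref{c2} and onto $(0,\pi/2)$ under \eqref{c2hs}. Setting $z=|v-v_*|^\gamma H(\theta)$ thus gives $dz = |v-v_*|^\gamma \beta(\theta)\,d\theta$ (in absolute value) and $\theta = G(z/|v-v_*|^\gamma)$, so by \eqref{dfc} the vector $a(v,v_*,\theta,\varphi)$ becomes exactly $c(v,v_*,z,\varphi)$.

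For \eqref{agood}, I would simply insert this substitution into \eqref{afini}. The $\theta$-integral becomes an integral in $z$ over $(0,|v-v_*|^\gamma H(0^+))$. Under \eqref{c2} this range is precisely $(0,\infty)$. Under \eqref{c2hs} the range is $(0,|v-v_*|^\gamma\pi/2)$; but for $z > |v-v_*|^\gamma\pi/2$ one has $G(z/|v-v_*|^\gamma)=0$, hence $c(v,v_*,z,\varphi)=a(v,v_*,0,\varphi)=0$ by inspection of \eqref{dfvprime}, so the integrand $\phi(v+c)-\phi(v)$ vanishes. Extending the $z$-integration to $(0,\infty)$ in both cases yields \eqref{agood}.

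For \eqref{lKgood}, the first task is to rewrite the $\sigma$-integral in \eqref{kacg} in spherical coordinates $(\theta,\varphi)\in(0,\pi)\times[0,2\pi)$ adapted to the axis $(v_i-v_j)/|v_i-v_j|$, using $d\sigma=\sin\theta\,d\theta\,d\varphi$, $B(|v_i-v_j|,\theta)\sin\theta=|v_i-v_j|^\gamma\beta(\theta)$ from \eqref{cs}, and $\beta\equiv 0$ on $[\pi/2,\pi]$. Then \eqref{dfvprime} identifies the velocity updates as $v'(v_i,v_j,\sigma)-v_i=a(v_i,v_j,\theta,\varphi)$ and $v'_*(v_i,v_j,\sigma)-v_j=a(v_j,v_i,\theta,\varphi)$, and the same $z$-substitution converts each $a$ into the corresponding $c$ and produces \eqref{lKgood}. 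The main point of bookkeeping I would check carefully is that the conventional choice of the orthonormal frame $(X/|X|,I(X)/|X|,J(X)/|X|)$ is compatible with the identity $v'_*-v_j=a(v_j,v_i,\theta,\varphi)$ (i.e. that $\Gamma(-X,\varphi)=-\Gamma(X,\varphi)$, up to a harmless relabeling of $\varphi$) and that the integrand vanishes identically past the cutoff threshold under \eqref{c2hs}; beyond this, the lemma reduces to a single change of variable.
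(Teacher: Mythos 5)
Your argument for \eqref{agood} via the change of variable $z = |v-v_*|^\gamma H(\theta)$, with $dz = |v-v_*|^\gamma\beta(\theta)\,d\theta$ and the observation that $c(v,v_*,z,\varphi)=0$ for $z>|v-v_*|^\gamma\pi/2$ under \eqref{c2hs}, is complete and correct. The paper itself gives no proof of this lemma --- it only cites Lemma~2.2 of Fournier--Mischler, which is stated for the Nanbu generator (one particle updated per jump) --- so for \eqref{agood} your reconstruction is the natural one.

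For \eqref{lKgood}, however, there is a genuine gap at precisely the point you flag, and the hedge you offer does not close it. By conservation of momentum $v'_*(v_i,v_j,\sigma)-v_j=-(v'(v_i,v_j,\sigma)-v_i)=-a(v_i,v_j,\theta,\varphi)$, so matching \eqref{kacg} to \eqref{lKgood} requires the pointwise antisymmetry $a(v_j,v_i,\theta,\varphi)=-a(v_i,v_j,\theta,\varphi)$, equivalently $\Gamma(-X,\varphi)=-\Gamma(X,\varphi)$. You propose this holds ``up to a harmless relabeling of $\varphi$,'' but no such relabeling is available: in \eqref{lKgood} the two terms $c(v_i,v_j,z,\varphi)\be_i$ and $c(v_j,v_i,z,\varphi)\be_j$ are evaluated at the \emph{same} $\varphi$, and since $\varphi\mapsto c(v_i,v_j,z,\varphi)$ is injective on $[0,2\pi)$, any substitution $\varphi\mapsto\tau(\varphi)$ that repairs the $\be_j$-term would distort the $\be_i$-term unless $\tau=\mathrm{id}$. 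The identity must therefore hold literally, which forces the odd choice $I(-X)=-I(X)$, $J(-X)=-J(X)$; note that this is incompatible with insisting that $(X/|X|,I(X)/|X|,J(X)/|X|)$ be a \emph{direct} orthonormal basis for all $X$, since the determinant flips sign under $X\mapsto -X$. The paper does in fact rely on exactly this pointwise antisymmetry $c(v,w,z,\varphi)=-c(w,v,z,\varphi)$ later (Step~1 of the proof of Proposition~\ref{kpst}) without comment, so the intended convention is the odd one and ``direct'' must be read loosely. Your write-up should fix this choice of $I,J$ explicitly and deduce the antisymmetry from it, rather than appeal to a $\varphi$-relabeling; with that in place, the remainder of the derivation (spherical parametrization around $(v_i-v_j)/|v_i-v_j|$, the cutoff $\beta\equiv 0$ on $[\pi/2,\pi]$, and the same $z$-substitution as in part one) goes through.
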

This Lemma is exactly \cite[Lemma 2.2]{MR3456347} just for different collision operator. We thus  omit the proof here. 
Next, we give the following important estimations which are part of the core computations.

\begin{lem}\label{fundest}
Note that $G$ was defined in \eqref{defH} and $c$ was defined in  \eqref{dfc}. For any $v,v_*,\tv,\tv_* \in \rd$, any $K\in [1,\infty)$, we set $c_K(v,v_*,z,\varphi):=c(v,v_*,z,\varphi)\indiq_{\{z \leq K\}}$.
\vip
\benu[label=\emph{(\roman*)}]
\item  Denote $\Phi_K(x)=\pi \int_0^K (1-\cos G(z/x^\gamma))dz$, $\Psi_K=\pi \int_K^\infty (1-\cos G(z/x^\gamma))dz$. Then
\begin{align*}
&\int_0^\infty \int_0^{2\pi} \Big( 
\big|v+c(v,v_*,z,\varphi)-\tv-c_K(\tv,\tv_*,z,\varphi+\varphi_0(v-v_*,\tv-\tv_*)) \big|^2 - |v-\tv|^2
\Big) d\varphi   dz\\
\leq& I_1^K(v,v_*,\tv,\tv_*)+I_2^K(v,v_*,\tv,\tv_*)+I_3^K(v,v_*,\tv,\tv_*),
\end{align*}
where 
\begin{align*}
I_1^K(v,v_*,\tv,\tv_*):= &2  |v-v_*||\tv-\tv_*| 
\int_0^K \big[G(z/|v-v_*|^\gamma)-G(z/|\tv-\tv_*|^\gamma)\big]^2 dz,\\
I_2^K(v,v_*,\tv,\tv_*):= &-  \big[(v-\tv)+(v_*-\tv_*)\big]\cdot\big[(v-v_*)
\Phi_K(|v-v_*|)-(\tv-\tv_*)\Phi_K(|\tv-\tv_*|)\big],\\
I_3^K(v,v_*,\tv,\tv_*):= &(|v-v_*|^2+2|v-\tv||v-v_*|)\Psi_K(|v-v_*|).
\end{align*}

\item 
\begin{align*}
\int_0^\infty \int_0^{2\pi} \Big( 
\big|v+c_K(v,v_*,z,\varphi)-\tv \big|^2 - |v-\tv|^2
\Big) d\varphi   dz
\leq C(1+|v|^{2+2\gamma}+|\tv|^{2+2\gamma}+|v_*|^{2+2\gamma}). 
\end{align*}
\eenu
\end{lem}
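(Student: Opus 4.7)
I would expand $|v-\tv + c - \tc_K|^2 - |v-\tv|^2 = |c|^2 + |\tc_K|^2 - 2c\cdot \tc_K + 2(v-\tv)\cdot(c-\tc_K)$ (writing $\tc_K := c_K(\tv,\tv_*,z,\varphi+\varphi_0)$) and split the $z$-integration at $K$, since $\tc_K\equiv0$ on $\{z>K\}$. Setting $\theta := G(z/|v-v_*|^\gamma)$ and $\tilde\theta := G(z/|\tv-\tv_*|^\gamma)$, two identities drive the calculation: $|c|^2 = \tfrac{1-\cos\theta}{2}|v-v_*|^2$ is $\varphi$-independent by \eqref{num}, and $\int_0^{2\pi}\Gamma(v-v_*,\varphi)\,d\varphi = 0$. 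Consequently $\int_0^{2\pi}c\,d\varphi = -\pi(1-\cos\theta)(v-v_*)$, with analogous formulas for $\tc_K$ on $\{z\le K\}$.

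On $\{z>K\}$, only the $c$-terms survive; the $\varphi$-then-$z$ integration of $|c|^2 + 2(v-\tv)\cdot c$ produces $|v-v_*|^2\Psi_K(|v-v_*|) - 2(v-\tv)\cdot(v-v_*)\Psi_K(|v-v_*|)$, bounded by $I_3^K$ via Cauchy-Schwarz. On $\{z\le K\}$, the $\varphi$-averages of $|c|^2$, $|\tc_K|^2$ and the linear term $2(v-\tv)\cdot(c-\tc_K)$ combine, after $z$-integration, to give $|v-v_*|^2\Phi_K(|v-v_*|) + |\tv-\tv_*|^2\Phi_K(|\tv-\tv_*|) - 2(v-\tv)\cdot[(v-v_*)\Phi_K(|v-v_*|) - (\tv-\tv_*)\Phi_K(|\tv-\tv_*|)]$. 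The remaining piece $-2\int_0^K\int_0^{2\pi}c\cdot\tc_K\,d\varphi\,dz$ is where the Tanaka coupling angle $\varphi_0 = \varphi_0(v-v_*,\tv-\tv_*)$ enters: decomposing $c\cdot\tc_K$ into radial-radial, two mixed radial-$\Gamma$, and $\Gamma$-$\Gamma$ components, the mixed pieces vanish upon $\varphi$-integration and the radial-radial piece gives $\tfrac{\pi(1-\cos\theta)(1-\cos\tilde\theta)}{2}(v-v_*)\cdot(\tv-\tv_*)$; for the $\Gamma$-$\Gamma$ piece I pick $\varphi_0$ to maximize $\int_0^{2\pi}\Gamma(v-v_*,\varphi)\cdot\Gamma(\tv-\tv_*,\varphi+\varphi_0)\,d\varphi$, whose value a direct coordinate computation gives as $\pi(|v-v_*||\tv-\tv_*|+(v-v_*)\cdot(\tv-\tv_*))$.

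Reassembling and using the identity $(v-\tv)-(v_*-\tv_*) = (v-v_*)-(\tv-\tv_*)$ to recover the dot product in $I_2^K$, the desired bound reduces to verifying an inequality of the form $\pi\int_0^K[X\cdot Y(1-\cos(\theta-\tilde\theta)) + \tfrac12(X\cdot Y-|X||Y|)\sin\theta\sin\tilde\theta]\,dz \le I_1^K$ with $X := v-v_*$, $Y := \tv-\tv_*$. The second summand is non-positive by Cauchy-Schwarz; the first is at most $\tfrac{\pi}{4}I_1^K$ using $1-\cos\xi\le\tfrac{\xi^2}{2}$ and $\theta-\tilde\theta = G(z/|X|^\gamma)-G(z/|Y|^\gamma)$. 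The main obstacle is this careful bookkeeping: one must track all the radial/$\Gamma$ cross contributions, the sign of each piece, and match the leftover $(v_*-\tv_*)$-piece with what is needed to complete $I_2^K$.

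\textbf{Part (ii)} is lighter. Expand $|v+c_K-\tv|^2-|v-\tv|^2 = |c_K|^2 + 2(v-\tv)\cdot c_K$; the same $\varphi$-averaging yields $|v-v_*|^2\Phi_K(|v-v_*|) - 2(v-\tv)\cdot(v-v_*)\Phi_K(|v-v_*|) \le (|v-v_*|^2+2|v-\tv||v-v_*|)\Phi_K(|v-v_*|)$. The substitution $u = z/|v-v_*|^\gamma$ combined with $G(u)\le c_3(1+u)^{-1/\nu}$ from \eqref{eG} (hence $1-\cos G(u)\le G(u)^2 \le C(1+u)^{-2/\nu}$ is integrable since $\nu<1$; under \eqref{c2hs}, $G(u)=\max(\pi/2-u,0)$ makes integrability trivial) gives $\Phi_K(|v-v_*|)\le C|v-v_*|^\gamma$ uniformly in $K$. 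Young's inequality then absorbs everything into $C(1+|v|^{2+2\gamma}+|\tv|^{2+2\gamma}+|v_*|^{2+2\gamma})$.
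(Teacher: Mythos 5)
Your proposal is correct and follows essentially the same route as the paper, which simply defers the core Tanaka-coupling computation to Lemma 3.1 of Fournier--Mischler \cite{MR3456347} and, for Part~(ii), bounds $\Phi_K(x)\le C x^\gamma$ before concluding. You have written out in full the $\varphi$-average decomposition, the optimal-$\varphi_0$ identity $\int_0^{2\pi}\Gamma(X,\varphi)\cdot\Gamma(Y,\varphi+\varphi_0)\,d\varphi = \pi\bigl(|X||Y|+X\cdot Y\bigr)$, and the final elementary bounds (sign of the $\sin\theta\sin\tilde\theta$ term, $1-\cos\xi\le\xi^2/2$, Young) that the paper leaves implicit.
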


\begin{proof} 
(i) This point largely follows the same arguments as Lemma 3.1 of \cite{MR3456347}.  Roughly speaking, it is mainly derived from the properties of $c$ and $G$ and from the well known trick initiated by Tanaka in  \cite{MR512334} dealing with the Maxwell molecules for the Boltzmann equation. For convenience, we will briefly write down the proof again here. For simplicity, we write $x=|v-v_*|$, $\tx=|\tv-\tv_*|$, $\bar\varphi=\varphi+\varphi_0(v-v_*,\tv-\tv_*)$,
$c=c(v,v_*,z,\varphi)$, $\tc=c(\tv,\tv_*,z,\bar\varphi)$ and
$\tc_K=c_K(\tv,\tv_*,z,\bar\varphi)=\tc\indiq_{\{z\leq K\}}$. Recall the definition of $c(v,v_*,z,\varphi)$ in 
\eqref{dfc}, and the formula \eqref{num}, we have $
\int_0^K \int_0^{2\pi} |c|^2 d\varphi dz =x^2  \Phi_K(x)$ and $\int_0^K \int_0^{2\pi} |\tc|^2 d\varphi dz = \tx^2  \Phi_K(\tx)$. Also recall \eqref{dfvprime} and \eqref{dfc}, using that $\int_0^{2\pi} \Gamma(v-v_*,\varphi) d\varphi=0$, we know that $\int_0^K \int_0^{2\pi} c d\varphi dz= - (v-v_*) \Phi_K(x).$ And in  the same way, 
$$\int_0^K \int_0^{2\pi} \tc d\varphi dz= - (\tv-\tv_*) \Phi_K(\tx), \int_K^\infty \int_0^{2\pi} c d\varphi dz= - (\tv-\tv_*) \Psi_K(\tx).$$ Operating  as Lemma 3.1 of \cite{MR3456347}, we see that 
\beq\label{cesti}
\int_0^K \int_0^{2\pi}(|v+c-\tv-\tc|^2-|v-\tv|^2)d\varphi dz\le I_1^K(v,v_*,\tv,\tv_*)+I_2^K(v,v_*,\tv,\tv_*).
\eeq
And $\int_K^\infty \int_0^{2\pi}(|v+c-\tv|^2-|v-\tv|^2)d\varphi dz\le I_3^K(v,v_*,\tv,\tv_*)$ with $I_3^K(v,v_*,\tv,\tv_*):=(x^2+2|v-\tv|x)\Psi_K(x).$ Which gives Point (i).

 \vip
 (ii)  A straightforward computation gives 
\begin{align*}
    \int_0^\infty \int_0^{2\pi} \Big( 
\big|v+c_K-\tv \big|^2 - |v-\tv|^2
\Big) d\varphi   dz
=\int_0^K \int_0^{2\pi} \Big( 
\big|c|^2 +2 (v-\tv)\cdot c
\Big) d\varphi   dz.
\end{align*}
We know $\int_0^K \int_0^{2\pi} |c|^2 d\varphi dz =x^2  \Phi_K(x)$ and  $\int_0^K \int_0^{2\pi} c d\varphi dz= - (v-v_*) \Phi_K(x)$ according to the computation in (i). Using that $\int_0^\infty G^2(z/x^\gamma)dz=x^\gamma\int_0^\infty G^2(z)dz$, we have
$$
\int_0^K \int_0^{2\pi}\big|c|^2 d\varphi   dz\le\pi|v-v_*|^2\int_0^\infty (1-\cos G(z/|v-v_*|^\gamma))dz\le C|v-v_*|^{2+\gamma},
$$
and
\begin{align*}
\Big|\int_0^K \int_0^{2\pi} c d\varphi dz\Big|\le \pi |v-v_*| \int_0^\infty (1-\cos G(z/|v-v_*|^\gamma))dz 
\le C|v-v_*|^{1+\gamma}.
\end{align*}
This implies Point (ii).
\end{proof}

We now give our main computations. Due to our particle system has a uniform (only depending on the moments of the initial data)  bounded  moments of order  greater than 4,  which enables us to  relax the strict requirements on the moment of the particle system,  but also makes our computation more succinct because we now can allow our estimates to involve powers of  $p$ greater than $4$ of $v,v_*$ and $\tv,\tv_*$. Let's  first deal with the complicated hard potential.

\begin{lem}\label{further}  Assume \eqref{cs} and \eqref{c2} for $\gamma\in(0,1)$ and adopt the notation of Lemma \ref{fundest}.
\vip
(i) For all $q>0$, there is $C_q>0$ 
such that for all $M\geq 1$, all $K\in [1,\infty)$, all $v,v_*,\tv,\tv_* \in \rd$,
\begin{align*}
I_1^K(v,v_*,\tv,\tv_*) \leq & M(|v-\tv|^2+ |v_*-\tv_*|^2) + C_q 
e^{-M^{q/\gamma}} e^{C_q (|\tv|^q+|\tv_*|^q)}.
\end{align*}

\vip
(ii) There is $C>0$ such that for all $K\in [1,\infty)$, all $v,v_*,\tv,\tv_* \in \rd$ and all $z_*\in\rd$,
\begin{align*}
I_2^K(v,v_*,\tv,\tv_*)- I_2^K(v,v_*,\tv,z_*) \leq & C\Big[|v-\tv|^2 + |v_*-\tv_*|^2 \\
& \hskip1cm + |\tv_*-z_*|^2(1+|v|+|v_*|+|\tv|+|\tv_*|+|z_*|)^{2} \Big].
\end{align*}

(iii) For  all $p\ge 2+2\gamma/\nu$, there is $C_{p}>0$ such that for all $K\in [1,\infty)$, all $v,v_*,\tv,\tv_* \in \rd$,
\begin{align*}
I_3^K(v,v_*,\tv,\tv_*) \leq &  C_{p}(1+|v|^{p}+|v_*|^{p}+|\tv|^{p})K^{1-2/\nu}.
\end{align*}
\end{lem}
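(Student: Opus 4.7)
The plan is to treat the three parts separately, leveraging the integral estimate \eqref{c3} and the decay \eqref{eG} of $G$, in the spirit of Lemma~3.2 of \cite{MR3456347} but adapted to the hard-potential setting $\gamma\in(0,1)$.

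For part~(i), I first invoke \eqref{c3} with $x=|v-v_*|^\gamma$, $y=|\tv-\tv_*|^\gamma$, giving
\[ I_1^K(v,v_*,\tv,\tv_*) \leq 2c_4\,|v-v_*|\,|\tv-\tv_*|\,\frac{(|v-v_*|^\gamma-|\tv-\tv_*|^\gamma)^2}{|v-v_*|^\gamma+|\tv-\tv_*|^\gamma}. \]
Writing $Y:=|v-\tv|+|v_*-\tv_*|$ and $T:=|\tv|+|\tv_*|$, combining the elementary $(x-y)^2/(x+y)\leq|x-y|$ with $|x^\gamma-y^\gamma|\leq|x-y|^\gamma$ (valid for $\gamma\in(0,1)$) reduces this to $I_1^K\leq C\,|v-v_*|\,|\tv-\tv_*|\,Y^\gamma$. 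The triangle inequality $|v-v_*|\leq|\tv-\tv_*|+Y\leq T+Y$ and $|\tv-\tv_*|\leq T$ then produce a sum of terms of the shape $T^pY^\gamma$ with $p\in\{1,2\}$. A weighted Young inequality gives $T^pY^\gamma\leq\lambda Y^2+C\,\lambda^{-\gamma/(2-\gamma)}T^{2p/(2-\gamma)}$ (with $\lambda\simeq M$), and to upgrade the polynomial residual to the desired exponential form I split according to whether $T\leq M^{1/\gamma}$ or $T>M^{1/\gamma}$: on the large part, $T^\alpha\mathbf{1}_{T>M^{1/\gamma}}\leq T^\alpha e^{T^q-M^{q/\gamma}}\leq C_q e^{-M^{q/\gamma}}e^{C_q T^q}$, which absorbs into $C_q e^{-M^{q/\gamma}}e^{C_q'(|\tv|^q+|\tv_*|^q)}$; on the small part, the residual is polynomial in $M$ and is dominated by $C_q e^{(C_q-1)M^{q/\gamma}}$ once $C_q$ is chosen large enough. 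The main obstacle of the lemma is precisely this bookkeeping: the Young parameter $\lambda$, the threshold $M^{1/\gamma}$, and the exponential constant $C_q$ must be tuned together so that the $MY^2$ piece has coefficient exactly $M$ and the error is asymmetric, depending only on $\tv,\tv_*$; the preliminary replacement $|v-v_*|\leq|\tv-\tv_*|+Y$ is what removes $v,v_*$ from the error.

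For part~(ii), I decompose algebraically by setting $A:=(v-v_*)\Phi_K(|v-v_*|)$ and $B(w):=(\tv-w)\Phi_K(|\tv-w|)$; after expanding and collecting terms,
\[ I_2^K(v,v_*,\tv,\tv_*)-I_2^K(v,v_*,\tv,z_*) = \bigl[(v-\tv)+(v_*-\tv_*)\bigr]\cdot\bigl[B(\tv_*)-B(z_*)\bigr] + (\tv_*-z_*)\cdot\bigl[A-B(z_*)\bigr]. \]
The key estimate is the Lipschitz-type bound $|u\Phi_K(|u|)-u'\Phi_K(|u'|)|\leq C|u-u'|(|u|+|u'|)^\gamma$, which follows from $\Phi_K(x)\leq Cx^\gamma$ and $|\Phi_K'(x)|\leq Cx^{\gamma-1}$ (both uniform in $K\geq1$ via the substitution $u=z/x^\gamma$ and \eqref{eG}) combined with the mean value theorem. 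Applying this to the two pieces above, splitting $|v_*-z_*|\leq|v_*-\tv_*|+|\tv_*-z_*|$ and using Cauchy--Schwarz $ab\leq\tfrac12a^2+\tfrac12b^2$ together with $(1+|v|+\cdots)^{2\gamma}\leq(1+|v|+\cdots)^2$ (valid since $\gamma\leq1$) delivers the claimed estimate.

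For part~(iii), I establish the uniform bound $\Psi_K(x)\leq C\,K^{1-2/\nu}x^{2\gamma/\nu}$ valid for all $x\geq0$ and $K\geq1$. Changing variables $u=z/x^\gamma$ gives $\Psi_K(x)=\pi x^\gamma\int_{K/x^\gamma}^\infty(1-\cos G(u))\,du$; when $K/x^\gamma\geq1$ we use $1-\cos G(u)\leq G(u)^2/2\leq C(1+u)^{-2/\nu}$ from \eqref{eG} and integrate explicitly (noting $2/\nu>1$), while when $K/x^\gamma<1$ the trivial $\Psi_K(x)\leq Cx^\gamma$ already exceeds $CK^{1-2/\nu}x^{2\gamma/\nu}$ because $1-2/\nu<0$. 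Inserting into $I_3^K=(|v-v_*|^2+2|v-\tv||v-v_*|)\Psi_K(|v-v_*|)$ produces terms $|v-v_*|^{2+2\gamma/\nu}$ and $|v-\tv|\,|v-v_*|^{1+2\gamma/\nu}$; expanding via the triangle inequality and applying the Young-type inequality $ab^{1+2\gamma/\nu}\leq C(a^{2+2\gamma/\nu}+b^{2+2\gamma/\nu})$ bounds everything by $C(1+|v|^{2+2\gamma/\nu}+|v_*|^{2+2\gamma/\nu}+|\tv|^{2+2\gamma/\nu})K^{1-2/\nu}$, which for $p\geq2+2\gamma/\nu$ is majorized by $C_p(1+|v|^p+|v_*|^p+|\tv|^p)K^{1-2/\nu}$.
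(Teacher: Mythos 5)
Your treatment of parts~(ii) and (iii) is essentially the paper's argument. For (ii), your decomposition $\bigl[(v-\tv)+(v_*-\tv_*)\bigr]\cdot\bigl[B(\tv_*)-B(z_*)\bigr]+(\tv_*-z_*)\cdot\bigl[A-B(z_*)\bigr]$ agrees (after sign bookkeeping) with the paper's \eqref{topcool1}, and the Lipschitz-type bound on $u\mapsto u\Phi_K(|u|)$ plays the same role as \eqref{fi}; the paper obtains it from $|\Phi_K(x)-\Phi_K(y)|\le C|x^\gamma-y^\gamma|$ plus $|x^\gamma-y^\gamma|\le 2|x-y|/(x^{1-\gamma}+y^{1-\gamma})$, which is more robust than your asserted pointwise bound $|\Phi_K'(x)|\le Cx^{\gamma-1}$ (the derivative of $\Phi_K$ through the substitution picks up a boundary term, and the stated bound is not obviously uniform in $K$); but this is a cosmetic difference. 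For (iii) your splitting on $K/x^\gamma\gtrless 1$ is unnecessary---since $G(z)\le c_3(1+z)^{-1/\nu}\le c_3z^{-1/\nu}$, the paper just integrates $z^{-2/\nu}$ on $[K,\infty)$ directly---but you reach the same estimate.

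Part~(i) has a genuine gap. Your very first reduction $I_1^K\le C\,|v-v_*|\,|\tv-\tv_*|\,Y^\gamma$ (using $|x^\gamma-y^\gamma|\le|x-y|^\gamma$) loses a full power of the difference: the integral in $I_1^K$ vanishes to second order in $|x-y|$ near $x=y$, and the correct elementary bound, from $|x^\gamma-y^\gamma|\le 2|x-y|/(x^{1-\gamma}+y^{1-\gamma})$, is
\[
I_1^K\ \le\ C\,\min\bigl(|v-v_*|,|\tv-\tv_*|\bigr)^\gamma\,\bigl||v-v_*|-|\tv-\tv_*|\bigr|^2\ \le\ C\,|\tv-\tv_*|^\gamma\,Y^2,
\]
which produces the $Y^2$ already paired with a prefactor bounded by $T^\gamma$. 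Your version has $Y^\gamma$ in its place, and this forces you into a weighted Young inequality with weight $\lambda\simeq M$; the resulting residual is $\simeq M^{-\gamma/(2-\gamma)}T^{4/(2-\gamma)}$ for the $T^2Y^\gamma$ piece (and is in fact the sharp $\sup_Y$ over all $Y$, so there is no room to do better with that starting bound). But for fixed $T$ and $M\to\infty$, $M^{-\gamma/(2-\gamma)}$ decays only polynomially, while the allotted error $C_q e^{-M^{q/\gamma}}e^{C_qT^q}$ decays exponentially --- so the residual cannot be absorbed, no matter how $C_q$ is chosen. Your "small part/large part" split does not rescue this: on $T\le M^{1/\gamma}$ you need a lower bound on $e^{C_q T^q}$, and the best one has is $e^{C_qT^q}\ge 1$, which leaves the target error at $C_q e^{-M^{q/\gamma}}$, still far below the residual; the claim that the residual "is dominated by $C_q e^{(C_q-1)M^{q/\gamma}}$" compares it with an upper bound on the target, which is the wrong direction. (There is also a small slip: expanding $|v-v_*||\tv-\tv_*|\le(T+Y)T$ yields $T^2Y^\gamma$ and $TY^{1+\gamma}$, not a sum of pure $T^pY^\gamma$ terms.) The fix is to split on $|\tv-\tv_*|^\gamma\lesssim M$ \emph{before} any Young inequality: when $T^\gamma\lesssim M$ the refined bound $I_1^K\le CT^\gamma Y^2$ closes the estimate outright; when $T^\gamma\gtrsim M$ one switches to $I_1^K\le C\min(x,y)\max(x,y)^{1+\gamma}\le C\,T(T+Y)^{1+\gamma}\le C(T^{2+\gamma}+TY^{1+\gamma})$ and applies Young only to the cross term, and there the polynomial residual $M^{-(1+\gamma)/(1-\gamma)}T^{2/(1-\gamma)}$ really is dominated by $C_qe^{-M^{q/\gamma}}e^{C_qT^q}$ because $e^{-M^{q/\gamma}}\ge e^{-C'T^q}$ on that set. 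Note that the paper itself does not prove (i) --- it invokes Lemma~3.3-(i) of \cite{MR3456347} --- so this is precisely the point where your self-contained attempt has to carry the full weight of the argument and currently does not.
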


\begin{proof} (i) For $0<\gamma<1$,  this point exactly follows from Lemma 3.3-(i)  of \cite{MR3456347}. 

\vip
(ii) To bound $I_2^K$, the following proof is the variant of   Lemma  3.3 of \cite{MR3456347}. According to Lemma  3.3 of \cite{MR3456347}, we see that there is $C>0$ for all  $K\ge1$, 
\[\Phi_K(x)\le Cx^\gamma, \quad |\Phi_K(x)-\Phi_K(y)|\le C|x^\gamma-y^\gamma|.\]
Hence, for all $X,Y\in\rd$
\begin{align*}
|X\Phi_K(|X|)-Y\Phi_K(|Y|)|\le C|X-Y||X|^\gamma+C|Y|||X|^\gamma-|Y|^\gamma|.
\end{align*}
Using again that $|x^\gamma-y^\gamma|\le 2|x-y|/(x^{1-\gamma}+y^{1-\gamma})$ for $x>0, y>0$, then 
\beq\label{fi}
|X\Phi_K(|X|)-Y\Phi_K(|Y|)|\le C|X-Y|(|X|^\gamma+|Y|^\gamma).
\eeq
Since $I^K_2$ is anti-symmetric for  $(v,v_*)$ and  $(\tv,\tv_*)$, we have 
\begin{align}\label{topcool1}
\Delta_2^K:=&I_2^K(v,v_*,\tv,\tv_*)-I_2^K(v,v_*,\tv,z_*) \nonumber \\
=& -  \big[(v-\tv)+(v_*-\tv_*)\big]\cdot\big[(\tv-z_*) \Phi_K(|\tv-z_*|)-(\tv-\tv_*)\Phi_K(|\tv-\tv_*|)\big]\\
&+ (\tv_*-z_* )\cdot\big[(v-v_*) \Phi_K(|v-v_*|)-(\tv-z_*)\Phi_K(|\tv-z_*|)\big].\nonumber
\end{align}
By \eqref{fi}, we deduce that
\begin{align*}
\Delta_2^K\leq &C (|v-\tv|+|v_*-\tv_*|) |\tv_*-z_*| (|\tv-z_*|^\gamma+|\tv-\tv_*|^\gamma)\\
&+C |\tv_*-z_* | (|v-\tv|+|v_*-z_*|)(|v-v_*|^\gamma + |\tv-z_*|^\gamma)\\
\leq & C  [(|v-\tv|+|v_*-\tv_*|)^2 +  |\tv_*-z_*|^2 (|\tv-z_*|^\gamma+|\tv-\tv_*|^\gamma)^2 ]\\
&+ C|\tv_*-z_*| (|v-\tv|+|v_*-\tv_*|+|\tv_*-z_*|)(|v-v_*|^\gamma + |\tv-z_*|^\gamma).
\end{align*}
It's clear that the first term is  bounded by
$C  (|v-\tv|^2+|v_*-\tv_*|^2 + |\tv_*-z_*|^2 (1+|\tv| +|\tv_*|+|z_*|)^{2\gamma})$ which fits the statement, since
$2\gamma< 2$. Next, we bound the second term by
\begin{align*}
&C|\tv_*-z_* |^2 (|\tv-z_*|+ |v-v_*|)^\gamma\\
&+ C|\tv_*-z_* |(|v-\tv|+|v_*-\tv_*|) (|\tv-z_* |+ |v-v_*|)^\gamma.
\end{align*}
Using that $xyz^\gamma\leq (xz^\gamma)^2
+y^2$ (for the last line), we obtain 
\begin{align*}
&C|\tv_*-z_* |^2 (|\tv-z_* |+ |v-v_*|)^{\gamma}\\
&+ C (|v-\tv|+|v_*-\tv_*|)^2 + |\tv_*-z_* |^2  (|\tv-z_* |+ |v-v_*|)^{2\gamma},
\end{align*}
which is bounded by 
\begin{align*} 
C (|v-\tv|^2+|v_*-\tv_*|^2) + C |\tv_*-z_* |^2 (1+|\tv|+|z_*| + |v| + |v_*|)^2.
\end{align*}
One easily concludes the point using that $2>2\gamma$.

\vip

We finally check point (iii). Recalling  $\Psi_K=\pi \int_K^\infty (1-\cos G(z/x^\gamma))dz$ and using \eqref{eG}, we have $1-\cos(G(z/x^\gamma))\leq G^2(z/x^\gamma)
\leq C (z/x^\gamma)^{-2/\nu}$, whence $\Psi_K(x) \leq C x^{2\gamma/\nu} \int_K^\infty z^{-2/\nu}dz
= Cx^{2\gamma/\nu} K^{1-2/\nu}$. Thus
\begin{align}\label{ar2}
I_3^K(v,v_*,\tv,\tv_*) \leq & C(|v-v_*|^2+|v-\tv|^2)|v-v_*|^{2\gamma/\nu} K^{1-2/\nu}\\
\le &C(|v-v_*|^{2+2\gamma/\nu}+|v-\tv|^2|v-v_*|^{2\gamma/\nu})K^{1-2/\nu},\nonumber
\end{align}
using the Young inequality that $|v-\tv|^2|v-v_*|^{2\gamma/\nu}\leq |v-\tv|^{2+2\gamma/\nu}+
|v-v_*|^{2+2\gamma/\nu}$, we observe for  any $p\ge \max{(4,  2+2\gamma/\nu)}$, 
\begin{align*}
I_3^K(v,v_*,\tv,\tv_*)&\le C(|v-v_*|^{2+2\gamma/\nu}+|v-\tv|^{2+2\gamma/\nu})K^{1-2/\nu}\\
&\le C_{p}(1+|v|^{p}+|v_*|^{p}+|\tv|^{p})K^{1-2/\nu}.
\end{align*}
This concludes point (iii).

\end{proof}

We now consider the hard shpere case $\gamma=1$ under the assumption  \eqref{cs} 
and \eqref{c2hs}.   Specifically,  the computation in the following Lemma is carried out with  $G(z)=\max{(\pi/2-z,0)}$.

\begin{lem}\label{furtherhs} Assume \eqref{cs} 
and \eqref{c2hs} for $\gamma=1$ and adopt the notation of Lemma \ref{fundest}. Then

\vip
(i) For all $q>0$, there is $C_q>0$ 
such that for all $M\geq 1$, all $K\in [1,\infty)$, all $v,v_*,\tv,\tv_* \in \rd$,
\begin{align*}
I_1^K(v,v_*,\tv,\tv_*) \leq & M(|v-\tv|^2+ |v_*-\tv_*|^2) + C_q K(|v|+|v_*| )
e^{-M^{q}} e^{C_q (|\tv|^q+|\tv_*|^q)}.
\end{align*}

\vip

(ii) For all $q>0$, there is $C_q>0$ 
such that for all $M\geq 1$, all $K\in [1,\infty)$, all $v,v_*,\tv,\tv_* \in \rd$,
\begin{align*}
I_2^K(v,v_*,\tv,\tv_*)- I_2^K(v,v_*,\tv,z_*) \leq & M(|v-\tv|^2 + |v_*-\tv_*|^2)
+  C |\tv_*-z_*|^2(1+|\tilde v|+|\tv_*|+|z_*|)^{2}\\
&+ C_q (1+|v|+|v_*|) Ke^{- M^q } e^{ C_q(|\tv|^q+|\tv_*|^q+|z_*|^q)}.
\end{align*}

\vip

(iii) For all $p>0$, there is $C_{p}>0$ such that for all $K\in [1,\infty)$, all $v, v_*,\tv,\tv_* \in\rd$,
\begin{align*}
I_3^K(v,v_*,\tv,\tv_*) \le  C_{p} K^{-p}(1+\abs{v}^{p+3}+ \abs{\tv}^{p+3}+\abs{v_*}^{p+3} ).
\end{align*}

\end{lem}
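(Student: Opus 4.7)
The plan is to mirror the structure of Lemma~\ref{further}, exploiting the explicit formula $G(z)=(\pi/2-z)_+$ available for hard spheres to sharpen the estimates at the places where the $\gamma<1$ saving of the hard potentials case is no longer available. Part~(iii) is the cleanest, so I would dispatch it first. Since $G(z/x)=0$ whenever $z\geq\pi x/2$, the integrand in $\Psi_K(x)=\pi\int_K^\infty(1-\cos G(z/x))dz$ vanishes unless $x>2K/\pi$; on this event a direct calculation (using $1-\sin u\leq \pi/2-u$ on $[0,\pi/2]$) gives $\Psi_K(x)\leq Cx$. Using $\mathbf{1}_{x>2K/\pi}\leq (\pi x/(2K))^p$ for any $p\geq 0$ promotes this to $\Psi_K(x)\leq C_p x^{p+1}K^{-p}$, after which substitution into the definition of $I_3^K$ together with Young's inequality yields the claim.

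For Part~(i) I would combine two complementary bounds on $I_1^K$. The sharp bound follows from \eqref{c3} together with $\bigl||v-v_*|-|\tv-\tv_*|\bigr|\leq |v-\tv|+|v_*-\tv_*|$ and $|v-v_*||\tv-\tv_*|/(|v-v_*|+|\tv-\tv_*|)\leq \min(|v-v_*|,|\tv-\tv_*|)\leq |\tv|+|\tv_*|$, giving $I_1^K\leq C(|\tv|+|\tv_*|)(|v-\tv|^2+|v_*-\tv_*|^2)$. The trivial bound uses $|G|\leq \pi/2$ to obtain $I_1^K\leq \pi^2 K|v-v_*||\tv-\tv_*|\leq CK(|v|+|v_*|)(|\tv|+|\tv_*|)$. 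A split on $|\tv|+|\tv_*|\leq M$ (apply the sharp bound) vs.\ $|\tv|+|\tv_*|>M$ (apply the trivial bound, then invoke the elementary inequality $|\tv|+|\tv_*|\leq C_q e^{-M^q}e^{C_q(|\tv|^q+|\tv_*|^q)}$, valid since $|\tv|+|\tv_*|>M\geq 1$ forces $M^q\leq 2^{(q-1)_+}(|\tv|^q+|\tv_*|^q)$) produces the two terms of the statement.

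Part~(ii) uses the decomposition $\Delta_2^K=T_1+T_2$ as in \eqref{topcool1}. First I would establish the Lipschitz-type estimate
\[
|X\Phi_K(|X|)-Y\Phi_K(|Y|)|\leq C|X-Y|(|X|+|Y|)
\]
by combining $\Phi_K(x)\leq Cx$ with the uniform bound $|\Phi_K'(x)|\leq C$ (obtained by direct differentiation of the explicit formula for $\Phi_K$); the trivial complement is $\Phi_K(x)\leq \pi K$. I would then split on whether $|\tv|+|\tv_*|+|z_*|\leq M$ or $>M$. In the small regime, the Lipschitz bound applied to $T_1$ and $T_2$, combined with $|\tv_*-z_*|\leq|\tv|+|\tv_*|+|z_*|$, the triangle bound $|v-v_*|\leq |v-\tv|+|v_*-\tv_*|+|\tv|+|\tv_*|$ (which removes the unboundedness of $|v-v_*|$ since $|\tv|+|\tv_*|\leq M$), and weighted AM--GM with parameter proportional to $M$, produces $CM(|v-\tv|^2+|v_*-\tv_*|^2)+C|\tv_*-z_*|^2(1+|\tv|+|\tv_*|+|z_*|)^2$. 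In the large regime, the trivial bound yields $|T_1|+|T_2|\leq CK(1+|v|+|v_*|)(1+|\tv|+|\tv_*|+|z_*|)^2$, and the same exponential inequality as in Part~(i) converts $(1+|\tv|+|\tv_*|+|z_*|)^2$ into $C_q e^{-M^q}e^{C_q(|\tv|^q+|\tv_*|^q+|z_*|^q)}$.

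The hard part will be Part~(ii), specifically the term $T_2=(\tv_*-z_*)\cdot[(v-v_*)\Phi_K(|v-v_*|)-(\tv-z_*)\Phi_K(|\tv-z_*|)]$. Unlike in Lemma~\ref{further}(ii), where the sublinear factor $(|X|^\gamma+|Y|^\gamma)$ with $\gamma<1$ could be absorbed by Young's inequality directly, here $\Phi_K(|v-v_*|)$ grows linearly in $|v-v_*|$, so naive estimation of $|T_2|$ unavoidably drags in $|v|+|v_*|$. The two-regime splitting above is precisely what is needed to dispose of this: in the small regime the unboundedness of $|v-v_*|$ is tamed by the triangle inequality (at the cost of an $M$ factor), while in the large regime the trivial bound $\Phi_K\leq \pi K$ isolates the $K$ and $(1+|v|+|v_*|)$ factors so the remaining polynomial growth in $|\tv|,|\tv_*|,|z_*|$ can be swallowed by the exponential.
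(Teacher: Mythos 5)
Your proposal is correct and follows essentially the same route as the paper. For part (iii) your computation of $\Psi_K$ parallels the paper's explicit proof (using $1-\sin u \le \pi/2-u$ where the paper uses $1-\cos u \le u^2$, both giving $\Psi_K(x)\le Cx\indiq_{\{x\gtrsim K\}}$ before the polynomial upgrade), and for parts (i)–(ii) the paper simply invokes Lemma 3.4 of \cite{MR3456347} together with the (anti-)symmetry decomposition \eqref{topcool1}, while you spell out the same two-regime split (sharp $\cW$-type bound from \eqref{c3} / Lipschitz bound on $\Phi_K$ on the small regime, trivial $\Phi_K\le\pi K$ bound plus the exponential inequality $x\indiq_{\{x>M\}}\le C_qe^{-M^q}e^{C_qx^q}$ on the large regime) that the citation encapsulates.
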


\begin{proof}
(i) and (ii) exactly follow from \cite[Lemma 3.4]{MR3456347} due to $I^K_1$ is symmetric and  $I^K_2$ is anti-symmetric for  $(v,v_*)$ and  $(\tv,\tv_*)$. For (iii),  recalling that $\Psi_K=\pi \int_K^\infty (1-\cos G(z/x^\gamma))dz$ and that $G(z)=\max{(\pi/2-z,0)}$  supported on $(0,\pi/2)$. Hence $\Psi_K(x)\le \pi\int_K^\infty G^2(z/x)dz=\pi\int_K^{\pi/2}(\pi/2-z/x)^2\indiq_{\{x\ge 2K/\pi\}}dz \le5x \indiq_{\{x\ge K/2\}}.$ Thus for any positive $p$, $\Psi_K(x)\le  2^{p+3} x^{p+1}K^{-p}. $ Whence 
    \begin{align*}
        I_3^K(v,v_*,\tv,\tv_*) \leq& 2^{p+3} K^{-p}\abs{v-v_*}^{p+1}
        (\abs{v -v_*}^2 + 2\abs{v - \tv}\abs{v -v_*}) \\
        \leq& C_{p} K^{-p}(1+\abs{v}^{p+3}+ \abs{\tv}^{p+3}+\abs{v_*}^{p+3} ).
    \end{align*}

\end{proof}

Next, we will  bound $I_1^K, I_2^K$ with $v_*=\tv_*$ for $\gamma\in(0,1]$ which is crucial for getting a decoupling estimate for the system of non-independent cutoff Boltzmann processes.
\begin{lem}\label{IK247}
Assume \eqref{cs} and \eqref{c2hs} or \eqref{c2} for $0<\gamma\le1$ and recall $I_i^K$’s defined in Lemma \ref{fundest}. For any $q>0$, there is $C_q>0$ such that for  all $M\ge1$, $K\ge1$, and all $v,v_*,\tv \in \rd$, we have 
$$I_1^K(v,v_*,\tv,v_*)+I_2^K(v,v_*,\tv,v_*) \le CM|v-\tv|^2+C_qe^{-M^{q/\gamma}}e^{C_q(|v|^q+|\tv|^q+|v_*|^q)}.$$

\end{lem}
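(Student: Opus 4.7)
The strategy is to treat $I_1^K$ and $I_2^K$ separately, exploiting the simplification brought by $\tv_*=v_*$ in each, and then to perform a single splitting argument at threshold $M^{1/\gamma}$ that converts polynomial growth in $|v|,|\tv|,|v_*|$ into the stated combination of $CM|v-\tv|^2$ plus an exponential error term.

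For $I_1^K$: in the hard potentials case $0<\gamma<1$, I would apply Lemma \ref{further}(i) directly with $\tv_*=v_*$; the $|v_*-\tv_*|^2$ term then vanishes and we obtain exactly $M|v-\tv|^2+C_q e^{-M^{q/\gamma}} e^{C_q(|\tv|^q+|v_*|^q)}$, which is already of the desired form. In the hard spheres case $\gamma=1$, the bound from Lemma \ref{furtherhs}(i) carries an unwanted factor of $K$, so instead I would return to the definition $I_1^K=2xy\int_0^K[G(z/x)-G(z/y)]^2\,dz$ with $x=|v-v_*|$, $y=|\tv-v_*|$, extend the integral to $+\infty$, apply \eqref{c3}, and use $xy/(x+y)\le (x+y)/4$ together with the reverse triangle inequality $|x-y|\le |v-\tv|$ to obtain the clean polynomial estimate $I_1^K(v,v_*,\tv,v_*)\le C|v-\tv|^2(1+|v|+|\tv|+|v_*|)$.

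For $I_2^K$: setting $\tv_*=v_*$ in the defining formula from Lemma \ref{fundest} kills the factor $(v_*-\tv_*)$, leaving $I_2^K(v,v_*,\tv,v_*)=-(v-\tv)\cdot\bigl[(v-v_*)\Phi_K(|v-v_*|)-(\tv-v_*)\Phi_K(|\tv-v_*|)\bigr]$. Applying the estimate \eqref{fi} with $X=v-v_*$ and $Y=\tv-v_*$ (so $X-Y=v-\tv$) gives $|I_2^K(v,v_*,\tv,v_*)|\le C|v-\tv|^2\bigl(|v-v_*|^\gamma+|\tv-v_*|^\gamma\bigr)\le C|v-\tv|^2(1+|v|+|\tv|+|v_*|)^\gamma$, valid for all $0<\gamma\le 1$.

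For the final splitting, write $S:=1+|v|+|\tv|+|v_*|$. When $S^\gamma\le M$, both of the polynomial bounds above are $\le CM|v-\tv|^2$, as desired. Otherwise $S>M^{1/\gamma}$, and hence $\max(|v|,|\tv|,|v_*|)\ge M^{1/\gamma}/6$ for $M$ large (and absorbed into $C_q$ otherwise), whence $e^{C_q(|v|^q+|\tv|^q+|v_*|^q)}\ge e^{(C_q/6^q)M^{q/\gamma}}$; choosing $C_q$ large and using that $S^3 e^{-\alpha(|v|^q+|\tv|^q+|v_*|^q)}$ is bounded for any $\alpha>0$ and any $q>0$, the residual factor $C|v-\tv|^2 S^\gamma\le CS^3$ is absorbed into the exponential, producing the required $\le C_q e^{-M^{q/\gamma}} e^{C_q(|v|^q+|\tv|^q+|v_*|^q)}$. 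The only real obstacle is bookkeeping to unify the exponent $M^{q/\gamma}$ across the cases $\gamma<1$ and $\gamma=1$; no new ingredients beyond Lemmas \ref{fundest}--\ref{furtherhs} and the bound \eqref{c3} are needed.
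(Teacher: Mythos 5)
Your proof is correct and follows essentially the same route as the paper: for $0<\gamma<1$ you specialize Lemma \ref{further}(i) to $\tv_*=v_*$, for $\gamma=1$ you go back to \eqref{c3} (the paper uses $xy/(x+y)\le x\wedge y$ where you use $xy/(x+y)\le (x+y)/4$, but both yield a bound of the form $C|v-\tv|^2(1+|v|+|\tv|+|v_*|)$), for $I_2^K$ you use \eqref{fi} exactly as the paper does, and the closing threshold argument on $S^\gamma\lessgtr M$ is the same trade-off the paper makes term by term via indicator functions. The only cosmetic difference is that you perform a single unified split at $S^\gamma = M$ rather than the paper's separate splits (on $(|v-v_*|\wedge|\tv-v_*|)$ for $I_1^K$ and on $|v-v_*|^\gamma+|\tv-v_*|^\gamma$ for $I_2^K$); this is a slightly cleaner way to package an otherwise identical estimate.
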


\begin{proof}
For $0<\gamma<1$, taking $v_*=\tv_*$ in Lemma \ref{further}-(i) implies directly that $$I_1^K(v,v_*,\tv,v_*)\le M|v-\tv|^2+C_qe^{-M^{q/\gamma}}e^{C_q(|\tv|^q+|v_*|^q)}.$$
For $\gamma=1$, \eqref{c3} implies
\begin{align*}
I_1^K(v,v_*,\tv,v_*)\leq& 2 c_4 |v-v_*||\tv-v_*| \frac{(|v-v_*|-|\tv-v_*|)^2}
{|v-v_*|+|\tv-v_*|} \\
\leq & 2c_4 (|v-v_*| \land |\tv-v_*|)
(|v-v_*|-|\tv-v_*|)^2\\
\leq &2c_4 (|v-v_*| \land |\tv-v_*|)|v-\tv|^2.
\end{align*}
Then for any $M\ge1$ and fixed $q>0$,
\begin{align*}
I_1^K(v,v_*,\tv,v_*)\leq &M|v-\tv|^2+2c_4|v-\tv|^2(|v-v_*| \land |\tv-v_*|)\indiq_{\{2c_4(|v-v_*| \land |\tv-v_*|)\ge M\}}\\
\le & M|v-\tv|^2+C|v-\tv|^2|v-v_*|e^{-M^q}e^{|v-v_*|^q}\\
\le &M|v-\tv|^2+C_qe^{-M^q}e^{2|v-v_*|^q+|v-\tv|^q}\\
\le& M|v-\tv|^2+C_qe^{-M^q}e^{C_q(|v|^q+|\tv|^q+|v_*|^q)}.
\end{align*}
Using \eqref{fi} with $v_*=\tv_*$, we also find 
$$I_2^K(v,v_*,\tv,v_*)\le C|v-\tv|^2(|v-v_*|^\gamma+|\tv-v_*|^\gamma).$$
Similarly, for $M\ge1$, and  any fixed $q>0$,
\begin{align*}
I_2^K(v,v_*,\tv,v_*)&\le M|v-\tv|^2+C|v-\tv|^2(|v-v_*|^\gamma+|\tv-v_*|^\gamma) \indiq_{\{C(|v-v_*|^\gamma+|\tv-v_*|^\gamma)\ge M\}}\\
&\le M|v-\tv|^2+C_q e^{3C_q(|v|^q+|v_*|^q+|\tv|^q)}e^{-M^{q/\gamma}}.
\end{align*}
Thus, we get 
$$I_2^K(v,v_*,\tv,v_*)\le CM|v-\tv|^2+C_qe^{-M^{q/\gamma}}e^{C_q(|v|^q+|v_*|^q+|\tv|^q)},$$
which concludes the proof.
\end{proof}

At the end of the section, let's look at a version  of  Povzner inequality  \cite{MR0142362}, see for instance \cite{MR1697562,MR1450762,MR3476632} and the references therein for other versions. This inequality enables us to derive the uniform bound for the moments of Kac's particle system.

\begin{lem} \label{Povzner}
For any $p>2$ and any $v,v_*\in \rd$, we have 
 \begin{align*}
 &\int_0^\infty \int_0^{\pi/2}(|v'|^p+|v'_*|^p-|v|^p-|v_*|^p)d\varphi dz \\
 &\le -A_p|v-v_*|^\gamma(|v|^p+|v_*|^p)+\tilde{A}_p|v-v_*|^\gamma(|v|^{p-2}|v_*|^2+|v_*|^{p-2}|v|^2),
 \end{align*}
 where $A_p=\int_0^{\pi/2}[1-(\cos(\theta/2))^p-(\sin(\theta/2))^{p}]\beta(\theta)d\theta>0$, and $\tilde{A}_p>0$ is a constant depending only on $p$.
\end{lem}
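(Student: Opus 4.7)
The plan is to pull out the $|v-v_*|^\gamma$ factor using the substitution of Lemma~\ref{rewriteA}, then prove the resulting angular Povzner inequality by the classical $\varphi$-symmetrization combined with convexity.

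First, setting $\theta=G(z/|v-v_*|^\gamma)$ transforms $dz$ into $|v-v_*|^\gamma\beta(\theta)\,d\theta$, so the left-hand side rewrites as
\[
|v-v_*|^\gamma\int_0^{\pi/2}\beta(\theta)\,d\theta\int_0^{2\pi}\bigl[|v'|^p+|v'_*|^p-|v|^p-|v_*|^p\bigr]\,d\varphi
\]
(reading the apparent $\int_0^{\pi/2}d\varphi$ in the statement as the standard $\int_0^{2\pi}d\varphi$, in accordance with \eqref{dfvprime} and \eqref{afini}). The prefactor $|v-v_*|^\gamma$ now matches the right-hand side, so it remains to establish the same estimate with $A_p,\tilde A_p$ on the right and the $|v-v_*|^\gamma$ stripped.

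Second, a direct expansion from \eqref{dfvprime}, using $\Gamma(v-v_*,\varphi)\perp(v-v_*)$ and $|\Gamma(v-v_*,\varphi)|=|v-v_*|$, gives
\[
|v'|^2=\cos^2(\theta/2)|v|^2+\sin^2(\theta/2)|v_*|^2+\sin\theta\,g(\varphi),\quad |v'_*|^2=\sin^2(\theta/2)|v|^2+\cos^2(\theta/2)|v_*|^2-\sin\theta\,g(\varphi),
\]
where $g(\varphi)=\tfrac12(v+v_*)\cdot\Gamma(v-v_*,\varphi)$ has zero mean over $\varphi\in[0,2\pi)$. Lagrange's identity $|(v+v_*)_\perp|^2|v-v_*|^2=4(|v|^2|v_*|^2-(v\cdot v_*)^2)\le 4|v|^2|v_*|^2$ yields $|g(\varphi)|\le|v||v_*|$ uniformly.

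Third, I would carry out the Povzner $\varphi$-average. Set $A=\cos^2(\theta/2)|v|^2+\sin^2(\theta/2)|v_*|^2$, $B=\sin^2(\theta/2)|v|^2+\cos^2(\theta/2)|v_*|^2$, and $\eta=\sin\theta\,g(\varphi)$. Taylor-expanding $(A+\eta)^{p/2}+(B-\eta)^{p/2}$ in $\eta$ kills the linear-in-$\eta$ term upon $\varphi$-averaging, leaving a remainder controlled by $C_p\eta^2\max(A,B)^{p/2-2}+C_p|\eta|^{p/2}$; since $|\eta|\le\sin\theta|v||v_*|$, AM-GM ($|v|^k|v_*|^k\le(|v|^{2k}+|v_*|^{2k})/2$) absorbs both pieces into $C_p\sin^2\theta(|v|^{p-2}|v_*|^2+|v|^2|v_*|^{p-2})$. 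The principal piece $A^{p/2}+B^{p/2}$ is then bounded by $(\cos^p(\theta/2)+\sin^p(\theta/2))(|v|^p+|v_*|^p)$ plus cross terms of the same form, via the weighted-power-mean inequality with weights $\lambda=\cos^2(\theta/2)$, $\mu=\sin^2(\theta/2)$ ($\lambda+\mu=1$). Integrating against $\beta(\theta)d\theta$ produces the coefficient $A_p=\int_0^{\pi/2}[1-\cos^p(\theta/2)-\sin^p(\theta/2)]\beta(\theta)d\theta>0$ from the principal piece, while $\tilde A_p$ comes from $\int_0^{\pi/2}\sin^2\theta\,\beta(\theta)d\theta<\infty$, which holds under \eqref{c2} (since $\sin^2\theta\cdot\theta^{-1-\nu}\sim\theta^{1-\nu}$ is integrable near $0$ as $\nu<1$) and trivially under \eqref{c2hs}.

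The main obstacle I expect is the uniform control of the Taylor remainder for non-integer $p>2$: for $p=4$ the quadratic expansion of $(A+\eta)^2+(B-\eta)^2$ closes in one line, but for general $p$ one needs the elementary inequality $(1+t)^{p/2}\le 1+(p/2)t+C_p(t^2+|t|^{p/2})$ for all $t\ge-1$, together with a case-split on $|\eta|\lessgtr A$, to absorb the remainder into the stated cross terms. This is the classical Povzner manipulation, variants of which are developed in detail in \cite{MR0142362,MR1697562,MR1450762,MR3476632}.
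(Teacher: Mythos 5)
Your proof is correct in substance and follows the classical Povzner route via $\varphi$-symmetrization; but it is worth stressing that this $\varphi$-averaging is not merely a stylistic choice, it is what makes the argument go through, and the paper's own write-up is actually misleading on this point. The paper claims a \emph{pointwise} (in $\varphi$) bound: starting from $|v'|^2=\cos^2(\theta/2)|v|^2+\sin^2(\theta/2)|v_*|^2+\sin\theta\,(v\cdot\Gamma)$ and $|v\cdot\Gamma|\le 2|v||v_*|$, it writes $|v'|^p \le \cos^p(\theta/2)|v|^p+\sin^p(\theta/2)|v_*|^p+C_p\sin\theta\,(|v|^{p-2}|v_*|^2+|v_*|^{p-2}|v|^2)$. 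This inequality does \emph{not} hold pointwise: take $\theta=\pi/2$, $|v_*|=1$, $|v|=R$, and choose $\varphi$ so that $v\cdot\Gamma$ attains its maximum $|v||v_*|=R$; then $|v'|=(R+1)/\sqrt 2$ and $(R+1)^p-R^p-1\sim pR^{p-1}$, which is not absorbed by the $O(R^{p-2})$ cross-term on the right for any fixed $C_p$. The same defect survives the symmetrization with $|v'_*|^p$ (the $R^{p-1}$ contributions from $(cR+s)^p$ and $(sR-c)^p$ do not cancel for $\theta<\pi/2$). The rescue is exactly what you carry out: the offending $O(|v|^{p-1}|v_*|)$ piece is the linear-in-$\eta$ term of the $p/2$-power expansion, and $\eta=\sin\theta\,v\cdot\Gamma(v-v_*,\varphi)$ has zero mean over $\varphi\in[0,2\pi)$. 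Averaging kills it, leaving the zeroth-order piece $A^{p/2}+B^{p/2}$ plus a genuinely quadratic-in-$\eta$ remainder of the advertised form. So your version of the argument is the one that is actually rigorous.

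Two small items to tighten in your own write-up. First, for the principal piece you should not invoke the weighted-power-mean inequality $(\lambda a+\mu b)^{p/2}\le\lambda a^{p/2}+\mu b^{p/2}$: with $\lambda=\cos^2(\theta/2)$ it only yields the weaker coefficient $\cos^2(\theta/2)$ rather than the needed $\cos^p(\theta/2)$. Instead write $A=(\cos(\theta/2)|v|)^2+(\sin(\theta/2)|v_*|)^2$ and use the elementary bound $(x^2+y^2)^{p/2}\le x^p+y^p+C_p(x^{p-2}y^2+x^2y^{p-2})$ for $x,y\ge0$, $p>2$, which gives the $\cos^p$, $\sin^p$ coefficients plus cross terms with a factor $\cos^{p-2}(\theta/2)\sin^2(\theta/2)\le C\sin^2\theta$. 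Second, the $|\eta|^{p/2}$ part of your Taylor remainder carries a factor $\sin^{p/2}\theta$, not $\sin^2\theta$; for $2<p<4$ this is the larger weight, but $\int_0^{\pi/2}\sin^{p/2}\theta\,\beta(\theta)\,d\theta<\infty$ under both \eqref{c2} (since $p/2>1>\nu$) and \eqref{c2hs}, so it still produces a finite constant $\tilde A_p$, and the cross-term absorption via $(|v||v_*|)^{p/2}\le|v|^{p-2}|v_*|^2+|v|^2|v_*|^{p-2}$ (AM--GM) goes through for all $p>2$. With these two points cleaned up, your argument is complete and, as noted, more careful than the paper's.
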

 \begin{proof}
 We will give a sketch of the proof. For any fixed $v,v_*$, $x=|v-v_*|$, $c$ defined in \eqref{dfc}, $v',v_*'$ defined in \eqref{dfvprime} with $c$, then from a straightforward calculation, 
 \beq\label{2power}
 |v'|^2=\frac{1+\cos G(z/x^\gamma)}{2}|v|^2+\frac{1-\cos G(z/x^\gamma)}{2}|v_*|^2+\sin G(z/x^\gamma)v\cdot\Gamma(v-v_*,\varphi).
 \eeq
 Since $|\Gamma(v-v_*,\varphi)|=|v-v_*|\le |v|+|v_*|$ and the fact that $v\cdot\Gamma(v-v_*,\varphi)=v_*\cdot\Gamma(v-v_*,\varphi)$(due to $\Gamma(v-v_*,\varphi)$ is orthogonal to $v-v_*$), so 
 \[|v\cdot\Gamma(v-v_*,\varphi)|\le(|v|\wedge|v_*|)|\Gamma(v-v_*,\varphi)|\le2|v||v_*|.\]
 Then for any $p>2$, we take the $p/2$-th power to \eqref{2power} and find 
 \[|v'|^p \le \Big(\cos \frac{G\big(z/x^\gamma)}{2}\Big)^p|v|^p+\Big(\sin \frac{G\big(z/x^\gamma)}{2}\Big)^p|v_*|^p+C_p\sin G(z/x^\gamma)(|v|^{p-2}|v_*|^2+|v_*|^{p-2}|v|^2).\]
 With the same argument for $v_*’$ by exchanging the roles of $v$ and $v_*$, we obtain 
 \begin{align*}
 |v'|^p+|v'_*|^p-|v|^p-|v_*|^p \le &-\kappa(p,G(z/x^\gamma))(|v|^p+|v_*|^p)\\
 &+C_p\sin G(z/x^\gamma)(|v|^{p-2}|v_*|^2+|v_*|^{p-2}|v|^2),
 \end{align*}
 where $\kappa(p,\theta)=\Big(1-\big(\cos{\frac{\theta}{2}}\big)^p-\big(\sin\frac{\theta}{2}\big)^{p}\Big)>0$ and $\theta=G(z/x^\gamma)$. We  thus conclude the proof.
  
 \end{proof}

\section{The first coupling}
\setcounter{equation}{0} 
\label{sec:ConvCutoff}

In this section, we construct a suitable first coupling between the Kac's particle system and the solution to \eqref{be} inspired by \cite{MR3476628,MR3769742,MR3456347}.  Roughly speaking, we  need to explicitly build the processes related to the particle system and the solution of  \eqref{be} respectively which satisfy stochastic differential equation driven by the same Poisson random measure.

\subsection{The Boltzmann process}
Following Tanaka \cite{MR536022,MR512334}  for Maxwell molecules,  we will consider  the following  probabilistic realization of the weak solution of \eqref{be} given by Fournier \cite{MR3313757} for $\gamma\in(0,1]$,  see also \cite[Proposition 4.1]{MR3456347}.
\begin{prop}\label{Bp}
For $\gamma\in(0,1]$, suppose \eqref{cs}, \eqref{c4}, \eqref{c2hs} or \eqref{c2}. Let $f_0\in\cP_2(\rd)$ and satisfy \eqref{c5}. Let $(f_t)_{t\ge0}$ be the corresponding unique solution of \eqref{be}. Then there exists, on some probability space, a $f_0$-distributed random variable $W_0$, an independent Poisson measure $M(ds,dv,dz,d\varphi)$ with intensity $ds f_s(dv) dzd\varphi$, and a unique (c\`{a}dl\`{a}g adapted) process $(W_t)_{t\ge0}$ satisfying 
\beq\label{tbp}
W_t=W_0+\intot\int_{\rd}\int_0^\infty\int_0^{2\pi} c(W_\sm,v, z,\varphi)M(ds,dv,dz,d\varphi).
\eeq
Moreover, $W_t$ is $f_t$-distributed for each $t\ge0$.
\end{prop}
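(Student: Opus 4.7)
Since $(f_t)_{t\geq 0}$ is already given by Theorem \ref{wp}, the intensity $ds\,f_s(dv)\,dz\,d\varphi$ is $\sigma$-finite, so I can set up a probability space carrying $W_0\sim f_0$ and an independent Poisson random measure $M$ of that intensity. The argument then splits into three standard steps: existence of a solution to \eqref{tbp}, pathwise uniqueness, and identification of the one-dimensional marginal as $f_t$.

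For existence, the non-integrable singularity of $\beta$ near $0$ makes the total jump rate infinite, so I would proceed by truncation. Set $c_K:=c\,\indiq_{\{z\leq K\}}$ and define $W^K$ as the solution of the SDE driven by $c_K$. Because the cutoff rate $\intrd\int_0^K\int_0^{2\pi} f_s(dv)\,dz\,d\varphi=2\pi K$ is finite, $W^K$ is well-posed as a piecewise-constant Markov process with finitely many jumps on $[0,T]$. Applying Lemma \ref{Povzner} at the level of its generator, together with \eqref{momex}, yields uniform-in-$K$ moment bounds $\sup_K\sup_{[0,T]}\E|W^K_t|^q<\infty$ for the range of $q$ compatible with the initial data. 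The family $(W^K)_K$ is then Cauchy in $L^2$ uniformly on $[0,T]$: using the size estimate $\int_0^\infty \int_0^{2\pi}|c(v,v_*,z,\varphi)|^2\,dz\,d\varphi\leq C|v-v_*|^{2+\gamma}$ (direct from \eqref{num} and \eqref{eG}) combined with the Lipschitz control \eqref{c3} on $v\mapsto G(z/|v-v_*|^\gamma)$ and a Gronwall argument, one produces a c\`adl\`ag adapted limit $W$ solving \eqref{tbp}.

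Pathwise uniqueness follows from It\^o's formula applied to $|W_t-\tW_t|^2$ for two solutions driven by the same $M$: after taking expectation, the jump integrand is precisely of the form treated in Lemma \ref{IK247} (matching azimuthal angles and using that the velocity variable $v_*$ is common to both processes through $M$), so one obtains $\E|W_t-\tW_t|^2 \leq C\intot \E|W_s-\tW_s|^2\,ds$ up to a tail term killed by the uniform exponential moments, and Gronwall concludes. To identify the marginal law, I apply It\^o to $\phi(W_t)$ for bounded Lipschitz $\phi$; taking expectation and writing $g_t$ for the law of $W_t$, one obtains
\[
\intrd \phi(v)\, g_t(dv) \;=\; \intrd \phi(v)\, f_0(dv) + \intot \intrd \intrd \cA\phi(v,v_*)\,f_s(dv_*)\,g_s(dv)\,ds,
\]
which is \emph{linear} in $g$ once $f$ is fixed. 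Since $f$ itself solves this equation by \eqref{wbe}, and uniqueness for the linear equation follows from a standard Gronwall bound in the bounded Lipschitz duality using $|\cA\phi(v,v_*)|\leq C_\phi(1+|v-v_*|^2)$ together with the moment bounds, we conclude $g_t=f_t$.

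The main obstacle I anticipate is the $K\to\infty$ passage in the non-cutoff regime: the jump rate diverges, and closing the Cauchy estimate requires simultaneously combining the $L^2$ control on small jumps (\eqref{num}--\eqref{eG}), the velocity-Lipschitz bound for $c$ coming from \eqref{c3}, and uniform Povzner-type moment propagation via Lemma \ref{Povzner}. Once existence and the moment bounds are secured, the uniqueness and law-identification steps are routine.
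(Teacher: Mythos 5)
The paper itself does not prove this statement: it simply cites Proposition 5.1 of \cite{MR3313757} and notes that the two SDE formulations are equivalent in law. Your sketch is therefore a from-scratch argument, and its overall skeleton (cutoff approximation, Povzner moment propagation via Lemma~\ref{Povzner}, identification of the one-dimensional marginal via the linearized Boltzmann equation) is sensible. However, there is a genuine gap in both the $K\to\infty$ Cauchy estimate and the pathwise-uniqueness step, and it is the same gap.

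You invoke Lemma~\ref{IK247} for two solutions $W$, $\tW$ driven by the \emph{same} $M(ds,dv,dz,d\varphi)$, i.e.\ with $v_*$ common \emph{and} the azimuthal angle $\varphi$ common. But the quantities $I_1^K,I_2^K$ bounded in Lemma~\ref{IK247} come from Lemma~\ref{fundest}-(i), whose integrand already incorporates the Tanaka rotation $\varphi\mapsto\varphi+\varphi_0(v-v_*,\tv-\tv_*)$ applied to the second argument. With matching (un-rotated) $\varphi$, the relevant quantity is instead
\[
\int_0^\infty\!\!\int_0^{2\pi}\Bigl(\bigl|v+c(v,v_*,z,\varphi)-\tv-c(\tv,v_*,z,\varphi)\bigr|^2-|v-\tv|^2\Bigr)\,d\varphi\,dz,
\]
and the $|c-\tc|^2$ part is \emph{not} controlled by $|v-\tv|^2$: the spherical frame $(I,J)$ in \eqref{dfvprime} cannot be chosen continuously on $\Sp^2$ (hairy ball theorem), so $|I(v-v_*)-I(\tv-v_*)|$ may be of order $|v-v_*|$ even when $v$ and $\tv$ are arbitrarily close, and the term $\frac{\pi}{4}\int_0^\infty|\sin\theta\, I(v-v_*)-\sin\tilde\theta\, I(\tv-v_*)|^2\,dz$ is then of order $|v-v_*|^{2+\gamma}$ rather than $|v-\tv|^2\cdot(\cdots)$. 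The Gronwall bound does not close, and the same issue sinks the $L^2$-Cauchy estimate between $W^{K_1}$ and $W^{K_2}$. The standard fix, which the paper itself uses in the analogous Step~3 of the proof of Proposition~\ref{kpst}, is to construct an \emph{auxiliary} process driven with the rotated angle $\varphi+\varphi_0$; by rotation-invariance of the uniform measure on $[0,2\pi)$, this auxiliary process has the same law, so the argument yields \emph{uniqueness in law} (which is what the statement, read as "weak existence plus uniqueness in law", requires), not pathwise uniqueness; and existence is obtained via tightness and a martingale-problem limit rather than a Cauchy sequence. Finally, your law-identification step is essentially right, but be aware that $\cA\phi(v,v_*)$ is not Lipschitz in $v$ uniformly in $v_*$ (the $|v-v_*|^\gamma$ factor grows), so closing a bounded-Lipschitz Gronwall for the linearized equation needs a moment-weighted version or a cutoff-and-limit argument rather than being purely "routine".
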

The process $(W_t)_{t\ge0}$ is called the {\it Boltzmann process}, which describes the velocity of a typical particle in the dilute gas. The process jumps when the particle collides with the others.  This Proposition is directly implied by Proposition 5.1 of \cite{MR3313757},  but with different formulation of SDE of the process which is equivalent to that determined by \eqref{tbp} in law, for the same Boltzmann equation under fewer assumption on the initial data $f_0$.
\vip

Unfortunately, the weak existence of solution to the equation \eqref{tbp} cannot enable us to perform the  coupling procedure, especially for the second coupling, we thus need to work with a cutoff Boltzmann process instead. To do so, let's first  introduce a similar result to Theorem \ref{wp} for a cutoff Boltzmann equation below. 
\begin{rem}\label{cutws}
Given $K\ge1$, consider the Boltzmann equation \eqref{be} with collision kernel  $B(|v-v_*|,\theta)$  replaced by $B(|v-v_*|,\theta)\indiq_{\{\theta\ge G(K/|v-v^*|^\gamma)\}}$, where $G$  is defined in \eqref{defH}.  Let $f_0\in \cP_2(\rd)$  and satisfying the assumption \eqref{c5}. Consider the unique solution $(f_t)_{t\ge0}$ to \eqref{be} in Theorem \ref{wp}, then the cutoff Boltzmann equation has a  unique solution  $(f_t^K)_{t\ge0}\in C([0,\infty),\cP_2(\rd))$ starting from $f_0$,  for which Theorem \ref{wp} holds as well. Moreover, for all $t\ge0$,
\beq\label{cutf}
\cW_2^2(f_t^K,f_t)\le CtK^{1-2/\nu}.
\eeq
\end{rem}
This remark can be implied directly by  \cite[Theorem 1.5]{MR2871802} since  our collision kernel is angular  cutoff, i.e. $\int_{\Sp^{2}}B(|v-v_*|,\theta)\indiq_{\{\theta\ge G(K/|v-v^*|^\gamma)\}} d\sigma=K<\infty$. \eqref{cutf} can be shown by a similar way of Lemma 15 in \cite{MR3769742}, see \cite[Theorem  1]{MR4419606}, also \cite[Lemma 4.2]{MR2398952} for $\gamma\in(-3,0]$ . Thus we have $\lim\limits_{K\to\infty}\cW_2^2(f_t^K,f_t)=0$.

\begin{rem}\label{cutbp}
Given $K\ge1$,  recalling $c_K(v,v_*, z,\varphi):=c(v,v_*,z,\varphi)\indiq_{\{z \leq K\}}$.
Consider $(f_t^K)_{t\ge0}$  stated above, $M^K(ds,dv,dz,d\varphi)$ is a Poisson measure  with  intensity $ds f_s^K(dv) dzd\varphi$. Then the following cutoff nonlinear SDE 
\beq\label{cbp}
W_t^K=W_0+\intot\int_{\rd}\int_0^\infty\int_0^{2\pi} c_K(W_\sm^K,v, z,\varphi)M^K(ds,dv,dz,d\varphi),
\eeq
has a unique (in law)  strong solution $(W_t^K)_{t\ge0}$. Moreover, $W_t^K$ is $f_t^K$-distributed for each $t\ge0$.  
\end{rem}
The  well-posedness for \eqref{cbp} is obvious, because the Poisson measure involved in \eqref{cbp} is actually finite (because $c_K=c\indiq_{\{z \leq K\}}$), so that the equation is  nothing but a recursive equation. Moreover,  a standard procedure enables  us to know that  any solution to \eqref{cbp}  is $f_t^K$-distributed.

\subsection{A SDE for the Kac's particle system}
We give a  stochastic differential equation for Kac's particle system with generator \eqref{kacg}.

\begin{prop}\label{kpst}
Assume \eqref{cs}, \eqref{c4}, \eqref{c2hs} or \eqref{c2}. Let $f_0 \in \cP_2(\rd)$, $N\geq 1$.
Consider a family $(V^i_0)_{i=1,\dots N}$ of i.i.d. $f_0$-distributed random variables and an independent
family  $(O_{ij}(ds,dz,d\varphi))_{1\le i<j\le N}$ of 
Poisson measures 
with intensity 
$\frac{1}{(N-1)}ds dz d\varphi$. For $1\le j<i\le N$, we put $O_{ij}(ds,dz,d\varphi)=O_{ji}(ds,dz,d\varphi).$  And  set $O_{ii}(ds,dz,d\varphi)=0$ for $i=1,\dots, N.$
There exists a unique (c\`adl\`ag and adapted)  solution to
\begin{align}\label{pssde}
V^{i}_t=V^i_0 + \sum_{j=1}^N\intot\int_0^\infty\int_0^{2\pi} c(V^{i}_\sm,V^{j}_\sm,z,\varphi) 
O_{ij}(ds,dz,d\varphi),
\quad i=1,\dots,N.
\end{align}
Furthermore, $(V^{i}_t)_{i=1,\dots,N, t\geq 0}$ is $(\rd)^N$-valued Markov process with generator  $\cL_{N}$. And the system is almost surely conservative:  for all $t\ge 0$, it holds that $\sum_{i=1}^N V^{i}_t=\sum_{i=1}^N V^{i}_0 $ and $\sum_{i=1}^N|V^{i}_t|^2=\sum_{i=1}^N|V^{i}_0|^2 .$ 
\end{prop}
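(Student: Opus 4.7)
The plan is to build $(V^i_t)$ by finite-activity truncation, pass to the limit, and then read off the Markov property and the conservation laws from the explicit jump structure of \eqref{pssde}.

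\textbf{Cutoff SDE.} For each $K\ge 1$, replace $c$ by $c_K:=c\,\indiq_{\{z\le K\}}$ in \eqref{pssde}. Since every $O_{ij}\big|_{\{z\le K\}}$ has finite intensity $\frac{2\pi K}{N-1}ds$, the ordered epochs of $\bigcup_{i<j}O_{ij}\big|_{\{z\le K\}}$ form an a.s.\ diverging sequence $T^K_1<T^K_2<\dots$. One defines $V^{i,K}_t$ by recursion: the trajectories are constant on each $[T^K_n,T^K_{n+1})$, and at an epoch triggered by $O_{ij}$ with mark $(z,\varphi)$ the pair is updated by $V^{i,K}\mapsto V^{i,K}+c(V^{i,K},V^{j,K},z,\varphi)$ and $V^{j,K}\mapsto V^{j,K}+c(V^{j,K},V^{i,K},z,\varphi)$. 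This provides pathwise existence and uniqueness for the cutoff system.

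\textbf{Passage to the limit $K\to\infty$.} The Povzner inequality (Lemma \ref{Povzner}) and a Gronwall argument applied to $\E[N^{-1}\sum_i|V^{i,K}_t|^p]$ yield uniform-in-$K$ $L^p$ moment bounds for $p\in(2,p_0)$, upgraded to uniform exponential-moment bounds via \eqref{c5}. Combined with the elementary estimate $\int_K^\infty G^2(z/x^\gamma)\,dz\le Cx^\gamma K^{1-2/\nu}$ (and its hard-sphere analogue already used in Lemmas \ref{further}--\ref{furtherhs}), this shows that the compensated small-jump contribution vanishes in $L^2$ uniformly on $[0,T]$ as $K\to\infty$. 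A Cauchy argument in $L^2(\Omega;D([0,T],(\rd)^N))$ then produces a càdlàg adapted limit $(V^i_t)$ solving \eqref{pssde}. Pathwise uniqueness is obtained by applying the same $L^2$-estimate to the difference of two solutions driven by the same Poisson family $(O_{ij})$ and closing a Gronwall loop.

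\textbf{Markov property.} For bounded measurable $\phi:(\rd)^N\to\rr$, the Itô formula for Poisson-driven SDEs yields $\phi(\bV_t)=\phi(\bV_0)+M_t+\int_0^t\cL_N\phi(\bV_s)\,ds$ with $M_t$ a local martingale. Using $O_{ji}=O_{ij}$ and the intensity $\frac{1}{N-1}ds\,dz\,d\varphi$, the compensator of the pure-jump part coincides with $\frac{1}{2(N-1)}\sum_{i\ne j}\int_0^\infty\!\int_0^{2\pi}[\phi(\bv+c(v_i,v_j,z,\varphi)\be_i+c(v_j,v_i,z,\varphi)\be_j)-\phi(\bv)]\,d\varphi\,dz$, which is exactly $\cL_N\phi(\bv)$ in the form \eqref{lKgood} of Lemma \ref{rewriteA}.

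\textbf{Conservation and main difficulty.} At each jump of $O_{ij}$, the displacements of $V^i$ and $V^j$ are $c(v,v_*,z,\varphi)$ and $c(v_*,v,z,\varphi)$ with the same $\theta=G(z/|v-v_*|^\gamma)$. Selecting the measurable frames $(I,J)$ so that $I(-X)=-I(X)$ and $J(-X)=-J(X)$ (e.g.\ by a half-space convention on $\rd\setminus\{0\}$) gives $\Gamma(v-v_*,\varphi)+\Gamma(v_*-v,\varphi)=0$, hence $c(v,v_*,z,\varphi)+c(v_*,v,z,\varphi)=0$ pathwise, so $\sum_i V^i_t\equiv\sum_i V^i_0$. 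For kinetic energy, the identity $|c|^2+(v-v_*)\cdot c=0$ (direct from $c=-\frac{1-\cos\theta}{2}(v-v_*)+\frac{\sin\theta}{2}\Gamma(v-v_*,\varphi)$ together with $\Gamma\perp v-v_*$) forces $|v+c|^2+|v_*-c|^2=|v|^2+|v_*|^2$, yielding pathwise conservation of $\sum_i|V^i_t|^2$. The main obstacle is the passage $K\to\infty$: the non-integrable angular singularity makes the total jump rate infinite, so convergence must be extracted from cancellations in the centered Poisson integrals together with the uniform exponential-moment control. Once this is in place, the Markov property and the conservation laws are mechanical consequences of the explicit jump kernel.
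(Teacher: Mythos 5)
Your construction of the cutoff system, the Markov-generator identification, and the conservation computations are all in the spirit of the paper's proof, and the conservation step is essentially identical. However, there is a genuine gap in the passage $K\to\infty$.

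You propose to produce a strong solution by a Cauchy argument in $L^2(\Omega;D([0,T],(\rd)^N))$ and to obtain \emph{pathwise} uniqueness via a Gr\"onwall loop on $\E|\bV_t-\bV'_t|^2$ for two solutions driven by the same $(O_{ij})$. Both of these require an estimate of the form $\int_0^\infty\int_0^{2\pi}|c(v,v_*,z,\varphi)-c(\tv,\tv_*,z,\varphi)|^2\,d\varphi\,dz\le C(|v-\tv|^2+|v_*-\tv_*|^2)$, uniformly. No such estimate holds: $c$ contains the term $\frac{\sin\theta}{2}\Gamma(v-v_*,\varphi)$, and the frame $X\mapsto(I(X),J(X))$ on $\{X\}^\perp$ cannot be chosen continuously on $\rd\setminus\{0\}$ (hairy-ball obstruction), so $c(\cdot,v_*,z,\varphi)$ has $O(|v-v_*|)$ discontinuities in its first argument. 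This is precisely why Tanaka's trick exists: the useful $L^2$ comparison is between $c(v,v_*,z,\varphi)$ and $c_K(\tv,\tv_*,z,\varphi+\varphi_0(v-v_*,\tv-\tv_*))$ (Lemma~\ref{fundest}-(i)), not between $c$'s at the same $\varphi$. Shifting $\varphi$ by a state-dependent phase changes the path but preserves the law (rotational invariance of the intensity in $\varphi$), so one can only expect uniqueness \emph{in law}, not pathwise. The paper therefore does not attempt a Cauchy/pathwise argument; it gets weak existence by tightness of the cutoff laws plus a martingale-problem identification (Step 2), and weak uniqueness (Step 3) by coupling a given weak solution $\bV$ with an auxiliary cutoff process $\hat\bV^K$ driven by the same $(O_{ij})$ but with the angle rotated by $\varphi_{s,i,j}=\varphi_0(V^i_{s-}-V^j_{s-},\hat V^{i,K}_{s-}-\hat V^{j,K}_{s-})$; the rotation leaves the law of $\hat\bV^K$ equal to that of $\bV^K$, and then $\E|\bV_t-\hat\bV^K_t|^2\to 0$ does close by Gr\"onwall, giving uniqueness of the law of $\bV$.

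Two smaller points. First, your frame convention $I(-X)=-I(X),\ J(-X)=-J(X)$ is incompatible with the requirement that $(X/|X|,I/|X|,J/|X|)$ be a \emph{direct} orthonormal basis, since $\det(-X,-I,-J)=-\det(X,I,J)$; the antisymmetry $c(v,v_*,z,\varphi)=-c(v_*,v,z,\varphi)$ needs a different (and somewhat delicate) convention on $(I,J,\varphi)$, which the paper takes as given. Second, the paper's statement of uniqueness should be read as uniqueness in law; your claim of pathwise uniqueness is strictly stronger and is not established by the argument you outline.
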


\begin{proof}
Fix $N\ge2$,  $f_0 \in \cP_2(\rd)$ and  a family $(V^i_0)_{i=1,\dots N}$ of i.i.d. $f_0$-distributed random variables. 

\vskip2mm
\emph{Step} 1. We first prove the conservation of  the system. Assume that $(V^{i}_t)_{i=1,\dots,N, t\geq 0}$ is a Markov process with generator $\cL_{N}$ starting from $(V^i_0)_{i=1,\dots N}$ solves \eqref{pssde} with some Poisson measures introduced above.  By (\ref{pssde}), we have 
\begin{align*}
    \sum_{i=1}^N V^{i}_t=\sum_{i=1}^N V^{i}_0+\sum_{i=1}^N\sum_{j=1}^N\intot\int_0^\infty\int_0^{2\pi} c(V^{i}_\sm,V^{j}_\sm,z,\varphi) O_{ij}(ds,dz,d\varphi).
\end{align*}
Since $ O_{ij}(ds,dz,d\varphi)= O_{ji}(ds,dz,d\varphi)$ and  $c(V^{i}_\sm,V^{j}_\sm,z,\varphi)=-c(V^{j}_\sm,V^{i}_\sm,z,\varphi)$. It's  obvious that \begin{align*}
    \sum_{i=1}^N\sum_{j=1}^N\intot\int_0^\infty\int_0^{2\pi} c(V^{i}_\sm,V^{j}_\sm,z,\varphi) 
O_{ij}(ds,dz,d\varphi)=0.
\end{align*}
We thus get $\sum_{i=1}^N V^{i}_t=\sum_{i=1}^N V^{i}_0.$ Next, the It\^o's formula implies that
\begin{align*}
\sum_{i=1}^N|V^{i}_t|^2=&\sum_{i=1}^N|V^i_0|^2 + \sum_{i=1}^N\sum_{j=1}^N \intot\int_0^\infty\int_0^{2\pi} 
\Big(|V^{i}_\sm+c(V^{i}_\sm,V^{j}_\sm,z,\varphi)|^2 
- |V^{i}_\sm|^2\Big) O_{ij}(ds,dz,d\varphi)\\
=&\sum_{i=1}^N|V^i_0|^2 + \sum_{1\le i<j\le N} \intot\int_0^\infty\int_0^{2\pi} 
\Big(|V^{i}_\sm+c(V^{i}_\sm,V^{j}_\sm,z,\varphi)|^2 \\
&\hskip1cm+|V^{j}_\sm-c(V^{i}_\sm,V^{j}_\sm,z,\varphi)|^2- |V^{i}_\sm|^2-|V^{j}_\sm|^2\Big) O_{ij}(ds,dz,d\varphi)
\end{align*}
Recall the definition of $c(v,v^*,z,\varphi)$ and the fact that $|c(v,v^*,z,\varphi)|=\sqrt{\frac{1-\cos\theta}{2}}|v-v^*|$. Since $(v-v^*)$ and $\Gamma(v-v^*,\varphi)$ are  orthogonal, we get   for $i\ne j,$  
\begin{align*}
&|V^{i}_\sm+c(V^{i}_\sm,V^{j}_\sm,z,\varphi)|^2 +|V^{j}_\sm-c(V^{i}_\sm,V^{j}_\sm,z,\varphi)|^2- |V^{i}_\sm|^2-|V^{j}_\sm|^2\\
&=2(V^{i}_\sm-V^{j}_\sm)\cdot c(V^{i}_\sm,V^{j}_\sm,z,\varphi)+2|c(V^{i}_\sm,V^{j}_\sm,z,\varphi)|^2\\
&=-(1-\cos\theta)|V^{i}_\sm-V^{j}_\sm|^2+(1-\cos\theta)|V^{i}_\sm-V^{j}_\sm|^2=0,
\end{align*}
where $\theta= G(z/|V^{i}_\sm-V^{j}_\sm|^\gamma).$
We thus conclude that the system preserves the  energy.
\vskip2mm
\emph{Step} 2. The proof to get  the  weak existence and weak uniqueness of \eqref{pssde} is standard, we will sketch the proof as in Proposition 1.2-(ii) of \cite{MR3456347}. First, we introduce a cutoff case: for $K\ge1$,  consider 
\begin{align}\label{cpssde}
V^{i,K}_t=V^i_0 + \sum_{j=1}^N\intot\int_0^\infty\int_0^{2\pi} c_K(V^{i,K}_\sm,V^{j,K}_\sm,z,\varphi) 
O_{ij}(ds,dz,d\varphi),
\quad i=1,\dots,N.
\end{align}
where $c_K = c\indiq_{\{z\le K\}}$. It's obvious that the strong existence and uniqueness for \eqref{cpssde} hold,
 since the Poisson measures involved in \eqref{cpssde} are finite. Set $\bV^K=(V^{1,K},\cdots,V^{N,K})$ as solution of \eqref{cpssde}. The existence of a solution in law to \eqref{pssde} is  not hard to check. Indeed, consider any subsequence of $\bV^K$, denote  still  by $\bV^K$, we can show that the laws of  $\bV^K$ are tight by using Aldous's  criterion  \cite{MR0474446},   since the second moment estimates are obvious  uniformly in $K$ and  $N$ due  to the conservation proved above. Then using martingale problems methods and classic probability space enlargement arguments, we thus conclude that  the limit of $\bV^K$  determined by \eqref{cpssde} as $K\to\infty$ gives the  weak existence of solution  to  \eqref{pssde}.
 
 \vskip2mm
\emph{Step} 3. To prove  the uniqueness in law for the weak solution to \eqref{pssde}, it suffices to show  that  any weak  solution $\bV=(V^{i}_t)_{i=1,\dots,N, t\geq 0}$ to \eqref{pssde} can be approximated by the strong solution to the cutoff equation \eqref{cpssde} as the cutoff level goes to infinity. Especially, for $K\ge1$, consider the solution $\hat{\bV}^K$ to 
\begin{align*}
\hat{V}^{i,K}_t=V^i_0 + \sum_{j=1}^N\intot\int_0^\infty\int_0^{2\pi} c_K(\hat{V}^{i,K}_\sm,\hat{V}^{j,K}_\sm,z,\varphi+\varphi_{s,i,j}) 
O_{ij}(ds,dz,d\varphi),
\quad i=1,\dots,N,
\end{align*}
 where $\varphi_{s,i,j}:=\varphi_0(V^{i}_\sm-V^{j}_\sm,\hat{V}^{i,K}_\sm-\hat{V}^{j,K}_\sm)$.  The solution to this equation obviously exists and is unique since the Poisson measures  are finite recalling $c_K = c\indiq_{\{z\le K\}}$. Notice that $\hat{\bV}^K$ is a Markov process starting from $(V^i_0)_{i=1,\dots N}$  with the same law as  $\bV^K$. The only difference between $\hat{\bV}^K$ and $\bV^K$ is the term $\varphi_{s,i,j}$, while  this does not change the law of the system, see Lemma 4.4-(ii) \cite{MR3456347}. So the law  of $\hat{\bV}^K$  is unique. Next, due to Lemma 5.1 of \cite{MR3456347}, using It\^o's formula and Gr\"onwall's lemma, we are then able to conclude that, by a similar computation to Step 2 of proof of Proposition 1.2-(ii) of \cite{MR3456347} which is omitted here, for  each $T>0$,
 \[\lim_{K\to\infty}\sup_{[0,T]}\E[|\bV_t-\hat{\bV}_t^K|^2]=0,\]
which implies that  $\hat{\bV}^K$ converges to  $\bV$ in law as $K\to \infty$ and exactly $\bV$ is the unique Markov process with generator $\cL_N$.
\end{proof}
Next, we will give the following important moment estimation of the particle system. This result is inspired by the \cite[Corollary 17]{MR3769742} for the Maxwell molecules, see also \cite[Lemma 5.3]{MR3069113} with bounded initial energy.
\begin{prop}\label{momentest}
For $\gamma\in(0,1]$, suppose \eqref{cs}, \eqref{c4}, \eqref{c2hs} or \eqref{c2}. Let $p\ge2$. Consider $N\ge2$ and recall $(V_t^{i})_{i=1,\cdots,N,t\ge0}$ introduced in \eqref{ps}. Then there is a constant $C_p>0$ such that
$$\sup_{t\ge0}\mathbb{E}[|V^{1}_t|^p]\le (1+C_{p}) \E[|V^{1}_0|^{p}].$$
\end{prop}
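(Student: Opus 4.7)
The plan is to derive, via Dynkin's formula and the Povzner inequality (Lemma~\ref{Povzner}), a closed differential inequality for $m_p^N(t):=\E[|V^1_t|^p]$, and then to conclude by an ODE comparison. Exchangeability of the system (initial i.i.d.\ data, permutation-invariant generator $\cL_N$) guarantees $\E[|V^i_t|^p]=m_p^N(t)$ for every $i$. The finiteness of $m_p^N(t)$ at each $t$ is first secured by a standard truncation argument: apply the generator only to a cutoff $\phi_M(\bv)=\frac1N\sum_i(|v_i|\wedge M)^p$, obtain an \emph{a priori} bound uniform in $M$ using the conservation $m_2^N(t)=m_2^N(0)$ from Proposition~\ref{kpst}, then let $M\to\infty$.

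Applying Dynkin to $\phi(\bv)=\frac1N\sum_{i=1}^N|v_i|^p$ with $\cL_N$ in the form \eqref{lKgood} and collapsing the sum over pairs by exchangeability yields
\[
\frac{d}{dt}m_p^N(t)\;=\;\tfrac12\,\E\!\left[\int_0^\infty\!\!\int_0^{2\pi}\!\!\big(|V^{1}_t+c|^p+|V^{2}_t-c|^p-|V^1_t|^p-|V^2_t|^p\big)\,d\varphi\,dz\right]
\]
with $c=c(V^1_t,V^2_t,z,\varphi)$. Inserting the Povzner bound of Lemma~\ref{Povzner} and using exchangeability of $(V^1,V^2)$ once more produces
\[
\frac{d}{dt}m_p^N(t)\;\le\;-A_p\,\E\big[|V^1_t-V^2_t|^\gamma|V^1_t|^p\big]+\tilde A_p\,\E\big[|V^1_t-V^2_t|^\gamma|V^1_t|^{p-2}|V^2_t|^2\big].
\]

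The core step is to turn this into a useful self-contained inequality. The AM--GM identity $a^{p-2}b^2+b^{p-2}a^2\le a^p+b^p$ together with exchangeability gives $\E[|V^1-V^2|^\gamma(|V^1|^{p-2}|V^2|^2+|V^2|^{p-2}|V^1|^2)]\le 2\E[|V^1-V^2|^\gamma|V^1|^p]$, and when the Povzner constants satisfy $A_p\ge\tilde A_p$ this already yields $m_p^N(t)\le m_p^N(0)$. For general $p$, I would adopt a Bobylev--Mischler--Mouhot dichotomy: split $|V^2|^2=|V^2|^2\indiq_{|V^2|\le R}+|V^2|^2\indiq_{|V^2|>R}$ with $R$ large, absorb the bounded-$|V^2|$ piece into lower-order moments controlled by interpolation with the conserved $m_2^N(0)$, and dominate the tail piece by a strict fraction of the dissipation $\E[|V^1-V^2|^\gamma|V^1|^p]$, using the lower bound $|V^1-V^2|^\gamma\ge c|V^1|^\gamma$ valid on $\{|V^2|\le|V^1|/2\}$ together with Markov's inequality on $\{|V^2|>R\}$. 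The resulting inequality has the form $\frac{d}{dt}m_p^N(t)\le-c_p\,m_p^N(t)^{1+\gamma/p}+K_p$ with $c_p,K_p$ depending only on $p$, $m_2(0)$ and $m_p^N(0)$, and an elementary ODE comparison then gives $\sup_{t\ge0}m_p^N(t)\le(1+C_p)\E[|V^1_0|^p]$. The main obstacle is the non-independence of the particles in the system, which forbids factoring $\E[F(V^1,V^2)]$ as a product integral against $f_t\otimes f_t$; systematic symmetrization via exchangeability and the a.s.\ energy conservation of Proposition~\ref{kpst} are the two tools that work around this.
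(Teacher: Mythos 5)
Your plan starts in the right place (Dynkin/Itô applied to $\phi(\bv)=\frac1N\sum_i|v_i|^p$, the Povzner inequality from Lemma~\ref{Povzner}, an ODE comparison in the style of Bobylev, exchangeability and a.s.\ conservation of the energy $\frac1N\sum_i|V^i_t|^2$), and these are indeed the ingredients the paper uses. The genuine gap is in how you propose to close the cross term
\[
\E\bigl[\,|V^1_t-V^2_t|^\gamma\,|V^1_t|^{p-2}\,|V^2_t|^2\,\bigr],
\]
which does not factor because $V^1_t,V^2_t$ are not independent. The unconditional Bobylev--Mischler--Mouhot dichotomy you sketch (split on $\{|V^2|\le R\}$ vs.\ $\{|V^2|>R\}$) cannot escape this: after writing $|V^1-V^2|^\gamma\le|V^1|^\gamma+|V^2|^\gamma$ you are still left with genuine cross moments such as $\E[|V^2|^\gamma|V^1|^{p-2}]$ that do not split into a product of marginal moments, and bounding them by Cauchy--Schwarz brings back moments of order $\ge p$ of $V^2$ rather than the low-order quantity needed to beat the dissipation. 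Likewise the statement that the tail piece can be ``dominated by a strict fraction of the dissipation'' is not justified: there is no control on $|V^2|/|V^1|$ without independence, so the lower bound $|V^1-V^2|^\gamma\ge c|V^1|^\gamma$ is only available on a random event whose probability you cannot uniformly estimate.

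The paper's proof gets around exactly this obstruction by a device you do not use: it conditions on the initial (hence pathwise-conserved) energy $\mathcal E_0:=\frac1N\sum_i|V^i_0|^2$ and works with $u_t:=\E[|V^1_t|^p\mid\mathcal E_0]$. Then exchangeability gives $\E[|V^1_t|^{p+\gamma-2}|V^2_t|^2\mid\mathcal E_0]\le 2\,\E[|V^1_t|^{p+\gamma-2}\tfrac1N\sum_i|V^i_t|^2\mid\mathcal E_0]=2\mathcal E_0\,\E[|V^1_t|^{p+\gamma-2}\mid\mathcal E_0]$, since $\tfrac1N\sum_i|V^i_t|^2=\mathcal E_0$ a.s.\ is $\mathcal E_0$-measurable and factors out of the conditional expectation. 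Conditional H\"older then turns this into $2\mathcal E_0\,u_t^{1-(2-\gamma)/p}$, which closes the differential inequality as $\frac{d}{dt}u_t\le -A_p u_t^{(p+\gamma)/p}+6\tilde A_p\mathcal E_0 u_t^{1-(2-\gamma)/p}$. The ODE comparison then gives $u_t\le u_0+C_p\mathcal E_0^{p/2}$, and only at the very end does one take expectations and use Jensen on the empirical average to get $\E[\mathcal E_0^{p/2}]\le\E[|V^1_0|^p]$, yielding the stated constant $(1+C_p)\E[|V^1_0|^p]$. Attempting the same estimate without conditioning forces H\"older on $\E[|V^1_t|^{p-2+\gamma}\mathcal E_0]$ and hence requires moments of $f_0$ strictly higher than $p$, so even if your unconditional route could be made to close, it would not produce a constant depending on $\E[|V^1_0|^p]$ alone. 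In short: the missing idea is to condition on $\mathcal E_0$; the rest of your outline is consistent with the paper.
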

\begin{proof} 
 We  define a random variable $\mathcal{E}_0:=\frac{1}{N}\sum_{i=1}^N|V_0^i|^2$ and set $u_t:=\mathbb{E}[|V^{1}_t|^p|\mathcal{E}_0]$ for each $t\ge0$.  By exchangeability, we have $u_t=\frac{1}{N}\sum_{i=1}^N\mathbb{E}[|V^{i}_t|^p|\mathcal{E}_0].$ By It\^o's formula, we get
\begin{align*}
\sum_{i=1}^N\mathbb{E}[|V^{i}_t|^p]
= \sum_{i=1}^N\mathbb{E}[|V^i_0|^p] &+ \frac {1} {2(N-1)} \sum_{1\le i<j\le N}^N\intot\int_0^\infty\int_0^{2\pi} 
\mathbb{E}\Big[|V^{i}_s+c(V^{i}_s,V^{j}_s,z,\varphi)|^p\\
& +|V^{j}_s+c(V^{j}_s,V^{i}_s,z,\varphi)|^p- |V^{i}_s|^p-|V^{j}_s|^p\Big] dsdzd\varphi.
\end{align*}
Using  Povzner's inequality in Lemma \ref{Povzner} and  exchangeability, we have
\begin{align*}
   \frac{du_t}{dt}
&\le  \frac{1}{2N(N-1)}\sum_{1\le i<j\le N}^N \mathbb{E}\Big[|V^{i}_t-V^{j}_t|^\gamma\\
&\hskip0.3cm\times\Big(-A_p(|V^{i}_t|^p+|V^{j}_t|^p)+\tilde{A}_p(|V^{i}_t|^{p-2}|V^{j}_t|^2+|V^{j}_t|^{p-2}|V^{i}_t|^2)\Big)\Big|\mathcal{E}_0\Big]\\
&=\frac{1}{2}\mathbb{E}\Big[|V^{1}_t-V^{2}_t|^\gamma\\
&\hskip0.5cm\times\Big(-A_p(|V^{1}_t|^p+|V^{2}_t|^p)+\tilde{A}_p(|V^{1}_t|^{p-2}|V^{2}_t|^2+|V^{2}_t|^{p-2}|V^{1}_t|^2\Big)\Big|\mathcal{E}_0\Big].
\end{align*}
However, using that $|v-w|^\gamma\ge \big||v|-|w|\big|^\gamma\ge |v|^\gamma-|w|^\gamma \hbox{and}\,  |v-w|^\gamma\le |v|^\gamma+|w|^\gamma,$ we have 
\begin{align*}
     &\mathbb{E}\Big[|V^{1}_t-V^{2}_t|^\gamma\Big(-A_p(|V^{1}_t|^p+|V^{2}_t|^p)\\
     &\hskip2cm+\tilde{A}_p(|V^{1}_t|^{p-2}|V^{2}_t|^2+|V^{2}_t|^{p-2}|V^{1}_t|^2)\Big)\Big|\mathcal{E}_0\Big]\\
\le& -A_p\mathbb{E}\Big[|V^{1}_t|^{p+\gamma}+|V^{2}_t|^{p+\gamma}\Big|\mathcal{E}_0\Big]+3\tilde{A}_p\mathbb{E}\Big[|V^{1}_t|^{p+\gamma-2}|V^{2}_t|^2+|V^{2}_t|^{p+\gamma-2}|V^{1}_t|^2\Big|\mathcal{E}_0\Big]\\
\le&  -2A_p\mathbb{E}\Big[|V^{1}_t|^{p+\gamma}\Big|\mathcal{E}_0\Big]+6\tilde{A}_p\mathbb{E}\Big[|V^{1}_t|^{p+\gamma-2}|V^{2}_t|^2\Big|\mathcal{E}_0\Big].
\end{align*}
Besides, we also find that 
\begin{align*}
\mathbb{E}\Big[|V^{1}_t|^{p+\gamma-2}|V^{2}_t|^2\Big|\mathcal{E}_0\Big]&=\mathbb{E}\Big[|V^{1}_t|^{p+\gamma-2}\frac{1}{N-1}\sum_{i=2}^N|V^{i}_t|^2\Big|\mathcal{E}_0\Big]\\
&\le 2\mathbb{E}\Big[|V^{1}_t|^{p+\gamma-2}\frac{1}{N}\sum_{i=1}^N|V^{i}_t|^2\Big|\mathcal{E}_0\Big]\le 2\mathcal{E}_0u_t^{1-(2-\gamma)/p},
\end{align*}
where  we  have used the conditional H\"older's inequality,  the fact that $\frac{1}{N} \sum_{i=1}^N|V^{i}_t|^2=\mathcal{E}_0$ and the exchangeability. We thus deduce that 
\begin{align*}
   \frac{du_t}{dt}&\le-A_p\E\Big[|V^{1}_t|^{p+\gamma}\Big|\mathcal{E}_0\Big]+6\mathcal{E}_0u_t^{1-(2-\gamma)/p}\\
&\le-A_p u_t^{(p+\gamma)/p}+6\tilde{A}_p\mathcal{E}_0u_t^{1-(2-\gamma)/p},
   \end{align*}
by applying Jensen's inequality to the last line. Whence  by Lemma 6.3 in \cite{MR1478067}, we deduce that $u_t\le \max\{u_0, x_*\}$, where $x_*=(6\tilde{A}_p\mathcal{E}_0/A_p)^{p/2}$ is the unique positive root determined by  $-A_p u_t^{(p+\gamma)/p}+6\tilde{A}_p\mathcal{E}_0u_t^{1-(2-\gamma)/p}=0$. This implies 
\[u_t\le u_0+C_p\mathcal{E}_0^{p/2},\]
with a constant $C_p>0$ depending only on $A_p$ and $\tilde{A}_p$. Finally, according to the conditional expectation and applying the Jensen's inequality to the term $\frac{1}{N}\sum_{i=1}^N|V_0^i|^2$ , we get
\begin{align*}
\mathbb{E}[|V^{1}_t|^p]=\E(u_t)&\le \mathbb{E}\Big[\E\big(|V^{1}_0|^p|\mathcal{E}_0\big)+C_p\mathcal{E}_0^{p/2}\Big]\\
&\le\E\big(|V^{1}_0|^p\big)+C_p\E\left[\Big(\frac{1}{N}\sum_{i=1}^N|V_0^i|^2\Big)^{p/2}\right]\\
&\le\E\big(|V^{1}_0|^p\big)+C_p\E\left[\frac{1}{N}\sum_{i=1}^N|V_0^i|^p\right],
\end{align*}
which completes the proof by the exchangeability.

\end{proof}

\subsection{The coupling}
To get the convergence of the Kac's particle system to the weak solution $f$ of \eqref{be}, we need to choose some $f$-distributed random variable in \eqref{cbp} in an optimal way such that it is close to the particle in the system colliding with the other in the system coupled with the cutoff Boltzmann process determined by \eqref{cbp}. Following the ideas of Lemma 3 of \cite{MR3476628}, we have the following variant of optimal coupling.

\vip

\begin{lem}\label{ww2p}
Let $f_t\in C([0,\infty),\cP_2(\rd))$ has a density for all $t>0$.  Consider $N\ge 2$, $\bw\in(\rd)_{\bullet}^N$, with $(\rd)_{\bullet}^N:=\{\bw\in(\rd)^N: w_i\ne w_j, \forall i\ne j\}$. For $i,j=1,...,N$, there exists an $\rd$-valued function $\Pi^{ij}_t(\bw,\alpha),$ measurable in $(t,\bw,\alpha)\in[0,\infty]\times (\rd)^N\times[0,1]$  enjoying the following properties:

(i) For any $\bw\in (\rd)_{\bullet}^N,\ t\ge 0,\ i=1,...,N$, $\int_0^1 \frac{1}{N-1}\sum_{j\ne i}^N|\Pi_t^{ij}(\bw,\alpha)-w_j|^2 d\alpha=\cW^2_2(f_t, \bar{\mu}_{\bw}^{i})$
with $\bar{\mu}_{\bw}^{i}=\frac{1}{N-1}\sum_{j\ne i}\delta_{w_j}$.

\vip

(ii) For any exchangeable random vector $\bY\in (\rd)_\bullet^N,\ t\ge 0,\ i=1,...,N$, any $j\ne i$, and for any bounded measurable function $\phi,$  we have $\E[\int_0^1\phi(\Pi^{ij}_t(\bY,\alpha))d\alpha]=\int_\rd\phi(u)f_t(du)$.

\end{lem}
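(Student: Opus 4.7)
The plan is to build $\Pi^{ij}_t(\bw,\alpha)$ out of an optimal transport partition of $\rd$ into $N-1$ cells of equal $f_t$-mass associated with the atoms of $\bar\mu_\bw^i$, and then parametrize each cell by $\alpha\in[0,1]$. Property (i) will come directly from the defining optimality of the partition, while property (ii) will follow by combining a permutation-equivariance of the partition with the exchangeability of $\bY$. First, fix $t>0$ and $\bw\in(\rd)_\bullet^N$. Since $f_t$ has a density and $\bar\mu_\bw^i$ is atomic with $N-1$ distinct atoms, Brenier's theorem in its semi-discrete form produces an $f_t$-a.e.\ unique measurable map $T^i_{t,\bw}:\rd\to\{w_j : j\neq i\}$ attaining $\cW_2^2(f_t,\bar\mu_\bw^i)$; the preimages $A^i_j(t,\bw):=(T^i_{t,\bw})^{-1}(\{w_j\})$ then partition $\rd$ (up to $f_t$-null sets) with $f_t(A^i_j(t,\bw))=1/(N-1)$, and may be written explicitly as Laguerre cells
\[
A^i_j(t,\bw)=\bigl\{v\in\rd : |v-w_j|^2+\psi_j\le|v-w_k|^2+\psi_k\ \text{for all }k\neq i\bigr\}
\]
for Kantorovich potentials $\psi_\cdot=\psi_\cdot(t,\bw)$ depending measurably on $(t,\bw)$. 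For $\bw\notin(\rd)_\bullet^N$ I set $\Pi^{ij}_t$ to a convenient measurable default; it plays no role in (i)--(ii).

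Next, for each $(t,\bw)$ and $j\neq i$, introduce the probability $\mu^{i,j}_{t,\bw}:=(N-1)f_t|_{A^i_j(t,\bw)}$ and select $\Pi^{ij}_t(\bw,\alpha)\in A^i_j(t,\bw)$, jointly measurable in $(t,\bw,\alpha)$, such that $\alpha\mapsto\Pi^{ij}_t(\bw,\alpha)$ pushes Lebesgue measure on $[0,1]$ forward to $\mu^{i,j}_{t,\bw}$. Existence of such a measurable parametrization is classical, e.g.\ by a measurably chosen Knothe--Rosenblatt rearrangement, or by Kuratowski--Ryll-Nardzewski selection combined with the weak measurability of $(t,\bw)\mapsto \mu^{i,j}_{t,\bw}\in \cP(\rd)$ inherited from the measurable dependence of the Laguerre potentials. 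This is the most technical step and the one I expect to require the most care, though no genuinely new idea is needed.

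Property (i) is then immediate from the pushforward property: $\int_0^1|\Pi^{ij}_t(\bw,\alpha)-w_j|^2 d\alpha=(N-1)\int_{A^i_j(t,\bw)}|v-w_j|^2 f_t(dv)$, and summing over $j\neq i$ and dividing by $N-1$ yields $\int|v-T^i_{t,\bw}(v)|^2 f_t(dv)=\cW_2^2(f_t,\bar\mu_\bw^i)$. For (ii), the key observation is permutation-equivariance: for any permutation $\sigma$ of $\{1,\dots,N\}$ with $\sigma(i)=i$ and $\bw^\sigma:=(w_{\sigma(1)},\dots,w_{\sigma(N)})$, one has $\bar\mu_{\bw^\sigma}^i=\bar\mu_\bw^i$, so $T^i_{t,\bw^\sigma}=T^i_{t,\bw}$ as set-maps by uniqueness, whence $A^i_j(t,\bw^\sigma)=A^i_{\sigma(j)}(t,\bw)$. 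Exchangeability of $\bY$ thus gives $\mathbb{P}(v\in A^i_j(t,\bY))=\mathbb{P}(v\in A^i_{\sigma(j)}(t,\bY))$ for every such $\sigma$; since the $(A^i_k(t,\bY))_{k\neq i}$ partition $\rd$, this common probability must equal $1/(N-1)$ for $f_t$-a.e.\ $v$. Fubini then closes the argument:
\[
\E\!\int_0^1\!\phi(\Pi^{ij}_t(\bY,\alpha))\,d\alpha=(N-1)\int_\rd\phi(v)\,\mathbb{P}(v\in A^i_j(t,\bY))\,f_t(dv)=\int_\rd\phi(v)\,f_t(dv).
\]
The main obstacle, as already flagged, is the joint measurability in Step 2; all the genuine probabilistic content is the symmetry--exchangeability argument of Step 3.
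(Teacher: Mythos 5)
Your proof is correct, and the overall architecture coincides with the paper's: select an optimal transport plan measurably in $(t,\bw)$, disintegrate over the $N-1$ atoms of $\bar\mu^i_\bw$, parametrize each conditional by $\alpha\in[0,1]$, read off (i) from optimality plus the pushforward property, and get (ii) by averaging over $j\neq i$ using exchangeability. Where you differ is in the realization: you invoke Brenier's semi-discrete theorem to make the optimal plan a deterministic map, so the disintegration becomes explicit Laguerre cells $A^i_j(t,\bw)$ of equal $f_t$-mass; the paper instead works directly with an abstract optimal coupling $R_{t,\mu_\be}$ produced by Villani's measurable-selection theorem and disintegrates it as $F(t,\be,B):=(N-1)\,R_{t,\mu_\be}(B,\{e_1\})$, requiring no uniqueness of the plan. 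Your route buys concreteness: $f_t$-a.e.\ uniqueness of the Brenier map automatically yields the permutation-equivariance $A^i_j(t,\bw^\sigma)=A^i_{\sigma(j)}(t,\bw)$ that drives your proof of (ii), whereas the paper's version implicitly needs the selected coupling to depend on the vector $\be$ only through the measure $\mu_\be$ (it sorts coordinates $2,\dots,N-1$ of $\bw^{ij}$ but singles out the first coordinate, so this invariance deserves to be stated). Your (ii) argument --- $\mathbb{P}(v\in A^i_j(t,\bY))=1/(N-1)$ plus Fubini --- is a minor rewriting of the paper's observation that the conditional $\int\phi\,R_{t,\bar\mu^i_\bY}(\cdot\,,\{Y_j\})/R_{t,\bar\mu^i_\bY}(\rd,\{Y_j\})$ has the same law for every $j\neq i$; both bottleneck at the same measurable-selection and $[0,1]$-parametrization step you correctly flag as the technical crux.
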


\begin{proof}
For any fixed $(i,j)\in \{1,...,N\}^2$ and $i\ne j$, we are going to construct a measurable mapping $\Pi_t^{ij}:\mathbb{R}_+\times(\rd)_\bullet^N\times[0,1]$ as   $(t,\bw,\alpha)\mapsto\Pi_t^{ij}(\bw,\alpha).$ 
\vip
We fix $n\ge1$. For $\be\in (\rd)_\bullet^n$, we define $\mu_{\be}=\frac{1}{n}\sum_{i=1}^n\delta_{e_i},$ the  empirical measure associated to $\be$. Note that $f_t\in C([0,\infty),\cP_2(\rd))$, and  thanks to a measurable selection result [see, e.g., Corollary 5.22 of Vinalli \cite{MR2459454}], there exists a measurable mapping $(t,\be)\mapsto R_{t,\mu_{\be}}$ such that $R_{t,\mu_{\be}}\in \mathcal{P}(\rd\times\rd)$  is an optimal coupling between $f_t$ and $\mu_{\be}.$  We define 
$$
F(t,\be,B):=\frac{R_{t,\mu_{\be}}\Big(B, \{e_1\}\Big)}{R_{t,\mu_{
\be}}\Big(\rd, \{e_1\}\Big)}=\frac{nR_{t,\mu_{\be}}\Big(B, \{e_1\}\Big)}{\sharp\{l: e_l=e_1\}}=nR_{t,\mu_{\be}}\Big(B, \{e_1\}\Big),
$$
for all $t\ge 0,\ \be\in(\rd)_\bullet^n$  and any Borel set $B\subseteq\rd.$ Following from the measurability of $R_{t,\mu_{\be}}$ in $(t, \be)$, it's clear that $F: \rr_+\times(\rd)_\bullet^n\times\rd\mapsto\rr$ is a probability kernel.  Then $F$ can be associated to a measurable function $g:\rr_+\times(\rd)_\bullet^{n}\times[0,1]\mapsto\rd$ such that $g(t,\be,\alpha)$ has distribution  $F(t,\be,\cdot)$  whenever $\alpha$ is a uniform random variable in $[0,1]$ for every $(t,\be)\in\rr_+\times(\rd)_\bullet^{n}$. Now, taking $n=N-1$ for given $N\ge2$, $i\ne j,$ for any vector $\bw\in(\rd)_\bullet^{N}$, we
now define
\begin{equation}\label{cp}
\Pi_t^{ij}(\bw,\alpha)=g(t,\bw^{ij},\alpha)
\end{equation}
where $\bw^{ij}\in(\rd)_\bullet^{N-1}$ denotes the vector $\bw$ with its $i^{th}$ coordinate removed, the $j^{th}$ coordinate
in the first position, and the remaining coordinates in positions $2,...,N-1$ in an increasing order. Note \eqref{cp} and that $g(t,\be,\alpha)$ has distribution $F(t,\be,\cdot)$, we have 
\begin{align*}
    \int_0^1 |\Pi_t^{ij}(\bw,\alpha)-w_j|^2 d\alpha&=\int_0^1\big|g(t,\bw^{ij},\alpha)-w_j|^2d\alpha
    =\int_\rd |u-w_j|^2\frac{R_{t,\bar{\mu}_{\bw}^{i}}\big(du, \{w_j\}\big)}{R_{t,\bar{\mu}_{\bw}^{i}}\big(\rd, \{w_j\}\big)}\\
  &=(N-1)\int_\rd |u-w_j|^2R_{t,\bar{\mu}_{\bw}^{i}}\big(du, \{w_j\}\big).
\end{align*}
Furthermore, 
\begin{align*}
    \int_0^1 \frac{1}{N-1}\sum_{j\ne i}^N|\Pi_t^{ij}(\bw,\alpha))-w_j|^2 d\alpha=&\sum_{j\ne i}^N\int_\rd |u-w_j|^2 R_{t,\bar{\mu}_{\bw}^{i}}\big(du, \{w_j\}\big)\\
    =\int_{\rd\times\rd}|u-v|^2R_{t,\bar{\mu}_{\bw}^{i}}\big(du, dv\big)=&\cW^2_2(f_t, \bar{\mu}_{\bw}^{i}),
\end{align*}
which completes the proof of $(i)$.
\vip
We now check Point (ii). For any exchangeable random vector $\bY\in (\rd)_\bullet^N$,
recalling the definition of $\Pi^{ij}_t(\bw,\alpha),$ and for any bounded measurable function $\phi$ from $\rd$ to $\rr$, we have
\begin{align*}
    \int_0^1\phi\big(\Pi^{ij}_t(\bY,\alpha)\big)d\alpha&=\int_0^1 \phi\big(g(t,\bY^{ij},\alpha)\big)d\alpha=\int_{\rd}\phi(u)\frac{R_{t,\bar{\mu}_{\bY}^{i}}\big(du, \{Y_j\}\big)}{R_{t,\bar{\mu}_{\bY}^{i}}\big(\rd, \{Y_j\}\big)}.
\end{align*}
By the exchangeability of $\bY$, we know that $\int_{\rd}\phi(u)\frac{R_{t,\bar{\mu}_{\bY}^{i}}\big(du, \{Y_j\}\big)}{R_{t,\bar{\mu}_{\bY}^{i}}\big(\rd, \{Y_j\}\big)}$ has the same distribution for all $j\ne i$. Hence,
\begin{align*}
 \E\left[\int_0^1\phi\big(\Pi^{ij}_t(\bY,\alpha)\big)d\alpha\right]
 =\frac{1}{N-1}\sum_{j\ne i}\E\left[\int_{\rd}\phi(u)\frac{R_{t,\bar{\mu}_{\bY}^{i}}(du, \{Y_j\})}{R_{t,\bar{\mu}_{\bY}^{i}}(\rd, \{Y_j\})} \right]
=\int_{\rd}\phi(u) f_t(du).
\end{align*}
This concludes point (ii).
\end{proof}

\subsection{The particle systems}
Let's now couple the Kac's particle system with a system of cutoff Boltzmann processes. We first see the explicit formulation of  the Kac's particle system which  is implied directly by Proposition \ref{kpst}.

\begin{lem}\label{kps}
 Consider $N\ge2$, $K\in[1,\infty)$ fixed. Then there exist, on some probability space, a family of  i.i.d.  $(V_0^i)_{i=1,\dots N}$
with common law $f_0$ and an i.i.d. family of 
Poisson measures $(M_{ij}(ds,d\alpha,dz,d\varphi))_{1\le i<j\le N}$ on $[0,\infty)\times [0,1]\times[0,\infty)\times [0,2\pi)$ with intensity measure
$\frac{1}{N-1}ds d\alpha dz d\varphi$, independent of $(V_0^i)_{i=1,\dots N}$,  satisfying $M_{ij}(ds,d\alpha,dz,d\varphi)=M_{ji}(ds,d\alpha,dz,d\varphi)$ for $1\le j<i\le N$ and  $M_{ii}(ds,d\alpha,dz,d\varphi)=0$ for $i=1,\dots, N$, such that for each $i=1,\dots,N$,
the following SDEs
\beq\label{ps}
V^{i}_t=V^i_0 + \sum_{j=1}^N\intot\int_0^1\int_0^\infty\int_0^{2\pi} 
c(V^{i}_\sm,V^{j}_\sm,z,\varphi) M_{ij}(ds,d\alpha,dz,d\varphi),
\eeq
has the unique (in law) solution $(V^{i}_t)_{i=1,\dots,N, t\geq 0}$ which is a Markov process  with generator $\cL_N$ defined by \eqref{kacg}. Furthermore, it is almost surely conservative:  for all $t\ge 0$, it holds that $\sum_{i=1}^N V^{i}_t=\sum_{i=1}^N V^{i}_0 $ and $\sum_{i=1}^N|V^{i}_t|^2=\sum_{i=1}^N|V^{i}_0|^2 .$ 
\end{lem}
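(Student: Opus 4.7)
The plan is to reduce Lemma \ref{kps} directly to Proposition \ref{kpst} by exploiting the fact that the extra coordinate $\alpha\in[0,1]$ attached to the Poisson measures $M_{ij}$ does not appear in the integrand $c(V^i_\sm,V^j_\sm,z,\varphi)$ of \eqref{ps}. In other words, the $\alpha$-coordinate is a pure enrichment of the Poisson structure of Proposition \ref{kpst}, which only serves to feed, later on, the optimal coupling mapping $\Pi^{ij}_t$ of Lemma \ref{ww2p}; it plays no role in the dynamics themselves. Note incidentally that the parameter $K$ in the statement does not enter into the construction.

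First, I would invoke Proposition \ref{kpst} on some probability space to obtain the i.i.d. family $(V^i_0)_{1\leq i\leq N}$ with common law $f_0$, the Poisson measures $(O_{ij})_{1\leq i<j\leq N}$ on $[0,\infty)\times[0,\infty)\times[0,2\pi)$ with intensity $\frac{1}{N-1}ds\,dz\,d\varphi$ (extended by $O_{ji}:=O_{ij}$ and $O_{ii}:=0$), and the corresponding unique-in-law process $(V^i_t)$ solving \eqref{pssde}, which is Markov with generator $\cL_N$ and satisfies $\sum_i V^i_t=\sum_i V^i_0$, $\sum_i|V^i_t|^2=\sum_i|V^i_0|^2$. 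Next, I would enlarge the probability space in order to attach, for every strictly ordered pair $1\leq i<j\leq N$ and every atom $(s,z,\varphi)$ of $O_{ij}$, an independent uniform $[0,1]$ variable $\alpha^{ij}_{s,z,\varphi}$, all these marks being mutually independent and independent of $(V^i_0)$ and $(O_{ij})$. By the classical marking theorem for Poisson random measures, the point measures
\[
M_{ij}(ds,d\alpha,dz,d\varphi):=\sum_{(s,z,\varphi)\in O_{ij}}\delta_{(s,\alpha^{ij}_{s,z,\varphi},z,\varphi)}(ds,d\alpha,dz,d\varphi),\qquad 1\leq i<j\leq N,
\]
are independent Poisson random measures on $[0,\infty)\times[0,1]\times[0,\infty)\times[0,2\pi)$ with intensity $\frac{1}{N-1}ds\,d\alpha\,dz\,d\varphi$. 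Finally, I would set $M_{ji}:=M_{ij}$ for $j<i$ and $M_{ii}:=0$ in order to enforce the symmetry required by the statement.

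To conclude, since the integrand $c(V^i_\sm,V^j_\sm,z,\varphi)$ does not depend on $\alpha$ and since $\int_0^1 d\alpha=1$, one checks that
\[
\int_0^t\!\!\int_0^1\!\!\int_0^\infty\!\!\int_0^{2\pi}\!\! c(V^i_\sm,V^j_\sm,z,\varphi)\,M_{ij}(ds,d\alpha,dz,d\varphi)=\int_0^t\!\!\int_0^\infty\!\!\int_0^{2\pi}\!\! c(V^i_\sm,V^j_\sm,z,\varphi)\,O_{ij}(ds,dz,d\varphi),
\]
so that \eqref{ps} coincides pathwise with \eqref{pssde}. Hence $(V^i_t)$ solves \eqref{ps}; existence, uniqueness in law, the identification of the generator as $\cL_N$, and the almost sure conservation of momentum and energy all transfer verbatim from Proposition \ref{kpst}. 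There is no genuine obstacle in the argument; the only delicate point, which is pure bookkeeping, is to make sure that the marking respects the symmetry $M_{ij}=M_{ji}$ and the independence of the distinct unordered pairs $\{i,j\}$, both of which are automatic once the marks are drawn, once and for all, along strictly ordered pairs as above.
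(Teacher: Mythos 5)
Your argument is correct and follows the same route the paper intends: the paper does not write out a proof of Lemma \ref{kps} at all, but merely asserts just before the statement that it ``is implied directly by Proposition~\ref{kpst}.'' Your marking construction (attaching i.i.d.\ uniform $[0,1]$ marks to the atoms of the $O_{ij}$ from Proposition~\ref{kpst}, invoking the marking theorem to identify the resulting $M_{ij}$ as Poisson with the enriched intensity, and then observing that the $\alpha$-coordinate drops out of the integrand so the two SDEs coincide pathwise) is exactly the reduction the paper leaves implicit, and you correctly handle uniqueness in law by noting that projecting $M_{ij}$ onto its $(s,z,\varphi)$ coordinates recovers a Poisson measure of the Proposition~\ref{kpst} type, so any solution of \eqref{ps} is also a solution of \eqref{pssde}.
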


For $K\ge1$, according to Lemma \ref{ww2p}, we have the following formulation for the cutoff Boltzmann processes with the optimal coupling process.
\begin{lem}\label{coupling1}
For $\gamma\in(0,1]$, suppose \eqref{cs}, \eqref{c4}, \eqref{c2hs} or \eqref{c2} and let $f_0\in\cP_2(\rd)$ satisfy \eqref{c5}. Consider $N\geq 2$ and $K\ge 1$ fixed. Let  $(f_t^K)_{t\ge0}$ be the corresponding unique solution to the cutoff Boltzmann equation. $\Pi^{ij,K}(\bW^K,\alpha)$ is the measurable function introduced in Lemma \ref{ww2p}. And the Poisson measures $(M_{ij})_{i,j=1,\cdots,N}$ are introduced in Lemma \ref{kps}. Then for $i=1,\cdots,N$,
\beq\label{tbp2}
W^{i,K}_t=V^i_0 + \sum_{j=1}^N\intot\int_0^1\int_0^\infty\int_0^{2\pi} 
c_K(W^{i,K}_\sm,\Pi_s^{ij,K}(\bW^K_\sm,\alpha),z,\varphi+\varphi_{i,j,s}) M_{ij}(ds,d\alpha,dz,d\varphi),
\eeq
where $\varphi_{i,j,s}:=\varphi_0(V^{i}_\sm-V^{j}_\sm,W^{i,K}_\sm-\Pi_s^{ij,K}(\bW^K_\sm,\alpha))$,
has a  c\`{a}dl\`{a}g adapted unique strong solution $(W_t^{i,K})_{t\ge0}$. In particular, $\bW^K=(W^{1,K},\dots,W^{N,K})\in(\rd)^N_\bullet$, and  for each $t\geq 0$, $(W^{i,K}_t)_{i=1,\dots,N}$ are  with common law $f_t^K$. 
\end{lem}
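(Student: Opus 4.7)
The plan is to exploit the cutoff in $c_K$ to obtain a strong solution by a recursive construction, and then to identify the marginal law of each $W^{i,K}_t$ with $f_t^K$ by writing the evolution of $\mathrm{Law}(W^{1,K}_t)$ in weak form and matching it with the cutoff Boltzmann equation, whose uniqueness is recalled in Remark \ref{cutbp}.

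For strong well-posedness, observe that $c_K$ vanishes for $z>K$, so the effective contribution of each $M_{ij}$ on $[0,T]$ is supported in the bounded set $[0,T]\times[0,1]\times[0,K]\times[0,2\pi)$, whose intensity is $2\pi TK/(N-1)$. Hence only finitely many jumps of the system occur almost surely on $[0,T]$; between consecutive jumps each $W^{i,K}$ is constant, and at each jump time determined by some $M_{ij}$ the two coordinates $(W^{i,K},W^{j,K})$ are updated using the measurable maps $\Pi^{ij,K}_s$ and $\Pi^{ji,K}_s$. This recipe produces a unique c\`adl\`ag adapted solution; by symmetry of the initial data and of the driving measures under permutations of indices, $(W^{i,K}_t)_{i=1,\dots,N}$ is exchangeable. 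Lemma \ref{ww2p} requires $\bW^K_{s-}\in(\rd)^N_\bullet$ at each jump time: since $f_s^K$ has a density for $s>0$ by Remark \ref{cutws}, every newly drawn coordinate is a.s.\ distinct from the others, and an induction over jump times validates the condition up to a null set (if $f_0$ has atoms one simply extends $\Pi^{ij,K}$ as any measurable map off $(\rd)^N_\bullet$, since the negligible exceptional set does not affect expectations beyond time zero).

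The core step is the identification of the marginal law. For bounded measurable $\phi:\rd\mapsto\rr$, It\^o's formula applied to \eqref{tbp2}, together with the $2\pi$-periodicity of $c_K(v,v_*,z,\cdot)$ in $\varphi$ that absorbs the random phase $\varphi_{1,j,s}$ after integration against $d\varphi$, yields
\[
\E[\phi(W^{1,K}_t)]-\E[\phi(V^1_0)]
= \intot\E\Big[\sum_{j\ne 1}\frac{1}{N-1}\int_0^1 F_{W^{1,K}_s}\big(\Pi^{1j,K}_s(\bW^K_s,\alpha)\big)\,d\alpha\Big]\,ds,
\]
where $F_w(u):=\int_0^\infty\!\int_0^{2\pi}[\phi(w+c_K(w,u,z,\varphi))-\phi(w)]\,d\varphi\,dz$. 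Conditioning on $W^{1,K}_s$, the remaining coordinates $(W^{k,K}_s)_{k\ne 1}$ form a conditionally exchangeable tuple, and the averaged form of Lemma \ref{ww2p}(ii)---whose proof rests only on the identity $R_{s,\bar\mu^{1}_{\bW^K_s}}(\cdot,\rd)=f_s^K$ and therefore passes to conditional laws---applied with the (bounded measurable in $u$) test function $u\mapsto F_{W^{1,K}_s}(u)$ gives that the conditional expectation above equals $\int_{\rd}F_{W^{1,K}_s}(u)f_s^K(du)$. Taking a full expectation and unfolding $F$ identifies $t\mapsto\mathrm{Law}(W^{1,K}_t)$ with a weak solution of the cutoff Boltzmann equation starting from $f_0$; uniqueness (Remark \ref{cutbp}) then forces $\mathrm{Law}(W^{1,K}_t)=f_t^K$, and exchangeability propagates the statement to every $i$.

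The main obstacle will be this last collapse: the integrand mixes $W^{1,K}_s$ inside the test function with $\Pi^{1j,K}_s(\bW^K_s,\alpha)$, so Lemma \ref{ww2p}(ii) cannot be invoked unconditionally and the conditional application must be justified by carefully using the exchangeability of $\bW^K_s$ as a whole. Once this conditional identity is in place, the remaining ingredients---rotation invariance in $\varphi$, the finite-jump recursive construction, and the weak uniqueness for the cutoff equation from Remark \ref{cutbp}---are routine.
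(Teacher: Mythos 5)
Your proof is correct and takes a genuinely different route from the paper. The paper works entirely at the level of Poisson random measures: it aggregates the $M_{ij}$ into a single measure $Q_i$, then defines the pushforward point measure $Q_i^*$ with atoms $(s,\Pi^{i\lfloor\xi\rfloor}_s(\bW^K_{s-},\xi-\lfloor\xi\rfloor),z,\varphi)$, and shows by a Laplace-functional calculation that $Q_i^*$ is itself Poisson with intensity $ds\,f_s^K(dv)\,dz\,d\varphi$. This identifies $W^{i,K}$ directly as a strong solution of the cutoff SDE \eqref{cbp}, so the conclusion follows from uniqueness in law there. You instead apply It\^o's formula to $\E[\phi(W^{1,K}_t)]$, absorb the random phase $\varphi_{1,j,s}$ by $2\pi$-periodicity, and collapse the $j$-average against $f_s^K$; this shows $\mathrm{Law}(W^{1,K}_t)$ solves the (linear, $f^K$-driven) forward Kolmogorov equation and concludes by uniqueness for that linear problem. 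The two routes buy roughly the same thing, but the paper's construction of $Q_i^*$ is more transparent about \emph{why} $W^{i,K}$ is a cutoff Boltzmann process, while yours is shorter.

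Two small remarks on your write-up. First, the obstacle you flag at the end is not actually an obstacle: summing over $j\ne i$ in Lemma \ref{ww2p}(ii) gives a \emph{pathwise} identity
\[
\frac{1}{N-1}\sum_{j\ne i}\int_0^1\phi\bigl(\Pi_s^{ij,K}(\bw,\alpha)\bigr)\,d\alpha
= \int_{\rd}\phi(u)\,f_s^K(du),
\qquad \forall\,\bw\in(\rd)^N_\bullet,
\]
because the left-hand side rewrites as $(N-1)\sum_{j\ne i}\int_{\rd}\phi(u)\,R_{s,\bar\mu^i_{\bw}}(du,\{w_j\})=\int_{\rd}\phi(u)\,R_{s,\bar\mu^i_{\bw}}(du,\rd)$ and the first marginal of the coupling is $f_s^K$. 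So you may plug in $\phi=F_{w_1}$ realization by realization, and no conditioning argument or appeal to exchangeability is required at this step. Second, what you obtain is the $f^K$-linearized Kolmogorov equation, not the nonlinear cutoff Boltzmann equation: the collision integral has $f_s^K(du)$, not $g_s(du)$, in the background slot. Your citation of Remark \ref{cutbp} (the linear SDE and hence its forward equation) is therefore the right one, but the phrase ``weak solution of the cutoff Boltzmann equation'' is misleading and should be replaced by ``solution of the forward equation of the $f^K$-linearized jump process.''
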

\begin{proof}
The  existence of $\{W^{i,K}_t\}_{t\ge0}$  for $i=1,...,N$ to \eqref{tbp2} is obvious. To  obtain the uniqueness in law, it is sufficient to show that  for fixed $i=1,...,N$, $\{W^{i,K}_t\}_{t\ge0}$ is equivalent to $(W_t^K)_{t\ge0}$ determined by \eqref{cbp} in law in  Remark \ref{cutbp}. We define a family of random measure $(Q_i(ds,d\xi,dz,d\varphi))_{1\le i\le N}$ on 
$[0,\infty)\times [0,N]\times[0,\infty)\times [0,2\pi)$. For any measurable set $A_1\subseteq [0,\infty),$ $A_2\subseteq [0,N],$ $A_3\subseteq [0,\infty),$ $A_4\subseteq [0,2\pi),$
\begin{align*}
    Q_i(A_1,A_2,A_3,A_4)= \sum_{j=1}^N M_{ij}(A_1,j-(A_2\cap(j-1,j]),A_3,A_4).
\end{align*}
It's not hard to see that $ Q_i(ds,d\xi,dz,d\varphi)$ is Poisson measure on $[0,\infty)\times [0,N]\times[0,\infty)\times [0,2\pi)$ with intensity
$ds dz d\varphi d\xi\indiq_{D^i}(\xi)/(N-1)$, independent of $(V_0^i)_{i=1,\dots N}$, where $D^i:=[0,N]\setminus[i-1,i)$. Then we can rewrite
\begin{align*}
W^{i,K}_t=&V^i_0 + \intot\int_0^N\int_0^\infty\int_0^{2\pi} 
c(W^{i,K}_\sm,\Pi_s^{i}(\bW^K_\sm,\xi),z,\varphi+\varphi_{i,j,s}) Q_i(ds,d\xi,dz,d\varphi),
\end{align*}
where $\Pi_s^{i}(\bW^K_\sm,\xi)=\Pi_s^{i\lfloor\xi\rfloor}(\bW^K_\sm,\xi-\lfloor\xi\rfloor)$. Next, we define $Q^{*}_i(ds,dv,dz,d\varphi)$ to be the point measure on $[0,\infty)\times \rd\times[0,\infty)\times [0,2\pi)$ with atoms $(s,\Pi_s^{i\lfloor\xi\rfloor}(\bW^K_\sm,\xi-\lfloor\xi\rfloor),z,\varphi),$ which means,
for any measurable set $B\subseteq[0,\infty)\times \rd\times[0,\infty)\times [0,2\pi)$, that
\begin{align*}
   Q_i^{*}(B):=  Q_i\Big(\big\{(s,\xi,z,\varphi)|(s,\Pi_s^{i}(\bW^K_\sm,\xi),z,\varphi)\in B\big\}\Big).
\end{align*}
 It's clear that $ W^{i,K}_t$ is only determined by $\{Q^{*}_i(ds,dv,dz,d\varphi)\}$ and $V_0^i$. Since the $\xi-$law of $\Pi_t^{i}(\bW^K_\sm,\xi)$ is $f_t^K$ for $t\ge0$,  $\varphi_{i,j,s}\in[0,2\pi)$ and $c_K$ is a periodic function with a period of $2\pi$.  Thus we can say that  $Q^{*}_i(ds,dv,dz,d\varphi)$ is a Poisson measures with the same intensity $dsf_s^K(dv)dzd\varphi$ by computing the corresponding Laplace functional using It\^{o}'s formula. Hence, for $i=1,...,N$ and each $t\ge0$, $ W^{i,K}_t$ is a cutoff Boltzmann process with a unique common law $f_t^K$.
 \end{proof}
 
\begin{rem}
Since $(V^i_0)_{i=1,\dots,N}$ and $(M_{ij})_{i,j=1,\dots,N}$ are exchangeable and  for each $i=1,...,N$, $W_t^{i,K}$  and $V_t^{i}$ are both unique in law, the family $\Big((W^{1,K}_t,V^{1}_t)_{t\geq 0},...,(W^{N,K}_t,V^{N}_t)_{t\geq 0}\Big)$ is exchangeable.
\end{rem}

\section{The Second coupling}
\setcounter{equation}{0} 
In this section, we will prove our main results  using the second coupling method introduced in \cite{MR3476628,MR3769742,MR3621429} where the authors deal with the Kac's model for the Boltzmann equation and the Landau equation respectively.
We first introduce some more notations. We define 
\begin{equation}\label{bestrate}
\e^{K}_t(f_t^K):= \E \left[\cW_2^2\left(f_t^K,\frac{1}{N-1}\sum_{i=2}^N \delta_{W^{i,K}_t} \right)\right]
\end{equation}
where $(W^{i,K}_t)_{i=1,...,N, t\ge0}$ were defined in Lemma \ref{coupling1}. By Lemma \ref{ww2p} and the exchangeability, it's clear that for all $i=1,\cdots,N$,
\begin{align*}
  \E \left[  \int_0^1 \frac{1}{N-1}\sum_{j\ne i}^N|\Pi_t^{ij,K}(\bW^K_t,\alpha))-W^{j,K}_t|^2 d\alpha \right]=\e^{K}_t(f_t^K).
\end{align*}
We also denote that 
\begin{equation}\label{bestrate1}
\mu^{N,K}_{\bW^K_t}:= \frac{1}{N}\sum_{i=1}^N \delta_{W^{i,K}_t}.
\end{equation}

\subsection{Estimate of the Wasserstein distance}
We can now prove the following important intermediate result. 

\begin{prop}\label{mrl} 
For $\gamma\in(0,1]$, suppose the collision kernel  $B$ satisfying \eqref{cs}, \eqref{c4}, \eqref{c2hs} or \eqref{c2}. Let $f_0\in\cP_2(\rd)$ not be a Dirac mass and satisfy \eqref{c5}. For each $K\ge1$, consider the unique cutoff solution $(f_t^K)_{t\geq 0}$ introduced  in Remark \ref{cutws} and for each $N\ge1$, the unique Markov process $(V^{i}_t)_{i=1,\dots,N,t\geq 0}$  defined in Lemma \ref{kps}.
Set $\mu^{N}_t:=N^{-1}\sum\limits_{i=1}^{N}\delta_{V^{i}_t}$.

(i) Hard potentials. Assume that \eqref{cs}, \eqref{c2} with $\gamma\in(0,1)$. For all $\e\in(0,1)$, all $T\geq0$, 
there is a constant $C_{\e,T}$ such that for all $N\geq 1$, all $K\in[1,\infty]$,
\begin{align}\label{fc31}
\sup_{[0,T]}\E[\cW_2^2(\mu^{N}_t,f_t)] \leq& C_{\e,T} \left(\sup_{[0,T]}\e^K_t(f_t^K)
+ K^{1-2/\nu} \right)^{1-\e}+\frac{C}{N}.
\end{align}

(ii) Hard spheres. Assume finally that $\gamma=1$, \eqref{cs} and  \eqref{c2hs} and that $f_0$ has a density.
For all $\e\in(0,1)$, all $T\geq0$, all $q\in (1,p_0)$, 
there is a constant $C_{\e,q,T}$ such that for all $N\geq 1$, all $K\in[1,\infty)$,
\begin{align}\label{fc51}
\sup_{[0,T]}\E[\cW_2^2(\mu^{N}_t,f_t)] \leq& C_{\e,q,T} \left( K^{2\e} \left(\sup_{[0,T]}\e^K_t(f_t^K)\right)^{1-\e/2}
+ K^{1-2/\nu} \right)+\frac{C}{N} .
\end{align}

\end{prop}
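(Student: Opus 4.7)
The plan is a triangle inequality plus an It\^o--Gr\"onwall analysis of the coupling $u^K_t:=\E[|V^1_t-W^{1,K}_t|^2]$, where $V^i$ and $W^{i,K}$ are the two systems of Lemmas \ref{kps} and \ref{coupling1}, driven by the \emph{same} Poisson measures $M_{ij}$. Starting from $\cW_2^2(\mu_\bv,\mu_\bw)\le N^{-1}\sum_i|v_i-w_i|^2$, the fact that passing from $\mu^{N,K}_{\bW^K_t}$ to the leave-one-out $\bar\mu^1_{\bW^K_t}:=(N-1)^{-1}\sum_{i\ge 2}\delta_{W^{i,K}_t}$ costs only $O(1/N)$ in $\cW_2^2$ (by coupling the stray atom $N^{-1}\delta_{W^{1,K}_t}$ to the leave-one-out, using the uniform second-moment bound on $f_t^K$), the identity $\E[\cW_2^2(\bar\mu^1_{\bW^K_t},f_t^K)]=\e^K_t(f_t^K)$ that follows from Lemma \ref{ww2p}(i) and exchangeability, and the estimate $\cW_2^2(f_t^K,f_t)\le CtK^{1-2/\nu}$ of Remark \ref{cutws}, one reduces to $\E\,\cW_2^2(\mu^N_t,f_t)\le C\bigl(u^K_t+\e^K_t(f_t^K)+K^{1-2/\nu}+N^{-1}\bigr)$, so it suffices to control $u^K_t$.

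Apply It\^o's formula to $|V^i_t-W^{i,K}_t|^2$ using the SDEs \eqref{ps} and \eqref{tbp2} and take expectation against the intensity $(N-1)^{-1}ds\,d\alpha\,dz\,d\varphi$. The phase shift $\varphi_{i,j,s}=\varphi_0(V^i_\sm-V^j_\sm,W^{i,K}_\sm-\Pi^{ij,K}_s)$ is precisely what Lemma \ref{fundest}(i) is designed to absorb, so that with $(v,v_*,\tv,\tv_*)=(V^i_s,V^j_s,W^{i,K}_s,\Pi^{ij,K}_s(\bW^K_s,\alpha))$ one obtains
\[u^K_t\ \le\ \frac{1}{N-1}\sum_{j=1}^{N}\int_0^t\!\int_0^1\E\bigl[I_1^K+I_2^K+I_3^K\bigr]d\alpha\,ds.\]
The residual $I_3^K$ is dispatched by Lemma \ref{further}(iii) (respectively \ref{furtherhs}(iii)) combined with the uniform polynomial moments of Proposition \ref{momentest} for the $V^j_s$ and the exponential moments of $f^K$ (Theorem \ref{wp} via Remark \ref{cutws}), giving a contribution of order $K^{1-2/\nu}$ in the hard-potential case and of order $K^{-p}$ with $p$ at one's disposal in the hard-sphere case.

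The crux is $I_1^K+I_2^K$. For $I_1^K$: Lemma \ref{further}(i) (resp.\ \ref{furtherhs}(i)), together with $|V^j-\Pi^{ij,K}|^2\le 2|V^j-W^{j,K}|^2+2|W^{j,K}-\Pi^{ij,K}|^2$, summation over $j$, exchangeability, and Lemma \ref{ww2p}(i) (which turns the averaged $|W^{j,K}-\Pi^{ij,K}|^2$ into $\e^K_s$), delivers a bound of the form $CM\bigl(u^K_s+\e^K_s\bigr)+C_q e^{-M^{q/\gamma}}$, the exponential weight being controlled by the exponential moments of $f^K$. For $I_2^K$: apply Lemma \ref{further}(ii) (resp.\ \ref{furtherhs}(ii)) first with $z_*=W^{j,K}_s$ to swap $\Pi^{ij,K}$ for $W^{j,K}$, generating an $|\Pi^{ij,K}-W^{j,K}|^2$-weighted error (controlled by $\e^K_s$ after the polynomial weight factors $(1+|\cdot|)^2$ are expectation-bounded via Cauchy--Schwarz and Proposition \ref{momentest}); then a second time with $z_*=V^j_s$ to swap $W^{j,K}$ for $V^j$, producing a $|V^j-W^{j,K}|^2$-weighted error (controlled by $u^K_s$). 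The resulting $I_2^K(V^i,V^j,W^{i,K},V^j)$ has matched last arguments, and since $I_1^K\ge 0$, Lemma \ref{IK247} bounds it by $CMu^K_s+C_q e^{-M^{q/\gamma}}$.

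Collecting yields, in the hard-potential case, the differential inequality
\[u^K_t\ \le\ C\int_0^t\bigl[Mu^K_s+\e^K_s+K^{1-2/\nu}+e^{-M^{q/\gamma}}\bigr]ds,\]
so that Gr\"onwall gives $u^K_t\le e^{CMT}\bigl[CT\sup_{[0,T]}\e^K_s+CTK^{1-2/\nu}+CTe^{-M^{q/\gamma}}\bigr]$. Choosing $M^{q/\gamma}\asymp\log\bigl((\sup\e^K_s+K^{1-2/\nu})^{-1}\bigr)$ with $q\in(\gamma,p_0)$ close enough to $p_0$ trades the prefactor $e^{CMT}$ for an arbitrarily small polynomial loss, producing \eqref{fc31}. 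The hard-sphere case is parallel, but Lemma \ref{furtherhs}(i)--(ii) carry an extra factor $K$ in their exponential remainders, which upon optimising $M$ generates the $K^{2\e}$ prefactor and the weaker exponent $1-\e/2$ of \eqref{fc51}. The main obstacle is the second step: each coordinate swap in $I_2^K$ creates polynomially weighted quadratic remainders whose expectations must be controlled uniformly in $N,K,T$, and this is precisely what the uniform polynomial moments of Proposition \ref{momentest} combined with the exponential moments of $f^K$ are designed to make possible.
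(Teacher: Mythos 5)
Your plan matches the paper's in structure (triangle inequality, It\^o plus Gr\"onwall on $u^K_t=\E|V^1_t-W^{1,K}_t|^2$, decomposition into $I_1^K,I_2^K,I_3^K$, and the same treatment of $I_1^K$ and $I_3^K$), but your handling of $I_2^K$ diverges and contains a genuine gap.

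The paper does a \emph{single} coordinate swap. It introduces $\tilde B^K_2(s):=\frac{1}{N-1}\sum_{j\ge 2}\int_0^1\E[I_2^K(V^1_s,V^j_s,W^{1,K}_s,W^{j,K}_s)]\,d\alpha$ and shows $\tilde B^K_2(s)=0$ by combining exchangeability of the pairs $(V^i,W^{i,K})$ with the anti-symmetry $I_2^K(v,v_*,\tv,\tv_*)+I_2^K(v_*,v,\tv_*,\tv)=0$. Consequently $B_2^K(s)=B_2^K(s)-\tilde B^K_2(s)$ and Lemma \ref{further}(ii) with $z_*=W^{j,K}_s$ is invoked only once; the resulting weighted error involves only $|\Pi_s^{1j,K}-W^{j,K}_s|^2(1+\cdots)^2$, which after H\"older becomes $(\e^K_s)^{1-\e/2}$ --- a \emph{known} quantity --- and the Gr\"onwall stays linear in the unknown $u^K_s$.

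You instead propose two swaps, $\Pi\to W^{j,K}\to V^j$, followed by Lemma \ref{IK247} on the matched-argument remainder $I_2^K(V^1,V^j,W^{1,K},V^j)$. The second swap produces, via Lemma \ref{further}(ii) with $z_*=V^j_s$, the term $|V^j_s-W^{j,K}_s|^2\,(1+|V^1_s|+|V^j_s|+|W^{1,K}_s|+|W^{j,K}_s|)^2$, and you assert it is ``controlled by $u^K_s$.'' That is not so: the polynomial weight is unbounded and correlated with $|V^j_s-W^{j,K}_s|^2$, so the expectation is not $O(u^K_s)$. The H\"older device that works for the $|\Pi-W^{j,K}|$ term would here yield $(u^K_s)^{1-\e/2}$, i.e.\ a power of the \emph{unknown}; this makes the Gr\"onwall inequality nonlinear and, because $u^K_s$ is only known to be $O(1)$ a priori, the resulting bound no longer tends to zero with $K$ and $N$. (Note also that you never exploit the identity $D_2+I_2^K(V^1,V^j,W^{1,K},V^j)=I_2^K(V^1,V^j,W^{1,K},W^{j,K})$, whose sum over $j$ vanishes in expectation --- this cancellation is exactly what the paper's $\tilde B^K_2=0$ trick captures and what your term-by-term bounding throws away.) Lemma \ref{IK247} is needed in the paper only for the decoupling Lemma \ref{thele}, not here. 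To repair your argument, replace the two-swap step by the paper's observation that $\tilde B^K_2\equiv 0$ and perform a single swap $z_*=W^{j,K}_s$; the rest of your proof then goes through as written.
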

We first introduce a lemma which will be needed to prove Proposition \ref{mrl}.

\begin{lem} \label{tem}
For $K\ge1$, recall that $\e^{K}_t(f_t^K)$  and $\mu^{N,K}_{\bW^K_t}$ defined in $(\ref{bestrate})$ and $(\ref{bestrate1}),$ then we have 
\begin{align*}
    \E[\cW_2^2(\mu^{N,K}_{\bW^K_t}, f_t^K)]\le \frac{N-1}{N}\e^{K}_t(f_t^K)+\frac{C}{N}.
\end{align*}
\end{lem}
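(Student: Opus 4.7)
The plan is to reduce $\cW_2^2(\mu^{N,K}_{\bW^K_t}, f_t^K)$ to the quantity $\e^K_t(f_t^K)$ plus a negligible ``self-distance'' remainder by exploiting the trivial arithmetic decomposition
\[
\mu^{N,K}_{\bW^K_t} \;=\; \frac{N-1}{N}\,\bar\mu^{\,1}_t \;+\; \frac{1}{N}\,\delta_{W^{1,K}_t},
\qquad
\bar\mu^{\,1}_t := \frac{1}{N-1}\sum_{j=2}^N \delta_{W^{j,K}_t}.
\]
The random measure $\bar\mu^{\,1}_t$ is precisely the one featuring in the definition \eqref{bestrate} of $\e^K_t(f_t^K)$, so it is natural to compare the above decomposition of $\mu^{N,K}_{\bW^K_t}$ with the trivial decomposition $f_t^K = \tfrac{N-1}{N} f_t^K + \tfrac{1}{N} f_t^K$ of the target.

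The key step will be to invoke the joint convexity of the squared Wasserstein distance: for probability measures $m_1,m_2,n_1,n_2$ on $\rd$ and any $\alpha\in[0,1]$,
\[
\cW_2^2(\alpha m_1 + (1-\alpha) m_2,\; \alpha n_1 + (1-\alpha) n_2) \;\leq\; \alpha\,\cW_2^2(m_1,n_1) + (1-\alpha)\,\cW_2^2(m_2,n_2),
\]
which is immediate by mixing the two optimal couplings. Applied pointwise with $\alpha = (N-1)/N$, this yields
\[
\cW_2^2(\mu^{N,K}_{\bW^K_t}, f_t^K) \;\leq\; \frac{N-1}{N}\,\cW_2^2(\bar\mu^{\,1}_t, f_t^K) \;+\; \frac{1}{N}\,\cW_2^2(\delta_{W^{1,K}_t}, f_t^K),
\]
and taking expectation the first summand becomes exactly $\tfrac{N-1}{N}\,\e^K_t(f_t^K)$ by definition.

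For the remaining term I would use the identity $\cW_2^2(\delta_x,\mu)=\int|x-v|^2\mu(dv)$ together with the fact from Lemma~\ref{coupling1} that $W^{1,K}_t$ has law $f_t^K$, which gives
\[
\E\bigl[\cW_2^2(\delta_{W^{1,K}_t}, f_t^K)\bigr] \;=\; \iint |w-v|^2\, f_t^K(dw)\,f_t^K(dv) \;\leq\; 2\,m_2(f_t^K).
\]
Since the second moment is conserved for $(f_t^K)_{t\geq 0}$ (Remark~\ref{cutws} together with \eqref{cons}), the right-hand side is bounded by $2\,m_2(f_0)$ uniformly in $t$, $K$ and $N$, producing the advertised $C/N$ contribution. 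There is no serious obstacle in this argument; it is a one-line consequence of the convexity of $\cW_2^2$. The only point worth noting is that by the exchangeability of the family $(W^{i,K}_t)_{i=1,\dots,N}$ established after Lemma~\ref{coupling1}, singling out the index $i=1$ in the decomposition is harmless: removing any other index would yield the same expected Wasserstein cost.
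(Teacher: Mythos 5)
Your proof is correct and follows essentially the same route as the paper's: the identical decomposition of $\mu^{N,K}_{\bW^K_t}$ into the weighted sum $\frac{N-1}{N}\bar\mu^1_t + \frac{1}{N}\delta_{W^{1,K}_t}$, the same joint convexity of $\cW_2^2$ under mixtures (proved in the paper via the Bernoulli-mixing coupling), and the same conclusion that $W^{1,K}_t\sim f_t^K$ makes the residual $\frac{1}{N}$-term bounded by a constant times $m_2(f_0)$. The only cosmetic difference is that you evaluate $\cW_2^2(\delta_x,\mu)$ exactly as $\int|x-v|^2\mu(dv)$, whereas the paper uses the cruder bound $2\E[|W_t^{1,K}|^2]+2m_2(f_t^K)$; both give $O(1)$.
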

\begin{proof}
It  follows mainly from  \cite[Lemma 2.1-step 1]{MR3621429}. Reecall that for $f,f',g,g'\in\cP_2(\rd)$ and $\lambda\in(0,1)$, it holds that
\beq\label{wline}
\cW_2^2(\lambda f + (1-\lambda) g,\lambda f' + (1-\lambda) g')\leq \lambda \cW_2^2(f,f')+(1-\lambda)\cW_2^2(g,g').
\eeq
Indeed, consider $X\sim f$ and $X'\sim f'$ satisfying $\E[|X-X'|^2]=\cW_2^2(f,f')$,  $Y\sim g$ and $Y'\sim g'$ 
satisfying  $\E[|Y-Y'|^2]=\cW_2^2(g,g')$, and a Bernoulli random variable $U\sim$ Bernoulli$(\lambda)$, with $(X,X'),(Y,Y'),U$ independent. Set $Z:=UX+(1-U)Y\sim \lambda f + (1-\lambda) g$, $Z':=UX'+(1-U)Y'\sim \lambda f' + (1-\lambda) g'$.
It's easy to verify that $\E[|Z-Z'|^2]=\lambda\E[|X-X'|^2]+(1-\lambda)\E[|Y-Y'|^2]
=\lambda \cW_2^2(f,f')+(1-\lambda)\cW_2^2(g,g')$.
Thus, we have
\begin{align*}
    \E[\cW_2^2(\mu^{N,K}_{\bW^K_t}, f_t^K)]&=\E\left[\cW^2_2\Big(\frac{\delta_{W^{1,K}_t}}{N}+\frac{N-1}{N}\frac{1}{N-1}\sum_{i=2}^N \delta_{W^{i,K}_t},\frac{f_t^K}{N}+\frac{N-1}{N}f_t^K\Big)\right]\\
   & \le \frac{1}{N}\E\left[\cW^2_2(\delta_{W^{1,K}_t},f_t^K)\right]+\frac{N-1}{N}\E\left[\cW^2_2\Big(\frac{1}{N-1}\sum_{i=2}^N \delta_{W^{i,K}_t},f_t^K\Big)\right]\\
   &\le \frac{C}{N}+\frac{N-1}{N}\e^{K}_t(f_t^K).
\end{align*}
 The last inequality attributes to the fact that  $\E\left[\cW^2_2(\delta_{W^{1,K}_t},f_t^K)\right]\le 2\E[|W_t^{1,K}|^2]+2\int_\rd |v|^2f_t^K(dv)=4\int_\rd |v|^2f_t^K(dv)=4\int_\rd |v|^2f_0(dv)<\infty$.
\end{proof}

We now move to see

\begin{proof}[Proof of Proposition \ref{mrl}-(i)]  Assume $\gamma\in(0,1)$.
Consider $f_0\in \cP_2(\rd)$ satisfying \eqref{c5}.
We also assume that $f_0$ is not a Dirac mass, so that $f_t$ has a density for all $t>0$.
Consider $N\geq 1$ and $K\in [1,\infty)$ fixed and 
the coupled processes introduced in Lemma \ref{kps} and  Lemma  \ref{coupling1}.

\vip

{\it Step} 1. Using the It\^o's formula, we  have
\begin{align*}
&\E[|V^{1}_t-W^{1,K}_t|^2]\\
=& \frac{1}{N-1}\sum_{j=2}^N\intot \int_0^1 \int_0^\infty \int_0^{2\pi} \E\Big[ |V^{1}_s-W^{1,K}_s + \Delta^{1,j}(s,\alpha,z,\varphi) |^2 - 
 |V^{1}_s-W^{1,K}_s|^2 \Big] d\varphi dz d\alpha ds,
\end{align*}
where
\begin{align*}
\Delta^{1,j}(s,\alpha,z,\varphi)=c(V^{1}_s,V_s^{j},z,\varphi)-c_K(W^{1,K}_s,\Pi_s^{1j,K}(\bW^K_\sm,\alpha),z,\varphi+\varphi_{1,j,s}).
\end{align*}
Using Lemma \ref{fundest}-(i) with $v=V_s^1, v_*=V_s^j, \tv=W_s^{1,K}, \tv_*=\Pi_s^{1j,K}(\bW^K_\sm,\alpha)$,  we have
\begin{align*}
\E[|V^{1}_t-W^{1,K}_t|^2] \leq  \intot [B_1^K(s)+B_2^K(s)+B_3^K(s) ]ds,
\end{align*}
where for $i=1,2,3$,
\begin{align*}
&B_i^K(s):= \frac{1}{N-1} \sum^N_{j=2}\int_0^1 \E \Big[I_i^K\big(V_s^1,V_s^{j},W^{1,K}_s,\Pi_s^{1j,K}(\bW^K_\sm,\alpha)\big)\Big] d\alpha.
\end{align*}

\vip
{\it Step} 2. Using Lemma \ref{further}-(i),
we see that for all $M\geq 1$ and fixed $q\in(\gamma, p_0)$,
\begin{align*}
B_1^K(s) \leq& \frac{M}{N-1} \sum_{j=2}^N\int_0^1 \E\left[ |V^{1}_s-W^{1,K}_s|^2+ |V_s^{j} -
\Pi_s^{1j,K}(\bW^K_\sm,\alpha)|^2 \right] d\alpha \\
&+  \frac{Ce^{-M^{q/\gamma}}}{N-1} \sum_{j=2}^N\E \left[ \int_0^1 \exp{\left(C\Big(|W^{1,K}_s|^{q}+|\Pi_s^{1j,K}(\bW^K_\sm,\alpha)|^{q} \Big)\right) }d\alpha \right] \\
\leq & \frac{M}{N-1} \sum_{j=2}^N\int_0^1 \E\left[ |V^{1}_s-W^{1,K}_s|^2+ |V_s^{j} -
\Pi_s^{1j,K}(\bW^K_\sm,\alpha)|^2 \right] d\alpha + Ce^{-M^{q/\gamma}}.
\end{align*}
To obtain the last inequality, we use  Lemma \ref{ww2p}-(ii) and that $W^{1,K}_s$ is $f_s^K$-distributed, whence for any $j=2,...,N$,
\begin{align}\label{nones}
&\E \left[ \int_0^1 \exp{\Big(C\Big(|W^{1,K}_s|^{q}+|\Pi_s^{1j,K}(\bW^K_\sm,\alpha)|^{q} \Big)\Big) } d\alpha \right]\notag \\
&\le \E \left[ \int_0^1 \exp{\Big(2C|W^{1,K}_s|^{q}\Big) } d\alpha\right]^{1/2} \E \left[ \int_0^1 \exp{\Big(2C|\Pi_s^{1j,K}(\bW^K_\sm,\alpha)|^{q} \Big)}  d\alpha\right]^{1/2} \\
&=\int_\rd e^{2C|v|^q}f_s^K(dv)<\infty \notag,
\end{align}
by  \eqref{momex}.

\vip

{\it Step} 3. In this step, we will study $B^K_2$.  We introduce 
$$
\tB^K_2(s):=\frac{1}{N-1}\sum_{j=2}^N\int_0^1\E\left[ I_2^K(V^{1}_s,V^{j}_s,W^{1,K}_s, W^{j,K}_s) \right] d\alpha
$$
and observe that  $\tB^K_2(s)=0$. Indeed,
\begin{align*}
\tB^K_2(s)= \E\left[ I_2^K(V^{1}_s,V^{2}_s,W^{1,K}_s, W^{2,K}_s) \right]
\end{align*}
by exchangeability.
And using again exchangeability, we have
\begin{align*}
\tB^K_2(s)=\frac{1}{2}\E\left[ I_2^K(V^{1}_s,V^{2}_s,W^{1,K}_s, W^{2,K}_s, 
) \right]
+ \frac{1}{2}\E\left[ I_2^K(V^{2}_s,V^{1}_s,W^{2,K}_s, W^{1,K}_s) \right].
\end{align*}
The symmetry of $I_2^K$, i.e.  $I_2^K(v,v_*,\tv,\tv_*)+I_2^K(v_*,v,\tv_*,\tv)=0$ implies $\tB^K_2(s)=0$. 
Whence, 
\begin{align*}
B_2^K(s)= &\frac{1}{N-1}\sum^N_{j=2}\int_0^1 \E\Big[I_2^K(V^{1}_s, V^{j}_s, W^{1,K}_s,\Pi_s^{1j,K}(\bW^K_\sm,\alpha))  \\
& \hskip4cm-I_2^K(V^{1}_s,V^{j}_s,W^{1,K}_s, W^{j,K}_s)  \Big] d\alpha.
\end{align*}
Consequently, according to Lemma \ref{further}-(ii) with $z_*=W_s^j$, we have
\begin{align*}
B_2^K(s)\leq & \frac{C}{N-1}\sum_{j=2}^N\int_0^1 \E\Big[ |V^{1}_s- W^{1,K}_s|^2+ |V_s^{j} - \Pi_s^{1j,K}(\bW^K_\sm,\alpha)|^2\\
&\hskip2.5cm  + |\Pi_s^{1j,K}(\bW^K_\sm,\alpha)- W^{j,K}_s|^2\\
&\hskip 2.7cm\times (1+|V^{1}_s|+|V^{j}_s|+ |W^{1,K}_s|+|\Pi_s^{1j,K}(\bW^K_\sm,\alpha)|+|W^{j,K}_s| )^2\Big] d\alpha. 
\end{align*}

{\it Step} 4. Finally, using Lemma \ref{further}-(iii), we obtain  for any fixed $p\ge 2+2\gamma/\nu$, 
\begin{align*}
B_3^K(s) \leq& \frac{C_{p} K^{1-2/\nu}}{N-1}\sum_{j=2}^N\int_0^1 \E\Big[1+ |V^{1}_s|^{p}  + |V^{j}_s|^{p}  + |W^{1,K}_s|^{p}  \Big] d\alpha\\
=&C_{p}  K^{1-2/\nu}\int_0^1 \E\Big[1+ |V^{1}_s|^{p}  + |V^{2}_s|^{p}  + |W^{1,K}_s|^{p}  \Big] d\alpha
\end{align*}
Since $W^{1,K}_s\sim f^K_s$, we deduce from \eqref{momex} that $\E[|W^{1,K}_s|^{p} ]= 
\intrd |v|^{p}  f^K_s(dv) < \int_{\rd} e^{C_q|v|^q}f^K_s(dv)< \infty$.
The exchangeability and Proposition \ref{momentest} shows that for all $i=1,...,N$, $\E[|V^{i}_s|^{p} ] \le C_{p} \E[|V^{i}_0|^{p} ] <\infty$ by \eqref{c5}. As a result,
\[B_3^K(s) \le CK^{1-2/\nu} (\text{$C$ depending on $p_0,q,\gamma,\nu$ }).\]

\vip

{\it Step} 5. We set $h_t:=\E[ |V^{1}_t- W^{1,K}_t|^2]$. Using the above steps, we see that
for all $M\geq 1$,
\begin{align}\label{hest}
 h_t &\leq Ct e^{-M^{q/\gamma}} +Ct K^{1-2/\nu}\notag\\
&\hskip0.5cm +  (M+C)\intot \Big[ h_s 
+ \frac{1}{N-1}\sum_{j=2}^N\int_0^1 \E [|V_s^{j}-\Pi_s^{1j,K}(\bW^K_\sm,\alpha)|^2  ] d\alpha \Big]ds \notag\\
&\hskip0.5cm +\frac{C}{(N-1)}\sum_{j=2}^N \intot   \int_0^1 \E\Big[|\Pi_s^{1j,K}(\bW^K_\sm,\alpha)- W^{j,K}_s|^2\\\notag
&\hskip 2cm\times (1+|V^{1}_s|+|V^{j}_s|+ |W^{1,K}_s| + |\Pi_s^{1j,K}(\bW^K_\sm,\alpha)|+|W^{j,K}_s|  )^2 \Big] d\alpha  ds. \notag
\end{align}
We now write
\begin{align*}
&\int_0^1 \E\Big[|V_s^{j}-\Pi_s^{1j,K}(\bW^K_\sm,\alpha)|^2\Big] d\alpha\\
\leq & 2\int_0^1 \E\Big[|V_s^{j}-W^{j,K}_s|^2\Big] d\alpha 
+2  \int_0^1 \E\Big[|W^{j,K}_s-\Pi_s^{1j,K}(\bW^K_\sm,\alpha)|^2\Big] d\alpha . 
\end{align*}
Then, we deduce from exchangeability and from Lemma  \ref{ww2p}-(i) that
\begin{align}\label{topcool3}
&\frac{1}{N-1}\sum_{j=2}^N\int_0^1 \E\Big[|V_s^{j}-\Pi_s^{1j,K}(\bW^K_\sm,\alpha)|^2\Big] d\alpha \notag \\
\leq& \frac{2}{N-1}\sum_{j=2}^N\int_0^1 \E\Big[|V_s^{j}-W^{j,K}_s|^2\Big] d\alpha  + \frac{2}{N-1}\sum_{j=2}^N\int_0^1 \E\Big[|W^{j,K}_s-\Pi_s^{1j,K}(\bW^K_\sm,\alpha)|^2\Big] d\alpha \\
=&2h_s+2\e^{K}_s(f^K_s).\notag
\end{align}
Next, we bound the last term of \eqref{hest}. Applying the H\"{o}lder inequality for all $\e \in(0,1)$,
\begin{align*}
&\int_0^1 \E\Big[|\Pi_s^{1j,K}(\bW^K_\sm,\alpha)- W^{j,K}_s|^2\\
&\hskip2cm
\times \big (1+|V^{1}_s|+|V^{j}_s|+|W^{1,K}_s|+ |\Pi_s^{1j,K}(\bW^K_\sm,\alpha)|+|W^{j,K}_s|  \big)^2 \Big] d\alpha   \\
\leq& \int_0^1 \E\Big[|\Pi_s^{1j,K}(\bW^K_\sm,\alpha)- W^{j,K}_s|^{2-\varepsilon}\notag\\
&\hskip2cm\times
\big (1+|V^{1}_s|+|V^{j}_s| +|W^{1,K}_s|+ |\Pi_s^{1j,K}(\bW^K_\sm,\alpha)|+|W^{j,K}_s|  \big)^{2+\varepsilon} \Big] d\alpha    \\
\leq& \left(\int_0^1 \E\Big[|\Pi_s^{1j}(\bW_\sm,\alpha)- W^{j}_s|^{2}\Big]d\alpha\right)^{(2-\e)/2} \notag\\
&\hskip2cm \times
\left(\int_0^1 \E\Big[ \big (1+|V^{1}_s|+|V^{j}_s|+ |W^{1,K}_s|+ |\Pi_s^{1j,K}(\bW^K_\sm,\alpha)|+|W^{j,K}_s|  \big)^{(4/\e)+2} \Big] d\alpha \right)^{\varepsilon/2}\nonumber.
\end{align*}
Using again the  H\"{o}lder inequality, we have 
\begin{align}\label{topcool2}
&\frac{1}{N-1}\sum_{j=2}^N\int_0^1 \E\Big[|\Pi_s^{1j,K}(\bW^K_\sm,\alpha)- W^{j,K}_s|^2  \notag \\
&\hskip2cm
\times \big (1+|V^{1}_s|+|V^{j}_s|+|W^{1,K}_s|+ |\Pi_s^{1j,K}(\bW^K_\sm,\alpha)|+|W^{j,K}_s|  \big)^2 \Big] d\alpha  \notag \\
\le& \left(\frac{1}{N-1}\sum_{j=2}^N\int_0^1 \E\Big[|\Pi_s^{1j}(\bW_\sm,\alpha)- W^{j}_s|^{2}\Big]d\alpha\right)^{(2-\e)/2} \\
& \times
\left(\frac{1}{N-1}\sum_{j=2}^N \int_0^1 \E\Big[ \big(1+|V^{1}_s|+|V^{j}_s|+|W^{1,K}_s|+ |\Pi_s^{1j,K}(\bW^K_\sm,\alpha)|+|W^{j,K}_s|  \big)^{(4/\varepsilon)+2} \Big] d\alpha \right)^{\e/2} \notag \\
\le & C_\e\left( \e^{K}_s(f^K_s) \right)^{(2-\e)/2} \notag.
\end{align}
This last inequality concluded by Proposition \ref{momentest} and  \eqref{c5},  
\begin{align*}
\E\left[|V^{1}_s|^{(4/\varepsilon)+2}  \right] =\E\left[|V^{j}_s|^{(4/\varepsilon)+2} \right]
\le &C_{\e} \E\left[|V^{1}_0|^{(4/\varepsilon)+2} \right]
\leq C_\e.
\end{align*}
and by  Lemma \ref{ww2p}-(ii), that $W_s^{j,K}\sim f_s^K$ and \eqref{momex}, 
\begin{align*}
 \int_0^1 \E |\Pi_s^{1j,K}(\bW^K_\sm,\alpha)|^{(4/\varepsilon)+2} d\alpha=\E|W_s^{j,K}|^{(4/\varepsilon)+2}\le C_\e.
\end{align*}
Therefore, we have for all $\e\in(0,1)$, all $M\geq 1$,
\begin{align*}
h_t \leq& Ct  e^{-M^{q/\gamma}} +Ct K^{1-2/\nu}+3(M+C)\intot \big[ h_s +\e^{K}_s(f_s^K)
\big] ds \\
&+C_\e \intot ( \e^{K}_s(f_s^K) )^{1-\e/2} ds.
\end{align*}
Recall \eqref{bestrate}, and that 
$W^{1,K}_t,\dots,W^{N,K}_t$ have the common law $f_t^K$, we  thus have $\e_s^K(f_t^K)\leq 2 \intrd |v|^2f_t^K(dv)=2\intrd |v|^2f_0(dv)$. Since $M\geq 1$, $K\in [1,\infty)$,
we observe 
\begin{align*}
h_t \leq Ct e^{-M^{q/\gamma}} +CMt (K^{1-2/\nu})^{1-\e/2}
 +  CM\intot h_s ds + C_\e M \intot ( \e^{K}_s(f_s^K) )^{1-\e/2} ds.
\end{align*}
Set 
\begin{align*}
\xi_{K,t} := K^{1-2/\nu}+ \sup_{[0,t]} \e^{K}_s(f_s^K),
\end{align*}
 we thus get
\begin{align*}
h_t \leq& C_\e \left( t e^{-M^{q/\gamma}} + M t\xi_{K,t}^{1-\e/2}  +  M \intot h_s ds\right).
\end{align*}
We deduce from the  Gr\"onwall's lemma that, for any $M\geq 1$,
\begin{align*}
\sup_{[0,T]} h_t \leq& C_\e T\left(  e^{-M^{q/\gamma}} + M \xi_{K,T}^{1-\e/2}\right) e^{C_\e M T}.
\end{align*}
 Furthermore, we conclude that
\begin{align*}
\sup_{[0,T]} h_t \leq& C_{\e,T} \xi_{K,T}^{1-\e}
\end{align*}
by taking $M=1$  if $\xi_{K,T} \geq 1/e$
and $M=|\ln \xi_{K,T}|^{\gamma/q}$ otherwise. More precisely, when $\xi_{K,T} < 1/e$ and $M=|\ln \xi_{K,T}|^{\gamma/q}$, due to $\gamma/q<1$ (Noting fixed $q\in(\gamma,p_0)$ in step 2), we obtain
\begin{align*}
\sup_{[0,T]} h_t \leq& C_\e T \left( \xi_{K,T} + \xi_{K,T}^{1-\e/2} |\ln \xi_{K,T}|^{\gamma/q}    
\right) e^{C_\e |\ln \xi_{K,T}|^{\gamma/q} T}\leq  C_{\e,T} \xi_{K,T}^{1-\e}.
\end{align*}

{\it Final Step.} Recall that $\mu^{N}_t:=N^{-1}\sum\limits_{i=1}^{N}\delta_{V^{i}_t}$, 
$$
\E[\cW_2^2(\mu^{N}_t,f_t)]\leq 2 \E[\cW_2^2(\mu^N_t,\mu^{N,K}_{\bW_t^K})]
+ 4 \E[\cW_2^2(\mu^{N,K}_{\bW_t^K},f_t^K)]+ 4 \E[\cW_2^2(f_t^K,f_t)].
$$
But $\E[\cW_2^2(\mu^N_t,\mu^{N,K}_{\bW_t^K})]\leq \E[N^{-1}\sum_{i=1}^N |V^{i}_t-W_t^{i,K}|^2] =
\E[|V^{1}_t-W_t^{1,K}|^2]=h_t$ by exchangeability,
and we have already seen that $\E[\cW_2^2(\mu^{N,K}_{\bW_t^K},f_t^K)]\le \frac{N-1}{N}\e^{K}_t(f_t^K)+\frac{C}{N}$ from Lemma \ref{tem} and by \eqref{cutf}. Consequently,
for all $\e\in(0,1)$, all $t\in [0,T]$,
\begin{align*}
\E[\cW_2^2(\mu^{N}_t,f_t)]&\leq C_{\e,T}\xi_{K,T}^{1-\e} + 4\e^{K}_t(f_t^K)+\frac{C}{N}+ctK^{1-2/\nu}\\
&\leq C_{\e,T} \left(  K^{1-2/\nu}+  \sup_{[0,T]} \e^{K}_s(f_s^K) \right)^{1-\e}+\frac{C}{N},
\end{align*}
and this proves \eqref{fc31}. 
\end{proof}

We now study the hard spheres.

\begin{preuve} {\it of Proposition \ref{mrl}-(ii).}
For $\gamma=1$, we assume \eqref{cs} and \eqref{c2hs}. We consider $f_0\in \cP_2(\rd)$ satisfying \eqref{c5} for some $p_0\in(\gamma,2)$ and fix $q\in(\gamma,p_0)$ 
for the rest of the proof. We also assume that $f_0$ has a density, so that $f_t$ has a density for all $t>0$.
We fix $N\geq 1$ and $K\in [1,\infty)$ and consider 
the processes introduced in Lemma \ref{kps} and Lemma \ref{coupling1}. 
\vip
 We write similarly to the case of hard potentials and recall $h_t=\E[|V^{1}_t -W^{1,K}_t |^2]$,  then
\begin{align*}
h_t\leq \intot [B_1^K(s)+B_2^K(s)+B_3^K(s) ]ds,
\end{align*}
where  $B_i^K(s):= \frac{1}{N-1} \sum^N_{j=2}\int_0^1 \E \big[I_i^K\big(V_s^1,V_s^{j},W^{1,K}_s,\Pi_s^{1j,K}(\bW^K_\sm,\alpha)\big)\big] d\alpha$
 for $i=1,2,3.$ Using Lemma \ref{furtherhs} and following exactly the proof of hard potentials, we deduce that for all $M>1$ and any fixed $q\in(1, p_0)$,
\begin{align*}
    B^K_1(s)+B^K_2(s)&\le \frac{2M}{N-1} \sum_{j=2}^N \int_0^1 \E\Big[\abs{V_s^1-W_s^{1,K}}^2+\abs{V_s^j-\Pi_s^{1j,K}(\boldsymbol{W}_{s-},\alpha)}^2\Big] d\alpha\\
     &+  \frac{C}{N-1} \sum_{j=2}^N \int_0^1 \E\Big[ \abs{\Pi_s^{1j,K}(\boldsymbol{W}_{s-},\alpha)-W_s^{j,K}}^2\\
    &\hskip5cm \times\Big(1+\abs{W_s^{1,K}}+\abs{\Pi_s^{1j,K}(\boldsymbol{W}_{s-})}+\abs{W_s^{j,K}}\Big)^{2}\Big]d\alpha\\
     &+\frac{C K e^{-M^q}}{N-1} \sum_{j=2}^N \int_0^1 \E\Big[ \Big(1+\abs{V_s^j}+\abs{V_s^1}\Big) \\
  &   \hskip3.5cm \times \exp \left\{C\Big(\abs{W_s^{1,K}}^q+\abs{\Pi_s^{1j,K}(\boldsymbol{W}_{s-},\alpha)}^q+\abs{W_s^{j,K}}^q\Big)\right\}\Big]d\alpha.
\end{align*}
and for any $p>0$,
\begin{align*}
    B^K_3(s)\le \frac{C_{p} K^{-p}}{N-1} \sum_{j=2}^N \int_0^1 \E\Big[1+\abs{V_s^1}^{p+3}+\abs{V_s^j}^{p+3}+\abs{W^{1,K}_s}^{p+3}\Big]\le \frac{C_{p}}{K^{p}},
\end{align*}
due to Proposition  \ref{momentest}, \eqref{c5} and \eqref{momex}.
We also note that 
\begin{align*}
   & \int_0^1 \E\Big[ \Big(1+\abs{V_s^j}+\abs{V_s^1}\Big) \exp \left\{C\Big(\abs{W_s^{1,K}}^q+\abs{\Pi_s^{1j,K}(\boldsymbol{W}_{s-},\alpha)}^q+\abs{W_s^{j,K}}^q\Big)\right\}\Big]d\alpha\\
    &\le   \int_0^1 \E\Big[ \Big(1+\abs{V_s^j}+\abs{V_s^1}\Big)^2 +\exp  \left\{2 C\Big(\abs{W_s^{1,K}}^q+\abs{\Pi_s^{1j,K}(\boldsymbol{W}_{s-},\alpha)}^q+\abs{W_s^{j,K}}^q\Big) \right\}\Big]d\alpha \le C,
\end{align*}
owing to the fact that $\E|V_s^j|^2=\E|V_0^1|^2=\int_{\rd}|v|^2f_0(dv)<\infty$ for $j=1,2,\cdots,N$ and that  a similar computation  to \eqref{nones}. Proceeding as in \eqref{topcool2}, it holds that  for any $\e\in(0,1),$ 
\begin{align*}
  &\frac{1}{N-1}\sum_{j=2}^N  \int_0^1 \E\Big[ \abs{\Pi_s^{1j,K}(\boldsymbol{W}_{s-},\alpha)-W_s^{j,K}}^2\Big(1+\abs{W_s^{1,K}}+\abs{\Pi_s^{1j,K}(\boldsymbol{W}_{s-})}+\abs{W_s^{j,K}}\Big)^{2}\Big]d\alpha\\
  &\le C_\e\big(\e_s^K(f_s^K)\big)^{1-\frac{\e}{2}}.
\end{align*}
Using \eqref{topcool3}, we thus have
\begin{align*}
     B^K_1(s)+B^K_2(s)+B^K_3(s)\le& 6Mh_s+ 4M\e_s^K(f_s^K)+C_\e\big(\e_s^K(f_s^K)\big)^{1-\frac{\e}{2}}+C K e^{-M^q}+\frac{C_{p}}{K^{p}}.
\end{align*}
Hence, for $0\le t\le T$, any $M>1$, any $\e\in(0,1)$,
\begin{align*}
    h_t \le& 6M\int_0^t h_s ds +CtK e^{-M^q}+C_{p} tK^{-p}+4M \int_0^t \e_s^K(f_s^K) ds+  C_\e\int_0^t \big(\e_s^K(f_s^K)\big)^{1-\frac{\e}{2}} ds\\
    \le&  6M\int_0^t h_s ds +CtK e^{-M^q}+C_{p} tK^{-p}+  C_\e M\int_0^t \big(\e_s^K(f_s^K)\big)^{1-\frac{\e}{2}}ds\\
    \le& C_\e\Big( M\int_0^t h_s ds+K e^{-M^q}T +TK^{-p}+  M\int_0^T \big(\e_s^K(f_s^K)\big)^{1-\frac{\e}{2}}ds\Big).
\end{align*}

 By the Gr\"onwall's inequality, we deduce that 
\begin{align*}
   \sup_{t\in [0,T]} h_t \le C_\e T\Big(K^{-p}+K e^{-M^q}+M\sup_{t\in[0,T]}\big(\e_t^K(f_t^K)\big)^{1-\frac{\e}{2}}\Big) e^{C_\e MT}.
\end{align*}
Choosing $M=\e(C_\e T)^{-1}\log(K)+1$ and recalling that $q>1,$ we deduce
\begin{align*}
     \sup_{t\in [0,T]} h_t \le C_{\e ,T}\Big( K^{\e-p}+K^{1+\e} e^{-\e^q(C_\e T)^{-q} \log^q(K)}+K^\e\big(1+\log(K)\big)\sup_{t\in[0,T]}\big(\e_t^K(f_t^K)\big)^{1-\frac{\e}{2}}\Big).
\end{align*}
We observe that there exists some positive constant $C_{\e,T},$ such that $K^{1+\e} e^{-\e^q(C_\e T)^{-q} \log^q(K)}\le C_{\e,T}  K^{\e-p}$. Indeed, it suffices to show that for any $m>0,$ there exists some $C_m>0,$ such that $ e^{-m \log^q(K)}= K^{-m\log^{q-1}(K)}\le C_m  K^{-p-1}$. In fact, if $m\log^{q-1}(K)\ge p+1,$ we just choose $C_m=1.$ Otherwise,  it implies that  $\log(K)\le \Big(\frac{p+1}{m}\Big)^{1/(q-1)}$, i.e.
\begin{align*}
    K \le \exp\Big( (p+1)^{1/(q-1)}/m^{1/(q-1)}\Big):=c_m.
\end{align*}
 Set $C_m=c_m^{p+1}$,  we thus have $K^{p+1}K^{-m\log^{q-1}(K)}\le K^{p+1}\le c_m^{p+1}=C_m. $
Moreover, we notice that $1+\log(K)\le \e^{-1} K^\e$ for any $K\ge 1$. Consequently, letting  $p=\e+ 2/\nu-1,$ we  have 
\begin{align*}
     \sup_{t\in [0,T]} h_t \le&  C_{\e ,T}\Big( K^{1-2/\nu}+K^{2\e}\sup_{t\in[0,T]}\big(\e_t^K(f_t^K)\big)^{1-\frac{\e}{2}}\Big).
\end{align*}
Operating as the final step in hard potentials,  using that  
$
\E[\cW_2^2(\mu^{N}_t,f_t)]\leq 2 h_t
+ 4 \E[\cW_2^2(\mu^{N,K}_{\bW_t^K},f_t^K)]+ 4 \E[\cW_2^2(f_t^K,f_t)],$ we finally conclude that   for any $K\ge 1,$ $\e\in(0,1),$
\begin{align*}
\sup_{[0,T]}\E[\cW_2^2(\mu^N_t,f_t)] \leq C_{\e ,T}\left( K^{1-2/\nu}+K^{2\e}\Big(\sup_{t\in[0,T]}\e_t^K(f_t^K)\Big)^{1-\e/2}\right)+\frac{C}{N},
\end{align*}
which ends the proof.
\end{preuve}

\subsection{A second coupling}
Fournier  and Guillin \cite[Theorem 1]{MR3383341} proved that for a probability measure $f$ on $\mathbb{R}^d$ and a sequence of $f$-distributed and $\mathbb{R}^d$-valued i.i.d random variables $(X_1,\cdots,X_N)$, 
\[\E\cW_2^2\Big(f,\frac{1}{N}\sum_{i=1}^N\delta_{X_i}\Big)\le\frac{C_p(\int |v|^pf(dv))^{2/p}}{N^{1/2}},\]
for any $p>4$ with $C_p>0$. Compared to this,  in order to bound $\e^{K}_t(f_t^K)$, 
we thus have to prove the following result making use of the second coupling technique mentioned  previously because the family of processes  $(W_t^{i,K})_{t\ge0}$ are not independent.

\begin{prop}\label{thep}
For $\gamma\in(0,1]$ and taking $K\ge1$, suppose \eqref{cs}, \eqref{c4}, \eqref{c2hs} or \eqref{c2}. Let $f_0\in\cP_2(\rd)$ not be a Dirac mass and satisfy \eqref{c5}. Fix $N\geq 2$ and recall that $\e^{K}_t(f_t^K)$ defined in (\ref{bestrate}).
Consider the unique weak solution $(f_t^K)_{t\geq 0}$ to the cutoff \eqref{be} defined in Remark  \ref{cutws}. For any $0<\varepsilon<1,$  we have
\beq\label{spe}
\e^{K}_t(f_t^K)\le C_{\e,T}\Big(\frac{1}{N^\frac{1}{3}}\Big)^{1-\varepsilon}.
\eeq
\end{prop}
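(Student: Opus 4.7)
The plan is to deploy the second-coupling technique of \cite{MR3476628,MR3769742}: introduce, on the same probability space as $(W^{i,K}_t)$, a family $(\tW^{i,K}_t)_{i=1,\dots,N}$ of i.i.d.\ cutoff Boltzmann processes with law $f_t^K$, and decompose, by the triangle inequality and exchangeability,
\begin{equation*}
\e^K_t(f_t^K) \le 2\E\Big[\cW_2^2\Big(f_t^K, \frac{1}{N-1}\sum_{i=2}^{N}\delta_{\tW^{i,K}_t}\Big)\Big] + 2\E\big[|W^{1,K}_t - \tW^{1,K}_t|^2\big].
\end{equation*}
The first term is $\le C/\sqrt{N}$ by the Fournier--Guillin estimate recalled just before the proposition, thanks to the uniform polynomial moments of $f_t^K$ coming from \eqref{momex}.

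To build the $\tW^{i,K}$ I solve, for each $i$, an SDE of the form \eqref{cbp}, started from $V^i_0$ and driven by a Poisson measure of intensity $ds\,f_s^K(dv)\,dz\,d\varphi$ that is extracted from $(M_{ij})_{j\ne i}$ via the image-measure/measurable-selection construction used in the proof of Lemma \ref{coupling1} (relying on Lemma \ref{ww2p}(ii) together with the $2\pi$-periodicity of $c_K$ in $\varphi$). The key feature of the construction is that, at each jump time of $M_{1j}$, the collision partner used by $\tW^{1,K}$ can be arranged either to coincide with $\Pi_s^{1j,K}(\bW^K_\sm,\alpha)$ (the partner of $W^{1,K}$) or to differ from it only through an intermediate insertion. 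Setting $h_t := \E[|W^{1,K}_t - \tW^{1,K}_t|^2]$ and applying Ito's formula, the ``same-$v_*$'' piece of each increment is controlled by Lemma \ref{IK247}, which yields $CM h_s + C_q e^{-M^{q/\gamma}}$ multiplied by an exponential factor whose expectation is uniformly bounded thanks to \eqref{momex} and Proposition \ref{momentest}; the ``partner-discrepancy'' piece is handled by Lemma \ref{fundest}(ii) together with the optimal-coupling identity of Lemma \ref{ww2p}(i), giving a contribution bounded by $\e^K_s(f_s^K)$ itself.

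Summing over $j$ and integrating in time, this produces the self-referential system
\begin{equation*}
h_t \le CM\int_0^t h_s\,ds + C\int_0^t \e^K_s(f_s^K)\,ds + Ct\,e^{-M^{q/\gamma}}, \qquad \e^K_t(f_t^K) \le 2h_t + C/\sqrt{N},
\end{equation*}
which I close by Gronwall's lemma together with an optimal choice of $M$ (a small power of $\log N$), yielding the announced rate $N^{-(1-\e)/3}$. The main obstacle is the precise engineering of the second coupling: one must simultaneously guarantee that the $\tW^{i,K}$ are genuinely i.i.d., that they remain coupled to the $W^{i,K}$ tightly enough for Lemma \ref{IK247} to be applicable (otherwise the hard-potential factor $|v-v_*|^\gamma$ cannot be absorbed), and that the exponential prefactors arising from that lemma are controlled uniformly in $K$ by the moment propagation of Proposition \ref{momentest}. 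The fact that the final rate is $N^{-1/3}$ rather than the i.i.d.\ Fournier--Guillin rate $N^{-1/2}$ reflects the feedback of $\e^K_s$ back into the estimate for $h_t$, which is the essential price of the binary (rather than Nanbu-type) collisions.
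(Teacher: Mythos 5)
Your overall framework---second coupling, auxiliary i.i.d.\ cutoff Boltzmann processes, Fournier--Guillin, Gr\"onwall---is the right one, but there is a genuine gap that you flag yourself as ``the main obstacle'' and then do not resolve. You propose to build \emph{all} $N$ processes $\tW^{1,K},\dots,\tW^{N,K}$ simultaneously i.i.d.\ while remaining tightly coupled to $W^{1,K},\dots,W^{N,K}$. This is not achievable: tight coupling of $\tW^{i,K}$ to $W^{i,K}$ requires driving them by essentially the same Poisson noise, and since $W^{i,K}$ and $W^{j,K}$ share $M_{ij}$, any two tightly coupled $\tW^{i,K},\tW^{j,K}$ would share it too and hence could not be independent. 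Breaking all the pairwise sharing among the $\tW$'s forces each $\tW^{i,K}$ to give up an $O(1)$ fraction of the $N-1$ measures $(M_{ij})_{j\ne i}$ driving $W^{i,K}$, so $\E[|W^{i,K}_t-\tW^{i,K}_t|^2]$ stays of order $1$. The paper's resolution is to introduce an auxiliary integer $k\le N-1$ and build only $k$ i.i.d.\ processes, by replacing the $M_{ij}$ with $1\le i,j\le k$ by fresh independent copies and keeping $M_{ij}$ with $j>k$; this yields the decoupling estimate $\E[|W^{i,K}_t-\tW^{i,K}_t|^2]\le C_{\e,T}(k/N)^{1-\e}$ of Lemma~\ref{thele}, where $k/N$ is precisely the proportion of swapped measures (each controlled crudely via Lemma~\ref{fundest}(ii), terms $J^1_t,J^2_t$), while the shared $j>k$ jumps are handled by Lemma~\ref{IK247} and produce only the $CM\,h_s$ term. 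One then needs the block-averaging Lemma~\ref{line} to pass from the $(N-1)$-particle empirical measure to the $k$-particle one at cost $O(k/N)$, so that Fournier--Guillin is applied to $k$, not $N$, i.i.d.\ samples, contributing $k^{-1/2}$.

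This also means your explanation of the exponent is off. In the paper's Gr\"onwall for $h_t$ there is no $\e^K_s$ on the right-hand side: the only $\Pi$-related terms in Lemma~\ref{thele} have coinciding collision partners (so Lemma~\ref{IK247} applies directly). The rate $N^{-1/3}$ comes from optimizing $(k/N)^{1-\e}+k^{-1/2}+k/N$ at $k\sim N^{2/3}$, not from ``feedback of $\e^K_s$ back into $h_t$''; indeed, the mild linear feedback system you wrote down would close by Gr\"onwall to something of order $N^{-1/2(1-\e)}$, not $N^{-1/3}$, so even on your own terms the claimed rate does not follow from your inequalities.
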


Before  we give the proof, we first check that the family of processes $(W_t^{i,K})_{t\ge0}$ without independence are  not so far from  independent in some sense.
\begin{lem}\label{thele}
For $k\in{\{1,...,N\}}$, there is an i.i.d. family of processes $(\tW_t^{i,K})_{i=1,...,k, t\ge0}$
such that $\tW_t^{i,K}\sim f_t^K$ for all $t\ge0$, all $i=1,...,k$ and such that for all $T>0$, all $\varepsilon\in(0,1)$ and all $i=1,...,k$, 
\beq\label{decoup}
\E[|W_t^{i,K}-\tW_t^{i,K}|^2]\le C_{\e,T} \big(\frac{k}{N}\big)^{1-\varepsilon}.
\eeq
\end{lem}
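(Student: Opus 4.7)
The plan is to construct the auxiliary family $(\tW^{i,K})_{i\le k}$ as i.i.d.\ cutoff Boltzmann processes on the same probability space as the Kac system of Lemma \ref{kps}, coupling each $\tW^{i,K}$ to $W^{i,K}$ via Poisson randomness shared on ``bath'' collisions (partners $j>k$) and replaced by fresh independent randomness on ``intra-group'' collisions (partners $j\le k$), in the spirit of the second coupling of \cite{MR3769742,MR3621429}. Concretely, for each $i\le k$ set $\bar M^i_{ij}:=M_{ij}$ when $j>k$, take $\bar M^i_{ij}$ to be an independent copy of $M_{ij}$ when $j\le k$, and introduce auxiliary randomness (independent across $i$) generating $f_s^K$-distributed virtual partners $\tilde\Pi^{ij,K}_s(\alpha)$, coupled with $\Pi^{ij,K}_s(\bW^K_{\sm},\alpha)$ in the Wasserstein-optimal way conditionally on the bath $\sigma$-algebra. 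Then set
\begin{align*}
\tW^{i,K}_t:=V^i_0+\sum_{j=1,j\ne i}^N\int_0^t\!\int_0^1\!\int_0^\infty\!\int_0^{2\pi}c_K\bigl(\tW^{i,K}_{\sm},\tilde\Pi^{ij,K}_s(\alpha),z,\varphi+\tilde\psi_{i,j,s}\bigr)\bar M^i_{ij}(ds,d\alpha,dz,d\varphi),
\end{align*}
with a Tanaka-type phase shift $\tilde\psi_{i,j,s}$ aligning $\tW^{i,K}$'s jump with $W^{i,K}$'s on the shared atoms ($j>k$). A Laplace-functional argument parallel to the last display of the proof of Lemma \ref{coupling1} shows each $\tW^{i,K}_t\sim f_t^K$; independence of the family follows because for $i\ne i'\le k$ the driving data lie in disjoint independent sub-$\sigma$-algebras (the shared $\{M_{ij}\}_{j>k}$ and $\{M_{i'j}\}_{j>k}$ correspond to pairwise distinct unordered pairs, the $\bar M^i_{ij}$ with $j\le k$ are independent copies, and the virtual-partner randomness is independent across $i$).

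For the quantitative estimate, use exchangeability across $i\le k$ to reduce to $h_t:=\E[|W^{1,K}_t-\tW^{1,K}_t|^2]$, apply It\^o's formula, and split the jumps into: (A) jumps from the shared atoms $\bar M^1_{1j}=M_{1j}$ with $j>k$, at which both processes move simultaneously and Lemma \ref{fundest}-(i) produces an integrand bounded by $I_1^K+I_2^K+I_3^K$; and (B) jumps from independent atoms with $j\le k$, at which only one of the two processes moves and Lemma \ref{fundest}-(ii) applies. For (A), Lemma \ref{further} (resp.\ \ref{furtherhs} for hard spheres) and Lemma \ref{IK247}, combined with the $\tB_2^K=0$-style anti-symmetry trick from step 3 of the proof of Proposition \ref{mrl}-(i) and the triangle bound $|\tilde\Pi^{1j,K}-\Pi^{1j,K}|^2\le C(h_s+\e^K_s(f_s^K)+|\tilde\Pi^{1j,K}-\tW^{j,K}|^2)$ (the last term being made small by the bath-conditional optimal coupling), reduce the contribution to $CM\int_0^t h_s\,ds+Ct(e^{-M^{q/\gamma}}+K^{1-2/\nu})$. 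For (B), the total intra-group jump rate is $(k-1)/(N-1)\le k/N$, and Lemma \ref{fundest}-(ii) together with Proposition \ref{momentest} and \eqref{momex} bounds the contribution by $C(k/N)$ uniformly on $[0,T]$.

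Combining yields $h_t\le C_T(k/N)+C_q t e^{-M^{q/\gamma}}+CM\int_0^t h_s\,ds$, and the Gr\"onwall lemma followed by the $M$-optimization used at the end of the proof of Proposition \ref{mrl}-(i) (taking $M=1$ when $k/N\ge 1/e$ and $M=|\ln(k/N)|^{\gamma/q}$ otherwise) delivers the announced rate $\sup_{[0,T]}h_t\le C_{\e,T}(k/N)^{1-\e}$. The main technical obstacle is the construction in step one: arranging the virtual partners $\tilde\Pi^{ij,K}_s(\alpha)$ so that the three competing requirements are simultaneously met---each $\tW^{i,K}_t$ marginally distributed as $f_t^K$, the family $(\tW^{i,K})_{i\le k}$ jointly i.i.d., and on the bath-shared part the discrepancy $|\tilde\Pi^{ij,K}-\Pi^{ij,K}|^2$ tight enough to be absorbed into $h_s+\e^K_s(f_s^K)$ so that Gr\"onwall closes. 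The bath-conditional Wasserstein coupling and the anti-symmetry of $I_2^K$ are precisely what make this reconciliation possible; once carried out, the remainder of the proof is a mechanical parallel to the inner steps of Proposition \ref{mrl}.
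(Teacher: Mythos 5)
Your overall architecture is the right one — split the driving Poisson atoms into a ``bath'' part ($j>k$, shared with the Kac system) and an ``intra-group'' part ($j\le k$, replaced by fresh independent copies), handle the intra-group jumps via Lemma~\ref{fundest}-(ii) at total rate $\approx k/N$, and close with Gr\"onwall plus the $M$-optimization from Proposition~\ref{mrl}-(i). That much matches the paper.

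The gap is in your construction of the virtual partner. You introduce a \emph{new} family $\tilde\Pi^{ij,K}_s(\alpha)$, coupled to $\Pi^{ij,K}_s(\bW^K_{\sm},\alpha)$ in some conditionally-Wasserstein-optimal way, and then have to control the residual $|\tilde\Pi^{ij,K}-\Pi^{ij,K}|^2$ via a triangle bound that brings in $\e^K_s(f_s^K)$ and an unquantified term $|\tilde\Pi^{ij,K}-\tW^{j,K}|^2$. This does not close: $\e^K_s(f_s^K)$ is precisely the quantity that Lemma~\ref{thele} is an ingredient for estimating (in Proposition~\ref{thep}), so letting it appear inside the Gr\"onwall recursion for $h_t$ is circular — the trivial a priori bound $\e^K_s(f_s^K)\le 4m_2(f_0)$ is $O(1)$, not $O((k/N)^{1-\e})$, so your claimed reduction to $CM\int_0^t h_s\,ds+Ct(e^{-M^{q/\gamma}}+K^{1-2/\nu})$ is not reached. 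The assertion that $|\tilde\Pi^{ij,K}-\tW^{j,K}|^2$ is ``made small by the bath-conditional optimal coupling'' is also not justified, since $\tilde\Pi^{ij,K}$ and $\tW^{j,K}$ are unrelated objects. The paper sidesteps all of this by \emph{not} changing the virtual partner at all: $\tW^{i,K}$ is driven by the same $\Pi_s^{ij,K}(\bW^K_{\sm},\alpha)$ as $W^{i,K}$, only with the $N_{ij}$ Poisson measures and a Tanaka phase shift. Then on the bath atoms $j>k$ both processes see \emph{identical} collision partners, i.e.\ one is in the $v_*=\tv_*$ case, and Lemma~\ref{IK247} applies directly with no residual term. (In particular no $\tB_2^K=0$ anti-symmetry trick is needed here; the anti-symmetry is already baked into Lemma~\ref{IK247}.) Moreover, since both processes use the cutoff $c_K$, no atom with $z>K$ moves either process, so $I_3^K$ never enters on the bath part; only $I_1^K+I_2^K$ remain, and the bound $B(s)\le CM\E[|W^{1,K}_s-\tW^{1,K}_s|^2]+Ce^{-M^{q/\gamma}}$ follows cleanly, giving $h_t\le Ctk/N+Cte^{-M^{q/\gamma}}+CM\int_0^t h_s\,ds$ and then the rate $(k/N)^{1-\e}$ by Gr\"onwall. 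Replacing your $\tilde\Pi^{ij,K}$ by the original $\Pi^{ij,K}_s(\bW^K_{\sm},\alpha)$ removes the gap and recovers the paper's proof.
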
 
\begin{preuve}. We will write our proof into two steps. Let  $k\in\{1,...,N\}$ be fixed.
\vip
{\it Step} 1.
Recall that the Poisson random measures $(M_{ij}(ds,d\alpha,dz,d\varphi))_{1\le i<j\le N}$ introduced  in Lemma \ref{kps} are independent, and satisfy $M_{ij}(ds,d\alpha,dz,d\varphi)=M_{ji}(ds,d\alpha,dz,d\varphi)$.
We introduce a new family of independent Poisson measures $(\tM_{ij}(ds,d\alpha,dz,d\varphi))_{1\le i\ne j\le N}$  on $[0,\infty)\times [0,1]\times[0,\infty)\times [0,2\pi)$ with intensity measure $\frac{1}{N-1}ds d\alpha dz d\varphi$ (independent of everything else), and $\tM_{ii}(ds,d\alpha,dz,d\varphi)=0$ for $i=1,...,N$.
We now define for $i=1,...,k$,
\[N_{ij}(dt,d\alpha,dz,d\varphi)=\tM_{ij}(dt,d\alpha,dz,d\varphi)\indiq_{1\le j\le k}+M_{ij}(dt,d\alpha,dz,d\varphi)\indiq_{j>k},\]
which are independent Poisson measures, with intensity $\frac{1}{N-1}ds d\alpha dz d\varphi$. It follows from Lemma \ref{coupling1} that for $i=1,...,k$, there is a process  $\tW_t^{i,K}\sim f_t^K$ for all $t\ge0$, starting from $V^i_0$ solving the  stochastic equation 
\begin{align*}
    \tW^{i,K}_t=V^i_0 + \sum_{j=1}^N\intot\int_0^1\int_0^\infty\int_0^{2\pi} 
c_K(\tW^{i,K}_\sm,\Pi_s^{ij,K}(\bW^K_\sm,\alpha),z,\varphi+\tilde{\varphi}_{i,j,s})N_{ij}(ds,d\alpha,dz,d\varphi).
\end{align*}
where $\tilde{\varphi}_{i,j,s}:=\varphi_{i,j,s}+\varphi_{i,j,s}^1$ with $\varphi_{i,j,s}:=\varphi_0(V^{i}_\sm-V^{j}_\sm,W^{i,K}_\sm-\Pi_s^{ij,K}(\bW^K_\sm,\alpha))$, and $\varphi_{i,j,s}^1:= \varphi_0(W^{i,K}_\sm-\Pi_s^{ij,K}(\bW^K_\sm,\alpha),\tW^{i,K}_\sm-\Pi_s^{ij,K}(\bW^K_\sm,\alpha)).$ And $(\tW^{i,K})_{i=1,...,k}$ are independent since the Poisson measures $N_{ij}$ are independent and independent of everything else.

\vip

{\it Step} 2. 
In this step, we show that for all $T>0$, all $\varepsilon\in(0,1)$ and all $i=1,...,k$, 
\beq\label{decoup}
\E[|W_t^{i,K}-\tW_t^{i,K}|^2]\le C_{\e,T} \big(\frac{k}{N}\big)^{1-\varepsilon}.
\eeq
By exchangeability, it suffices to study $\E[|W_t^{1,K}-\tW_t^{1,K}|^2]$. Recalling that $W_t^{i,K}$ defined in \eqref{tbp2} and $\tW_t^{i,K}$ introduced above, we have 
\begin{align*}
&W_t^{1,K}-\tW_t^{1,K}\\
&=\sum_{j=1}^k\intot\int_0^1\int_0^\infty\int_0^{2\pi} 
c^{1j}_K M_{1j}(ds,d\alpha,dz,d\varphi)-\sum_{j=1}^k\intot\int_0^1\int_0^\infty\int_0^{2\pi} 
\tilde{c}^{1j}_K \tM_{1j}(ds,d\alpha,dz,d\varphi)\\
&\hskip2cm+\sum_{j=k+1}^N\intot\int_0^1\int_0^\infty\int_0^{2\pi} 
(c^{1j}_K-\tilde{c}^{1j}_K) M_{1j}(ds,d\alpha,dz,d\varphi).
\end{align*}
where 
\begin{align*}
    c^{1j}_K:=c_K(W^{1,K}_\sm,\Pi_s^{1j,K}(\bW^K_\sm,\alpha),z,\varphi+\varphi_{1,j,s}),\  \tilde{c}^{1j}_K:=c(\tW^{1,K}_\sm,\Pi_s^{1j,K}(\bW^K_\sm,\alpha),z,\varphi+\tilde{\varphi}_{1,j,s}).
\end{align*}
By It\^o's formula, we have
\begin{align*}
    \E[|W_t^{1,K}-\tW_t^{1,K}|^2]=J^{1}_t+J^{2}_t+J^{3}_t,
\end{align*}
where
\begin{align*}
   & J^{1}_t=\frac{1}{N-1}\sum_{j=1}^k\intot\int_0^1\int_0^\infty\int_0^{2\pi} \E\Big[|W_s^{1,K}-\tW_s^{1,K}+c^{1j}_K|^2-|W_s^{1,K}-\tW_s^{1,K}|^2\Big]d\varphi dzd\alpha ds,\\
     &J^{2}_t=\frac{1}{N-1}\sum_{j=1}^k\intot\int_0^1\int_0^\infty\int_0^{2\pi} \E\Big[|W_s^{1,K}-\tW_s^{1,K}-\tilde{c}^{1j}_K|^2-|W_s^{1,K}-\tW_s^{1,K}|^2\Big]d\varphi dzd\alpha ds,\\
     &J^{3}_t=\frac{1}{N-1}\sum_{j=k+1}^N\intot\int_0^1\int_0^\infty\int_0^{2\pi}\E\Big[ |W_s^{1,K}-\tW_s^{1,K}+c^{1j}_K-\tilde{c}^{1j}_K|^2-|W_s^{1,K}-\tW_s^{1,K}|^2\Big]d\varphi dzd\alpha ds.
\end{align*}
It's not hard to bound $J^{1}_t$ and $J^{2}_t$. Applying  Lemma \ref{fundest}-(ii) with $v=W_s^{1,K}, v_*=\Pi_s^{1j,K}(\bW^K_\sm,\alpha), \tv=\tW_s^{1,K}$, we have from Lemma \ref{ww2p}-(ii) and (\ref{momex}) that 
\begin{align*}
J^{1}_t&\le \frac{C}{N-1}\sum_{j=1}^k\intot\int_0^1\E\Big[1+|\tW_s^{1,K}|^{2+2\gamma}+|W_s^{1,K}|^{2+2\gamma}+|\Pi_s^{1j,K}(\bW^K_\sm,\alpha)|^{2+2\gamma}\Big]d\alpha ds\\
&\le \frac{Ck}{N-1}\intot\Big[\int_\rd v^{2+2\gamma}f_s^K(dv)+1\Big]ds\\
&\le \frac{Ctk}{N-1}.
\end{align*}
Similarly, $J^{2}_t\le \frac{Ctk}{N-1}.$ We now bound  $ J^{3}_t$.
By \eqref{cesti} with $v_*=\tv_*=\Pi_s^{1j,K}(\bW^K_\sm,\alpha)$, we have
\begin{align*}
J^{3}_t\le \intot B(s)ds,
\end{align*}
where 
\begin{align*}
B(s)=\frac{1}{N-1}\int_0^1 \E\Big[\sum^N_{j=k+1}(I_1^K+I_2^K)(W^{1,K}_s, \Pi_s^{1j,K}(\bW^K_\sm,\alpha),\tW_s^{1,K},\Pi_s^{1j,K}(\bW^K_\sm,\alpha)) \Big] d\alpha.
\end{align*}
We deduce from Lemma \ref{IK247} that for  fixed $q\in(\gamma,p_0)$,
\begin{align*}
B(s)\le& \frac{CM}{N-1}\sum^N_{j=k+1}\int_0^1 \E\Big[|W^{1,K}_s-\tW_s^{1,K}|^2\Big] d\alpha\\
&+\frac{C_qe^{-M^{q/\gamma}}}{N-1}\sum^N_{j=k+1}\int_0^1 \E\Big[\exp\big(C_q(|W^{1,K}_s|^q+|\Pi_s^{1j,K}(\bW^K_\sm,\alpha)|^q+|\tW_s^{1,K}|^q)\big)\Big] d\alpha\\
\le& CM\E\Big[|W^{1,K}_s-\tW_s^{1,K}|^2\Big]+Ce^{-M^\frac{q}{\gamma}}.
\end{align*}
The last  step derives from a similar computation  to \eqref{nones}, Lemma \ref{ww2p}-(ii) and (\ref{momex}).
We thus have (noting   $\frac{N}{N-1}<2$)
\begin{align*}
    \E[|W_t^{1,K}-\tW_t^{1,K}|^2]\le \frac{Ctk}{N}+CM\intot \E[|W_s^{1,K}-\tW_s^{1,K}|^2]ds+Cte^{-M^\frac{q}{\gamma}}.
\end{align*}
By the Gr\"{o}nwall inequality, we observe that 
$$
\E[|W_t^{1,K}-\tW_t^{1,K}|^2]\le Cte^{CMt }\Big(\frac{k}{N}+e^{-M^\frac{q}{\gamma}}\Big)\le CTe^{CMT}\Big(\frac{k}{N}+e^{-M^\frac{q}{\gamma}}\Big).
$$
If $1\ge\frac{k}{N}\ge e^{-1}$,  we  then choose $M=1$. Otherwise, by  choosing 
 $M=(\ln\frac{N}{k})^\frac{\gamma}{q},$ we then conclude $\E[|W_t^{1}-\tW_t^{1}|^2]\le \frac{CTk}{N}e^{CT(\ln\frac{N}{k})^\frac{\gamma}{q}}$. Since $\gamma/q<1$, then we have for all $0<\varepsilon<1$, 
$(\ln\frac{N}{k})^\frac{\gamma}{q}\le C_\e+\frac{\varepsilon}{CT}\ln\frac{N}{k}$, which gives (\ref{decoup}).
\end{preuve}

In order to prove Proposition  \ref{thep},  we also need the following lemma.
\begin{lem}\label{line}
Let $m\ge2$, $\mu\in\cP_2(\rd)$. Let $W_1,...,W_N$ be a family of exchangeable $\rd$-valued random variables. Then for any $k\le m$, we set $\mu_k=k^{-1}\sum_{i=1}^k \delta_{W_i}$,
\[\E\left[\cW_2^2(\mu_m, \mu)\right]\le \E\left[\cW_2^2(\mu_k, \mu)\right]+\frac{l}{m}\Big(2m_2(\mu)+2\E[|W_1|^2]\Big),\]
where $r, l$  are the unique non-negative integers satisfying $m=kr+l$ and $l\le k-1$.
\end{lem}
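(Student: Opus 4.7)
The plan is to use the Euclidean division $m = kr + l$ to partition the $m$ exchangeable particles into $r$ blocks of size $k$ plus a leftover block of size $l$, and then invoke the convexity inequality \eqref{wline} (established in the proof of Lemma \ref{tem}) iteratively. Concretely, I would set, for $j = 1,\dots,r$, the block empiricals $\nu_j := k^{-1}\sum_{i=(j-1)k+1}^{jk}\delta_{W_i}$, and, when $l \ge 1$, the leftover empirical $\eta := l^{-1}\sum_{i=kr+1}^m \delta_{W_i}$. The starting point is the convex decomposition
$$\mu_m = \frac{kr}{m}\Bigl(\frac{1}{r}\sum_{j=1}^{r}\nu_j\Bigr) + \frac{l}{m}\,\eta, \qquad \mu = \frac{kr}{m}\,\mu + \frac{l}{m}\,\mu,$$
to which \eqref{wline} applies both at the outer two–piece split and at the inner $r$–fold average, producing
$$\cW_2^2(\mu_m,\mu) \le \frac{kr}{m}\cdot\frac{1}{r}\sum_{j=1}^{r}\cW_2^2(\nu_j,\mu) + \frac{l}{m}\,\cW_2^2(\eta,\mu).$$

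Taking expectations, the exchangeability of $(W_1,\dots,W_N)$ implies that each $\nu_j$ is identically distributed with $\mu_k$, so $\E[\cW_2^2(\nu_j,\mu)] = \E[\cW_2^2(\mu_k,\mu)]$ for every $j$, and the first term is bounded by $(kr/m)\,\E[\cW_2^2(\mu_k,\mu)] \le \E[\cW_2^2(\mu_k,\mu)]$. For the leftover term, I would use the crude product–coupling estimate $\cW_2^2(\eta,\mu) \le 2m_2(\eta) + 2m_2(\mu)$, which follows from $|v-v'|^2 \le 2|v|^2 + 2|v'|^2$; taking expectation and using exchangeability a second time gives $\E[m_2(\eta)] = \E[|W_1|^2]$. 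Summing these two contributions yields exactly the claimed inequality, and the case $l = 0$ is degenerate since the leftover block is empty.

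The argument is essentially bookkeeping, so there is no real obstacle — the only delicate point is making sure that although the block empiricals $\nu_1,\dots,\nu_r$ are \emph{not} mutually independent, the exchangeability hypothesis is nonetheless enough to identify each of their marginal laws with that of $\mu_k$, which is all that is needed once the outer convexity step has already separated the blocks additively. Everything else is a verification that $kr/m + l/m = 1$ and that the convexity inequality \eqref{wline} extends from two atoms to a general convex combination by straightforward induction.
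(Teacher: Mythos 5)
Your proposal is correct and matches the paper's own argument essentially step for step: decompose $m = kr + l$ into $r$ blocks of size $k$ plus the leftover, iterate \eqref{wline}, use exchangeability to replace each block empirical's expected $\cW_2^2$-distance by $\E[\cW_2^2(\mu_k,\mu)]$, and bound the leftover by the crude product coupling. The only difference is presentational — you package the inner $r$-fold average as one invocation of convexity while the paper unrolls it recursively — but the inequalities and the use of exchangeability are identical.
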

\begin{preuve}.
For $k\in{\{1,...,m\}}$,  we observe that 
\[\mu_m=\frac{kr}{m}\mu_{kr}+\frac{l}{m}\cdot\frac{1}{l}\sum_{i=kr+1}^{m}\delta_{W_i}.\]First, by (\ref{wline}), we have 
\begin{align*}
\cW_2^2\Big(\mu_m, \mu\Big)\le \frac{kr}{m}\cW_2^2\Big(\mu_{kr}, \mu\Big)+\frac{l}{m}\cW_2^2\Big(\frac{1}{l}\sum_{i=kr+1}^{m}\delta_{W_i}, \mu\Big).
\end{align*}
Applying again  (\ref{wline}), we have 
\begin{align*}
\cW_2^2\Big(\frac{1}{kr}\sum_{i=1}^{kr}\delta_{W_i}, \mu\Big)\le \frac{1}{r}\cW_2^2\Big(\frac{1}{k}\sum_{i=1}^{k}\delta_{W_i}, \mu\Big)+ \frac{r-1}{r}\cW_2^2\Big(\frac{1}{k(r-1)}\sum_{i=k+1}^{kr}\delta_{W_i}, \mu\Big),
\end{align*}
 since $\frac{1}{kr}\sum_{i=1}^{kr}\delta_{W_i}=\frac{1}{r}\cdot \frac{1}{k}\sum_{i=1}^{k}\delta_{W_i}+(1-\frac{1}{r})\cdot\frac{1}{k(r-1)}\sum_{i=k+1}^{kr}\delta_{W_i}$.
Similarly, we finally have 
\[\cW_2^2\Big(\mu_m, \mu\Big)\le \frac{k}{m}\sum_{u=1}^r\cW_2^2\Big(\frac{1}{k}\sum_{i=k(u-1)+1}^{ku}\delta_{W_i}, \mu\Big)+\frac{l}{m}\cW_2^2\Big(\frac{1}{l}\sum_{i=kr+1}^{m}\delta_{W_i}, \mu\Big).\]
Using the exchangeability, we thus find that 
\[\E\left[\cW_2^2(\mu_m, \mu)\right]\le\frac{kr}{m}\E[\cW_2^2(\mu_k, \mu)]+\frac{l}{m}\E\left[\cW_2^2\Big(\frac{1}{l}\sum_{i=kr+1}^{m}\delta_{W_i}, \mu\Big)\right]. \]
This completes the proof since $l\le k-1$ and $\frac{kr}{m}\le 1$ and because 
$\E\left[\cW_2^2\Big(\frac{1}{l}\sum_{kr+1}^{m}\delta_{W_i}, \mu\Big)\right]\le 2m_2(\mu)+2\E[|W_1|^2].$

\end{preuve} 
We now move to verify Proposition \ref{thep}.
\begin{preuve} {\it of Proposition \ref{thep}.} 
Set $m=N-1$, $W_i=W_t^{i,K}$ for $i=1,...,N-1$ and $\mu=f_t^K$ in Lemma \ref{line} and given $k\le N-1$. We  recall $\e^{K}_t(f_t^K)= \E \left[\cW_2^2\left(f_t^K,\frac{1}{N-1}\sum_{i=2}^N \delta_{W^{i,K}_t} \right)\right]$ and  get by exchangeability and Lemma \ref{thele},
\begin{align*}
    \e^{K}_t(f_t^K)\le& \E\left[\cW_2^2\left(\frac{1}{k}\sum_{i=1}^{k}\delta_{W_t^{i,K}},f_t^K\right)\right]
    +\frac{k}{N-1}\Big(4m_2(f_t^K)\Big)\\
    \le & 2\E[|W^{1,K}_t-\tW^{1,K}_t|^2]+2\E\left[\cW_2^2\left(\frac{1}{k}\sum_{i=1}^{k}\delta_{\tW_t^{i,K}},f_t^K\right)\right]+8\frac{k}{N}.
\end{align*}
By \eqref{momex} in Theorem \ref{wp}, we know that $f_t^K\in\cP_p(\rd) $ with $p>4$, hence by Theorem 1 in \cite{MR3383341}, we have 
\[\E\left[\cW_2^2\left(\frac{1}{k}\sum_{i=1}^{k}\delta_{\tW_t^{i,K}},f_t^K\right)\right]\le Ck^{-1/2}.\] Using Lemma \ref{thele}, we finally conclude that 
\[ \e^{K}_t(f_t^K)\le C_{\e,T} \Big(\frac{k}{N}\Big)^{1-\varepsilon}+\frac{C}{\sqrt k}+8\frac{k}{N}.\]
Moreover, the result (\ref{spe}) yields by choosing $k=\lfloor N^{\frac{2}{3}}\rfloor$ for any $0<\varepsilon<1$.
\end{preuve}
Finally, we prove the main result.
\subsection{Proof of Theorem \ref{mr}}
From Proposition \ref{mrl} and  \ref{thep}, we already know for $0<\gamma<1,$ for any $0<\varepsilon<1/2,$ there exists $C_\varepsilon>0$ such that
\begin{align*}
   \sup_{[0,T]}\E[\cW_2^2(\mu^{N}_t,f_t)] \leq& C_{\e,T} \left(\sup_{[0,T]}\e^K_t(f_t^K)
+ K^{1-2/\nu} \right)^{1-\e}+\frac{C}{N}\\
&\le C_{\e,T} \left(\Big(N^{-1/3}\Big)^{1-\e}
+ K^{1-2/\nu} \right)^{1-\e}+\frac{C}{N}\\
&\le C_{\e,T} \left(N^{-1/3}
+ K^{1-2/\nu} \right)^{1-2\e},
\end{align*}
Letting $K$ go  to infinity, which finishs the proof of point (i).
Point (ii) is for $\gamma=1$,  the proof is exactly the same. Combining Proposition \ref{mrl} with \ref{thep}, we have 
\begin{align*}\label{fc51}
\sup_{[0,T]}\E[\cW_2^2(\mu^{N}_t,f_t)] \leq& C_{\e,T} \left( K^{2\e} (N^{-1/3})^{(1-\e)^2}
+ K^{1-2/\nu} \right)+\frac{C}{N} .
\end{align*}
By choosing $K=N^{(2/\nu-1)/3}$, we have for any $\e\in(0,\nu/4),$ 
\begin{align*} 
\sup_{[0,T]}\E[\cW_2^2(\mu^N_t,f_t)] &\leq C_{\e ,T}(N^{-1/3})^{(1-\e)^2}N^{(4/\nu-2)\e/3}\\
&=  C_{\e ,T}(N^{-1/3})^{1-4\e/\nu+\e^2}\\
&\le  C_{\e ,T}(N^{-1/3})^{1-4\e/\nu}.
\end{align*}
We conclude the result.

\section*{Acknowledgements}
\quad~~
We would like to thank greatly Nicolas Fournier for bringing us to this interesting problem and some helpful discussions. Liping Xu is partly supported by National Natural Science Foundation of China (12101028).

\bibliographystyle{abbrv} 
\bibliography{kac}

\end{document}